\date{}
\newtheorem{theorem}{Theorem}[section]
\newtheorem{conjecture}{Conjecture}
\newtheorem{corollary}[theorem]{Corollary}
\newtheorem{remark}[theorem]{Remark}
\newtheorem{lemma}[theorem]{Lemma}
\newtheorem{claim}[theorem]{Claim}
\newtheorem{problem}[theorem]{Problem}
\newenvironment{claimproof}[1]{{\it\noindent{Proof.}}\space#1}{\footnotesize \hfill \ensuremath{(\square)} \medskip}
\tikzstyle{vertex}=[circle, draw, inner sep=0pt, minimum size=6pt]
\newcommand{\es}{es_{\chi'}}
\begin{document}

\title{On the chromatic edge stability index of graphs}

\author{Saieed Akbari$^{a}$\thanks{Email: \texttt{s\textunderscore akbari@sharif.edu}}\and Arash Beikmohammadi$^{b}$\thanks{Email: \texttt{arash.beikmohammadi@gmail.com}}\and Bo\v stjan Bre\v sar$^{c,d}$\thanks{Email: \texttt{bostjan.bresar@um.si}}\and Tanja Dravec$^{c,d}$\thanks{Email: \texttt{tanja.dravec@um.si}}\and Mohammad Mahdi Habibollahi$^{b}$\thanks{Email: \texttt {m.habiballahi@gmail.com}}\and Nazanin Movarraei$^{e}$\thanks{Email: \texttt{nazanin.movarraei@gmail.com}}\smallskip }

\maketitle

\begin{center}
$^a$ Department of Mathematical Sciences, Sharif University of Technology, Tehran, Iran\\
\medskip

$^b$ Department of Computer Engineering, Sharif University of Technology, Tehran, Iran\\
\medskip

$^c$ Faculty of Natural Sciences and Mathematics, University of Maribor, Slovenia\\
\medskip

$^d$ Institute of Mathematics, Physics and Mechanics, Ljubljana, Slovenia\\
\medskip

$^e$Department of Mathematics, Yazd University, Yazd, Iran\\ 
\end{center}

\medskip

\begin{abstract}
Given a non-trivial graph $G$, the minimum cardinality of a set of edges $F$ in $G$ such that $\chi'(G \setminus F)<\chi'(G)$ is called the chromatic edge stability index of $G$, denoted by $\es(G)$, and such a (smallest) set $F$ is called a (minimum) mitigating set. While $1\le \es(G)\le \lfloor n/2\rfloor$ holds for any graph $G$, we investigate the graphs with extremal and near-extremal values of $\es(G)$. The graphs $G$ with $\es(G)=\lfloor n/2\rfloor$ are classified, and the graphs $G$ with $\es(G)=\lfloor n/2\rfloor-1$ and $\chi'(G)=\Delta(G)+1$ are characterized. We establish that the odd cycles and $K_2$ are exactly the regular connected graphs with the chromatic edge stability index $1$; on the other hand, we prove that it is NP-hard to verify whether a graph $G$ has $\es(G)=1$. We also prove that every minimum mitigating set of an $r$-regular graph $G$, where $r\ne 4$, with $\es(G)=2$ is a matching. Furthermore, we propose a conjecture that for every graph $G$ there exists a minimum mitigating set, which is a matching, and prove that the conjecture holds for graphs $G$ with $\es(G)\in\{1,2,\lfloor n/2\rfloor-1,\lfloor n/2\rfloor\}$, and for bipartite graphs. 

\end{abstract}

\noindent
{\bf Keywords:} edge coloring, matching, chromatic index, chromatic edge stability \\

\noindent
{\bf AMS Subj.\ Class.\ (2010)}: 05C15, 05C70.

\section{Introduction}
Throughout this paper all graphs are finite and simple, that is, with no loops and multiple edges, and moreover, with at least one edge.
Given a graph $G$, a function $c:E(G) \to \{c_1,\ldots , c_k\}$ with $c(e) \neq c(f)$ for any two adjacent edges $e$ and $f$ is a  {\emph{proper $k$-edge coloring}} of $G$. The minimum $k$ for which $G$ admits a proper $k$-edge coloring is the {\emph{chromatic index}} of $G$, and denoted by $\chi'(G)$.
We let $[k]=\{1,\ldots,k\}$.
For any $i \in [\chi'(G)]$, let $C_i$ denote the set of all edges of $G$ that are colored by $c_i$ in the proper edge coloring $c$. 
For any $v \in V(G)$, let $c(v)$ denote the set of colors appearing in $v$.
The \emph{open neighborhood} of a vertex $v$ in $G$ is the set of neighbors of $v$, denoted by $N_G(v)$, whereas the \emph{closed neighborhood} of $v$ is $N_G[v] = N_G(v) \cup \{v\}$. The \emph{degree} of a vertex $v$ in $G$ is denoted by $d_G(v) = |N_G(v)|$. A $k$-\emph{regular graph} is a graph in which every vertex has degree~$k$. A graph $G$ is \emph{regular} if it is $k$-regular for some integer $k \ge 0$.
The subgraph of $G$ induced by $A \subseteq V(G)$ will be denoted by $G \left[ A \right]$. The {\it{Core(G)}} is the subgraph of $G$ induced by all vertices of maximum degree $\Delta(G)$. In this paper $\overline G$ denotes the {\em complement} of $G$, that is, $V(\overline G)=V(G)$ and $E(\overline G)=\overline{E(G)}$. 
The complete graph of order $n$ is denoted by $K_n$. The complete bipartite graph with part sizes $m$ and $n$ is denoted by $K_{m,n}$.

One of the most celebrated results in graph theory due to Vizing~\cite{viz-1964} states that the chromatic index of an arbitrary simple graph lies between the maximum degree $\Delta(G)$ and $\Delta(G)+1$. Graphs with $\chi'(G)=\Delta(G)$ are said to be of {\em Class $1$}, while graphs with $\chi'(G)=\Delta(G)+1$ are said to be of {\em Class $2$}. Holyer proved that determining whether a graph is of Class 1 (or 2) is in general an NP-complete problem~\cite{hol-1981}, which in part explains the large number of investigations of the chromatic index and related properties; see a recent survey on edge colorings~\cite{ccjst-2019}.

Vizing in a follow-up~\cite{viz-1965} investigated the effect of edge removal in a graph of Class 2, and called such a graph {\em critical} if after removing any edge the chromatic index drops. It is easy to see that the only critical Class 1 graphs are the stars. It is also easy to see that every Class 2 graph $G$ contains a subgraph $H$ such that $H$ is a critical Class 2 graph and $\Delta(H)=\Delta(G)$. Vizing later proposed two conjectures for critical Class 2 graphs, notably, in~\cite{viz-1965b} he conjectured that every such graph admits a $2$-factor, and in~\cite{viz-1968} that the independence number of a critical graph is at most half of its order. In spite of many attempts and partial results, the conjectures are still unresolved; see a recent study~\cite{ks-2019} and the references therein. 

In this paper, we propose a different perspective on the study of the effect of edge removal with respect to the chromatic index. A similar study with respect to the chromatic number was initiated by Staton~\cite{stat-1980} in 1980, and received a considerable attention in recent years~\cite{akmn-2019+, bkm-2020,kmn-2018}. The question considered in these papers is, how many edges need to be removed from a graph so that its chromatic number drops. To the best of our knowledge, an analogous question for the chromatic index has not yet been studied. (The only exception is a very recent paper~\cite{km-2020+} in which the concept was mentioned in a more general context, and some initial results were obtained.)
We find this surprising and want to initiate the investigation of this problem, which we formally define as follows.

The {\emph{chromatic edge stability index}}, $es_{\chi'}(G)$, of a graph $G$ is the minimum number of edges of $G$ such that their deletion results in a graph $H$ with $\chi'(H)=\chi'(G)-1$. For a set $F \subseteq E(G)$ we denote by $G \setminus F$ the graph $G$ without the edges of the set $F$. If $F=\{e\}$, where $e\in E(G)$, we simply write $G \setminus e$ instead of $G \setminus \{e\}$.  
A {\emph{mitigating set}} is a set of edges $F$ in $G$ such that $\chi'(G \setminus F) < \chi'(G)$.

A {\em matching} $M$ is a set of edges in $G$ such that every two distinct edges $e$ and $f$ in $M$ are not adjacent. We say that a matching $M$ with $|M|=k$ is a {\em $k$-matching}. A {\emph{perfect matching} of $G$ is a matching $M$ of $G$ that covers all vertices of $G$.} The cardinality of a maximum matching of a graph $G$ is the {\emph{matching number}} of $G$ and denoted by $\alpha'(G)$.

Let $c:E(G) \to \{c_1,\ldots , c_{\chi'(G)}\}$ be a proper $\chi'(G)$-edge coloring of $G$. Suppose that 
$C$ is a color class. Then, obviously $\chi'(G \setminus C) < \chi'(G)$, and since $C$ is a matching, we infer $|C|\le \left\lfloor \frac{|V(G)|}{2} \right\rfloor$. This yields the upper bound in the following basic result (the lower bound is trivial).

\begin{lemma}\label{l:bound}
If $G$ is a graph of order $n$, then $1\le \es(G) \leq \lfloor \frac{n}{2} \rfloor$. 
\end{lemma}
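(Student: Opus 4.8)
The plan is to exhibit an explicit mitigating set of size at most $\lfloor n/2\rfloor$ for the upper bound, and to observe that the empty edge set is never mitigating for the lower bound. No heavy machinery is needed; the whole argument rests on the definition of a proper edge coloring, and indeed the core of it is already present in the paragraph preceding the statement.

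For the lower bound I would argue as follows. Since $G$ has at least one edge, $\chi'(G)\ge 1$. Taking $F=\emptyset$ gives $G\setminus F=G$, so $\chi'(G\setminus F)=\chi'(G)$, which is not strictly smaller than $\chi'(G)$; hence every mitigating set is nonempty, and therefore $\es(G)\ge 1$.

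For the upper bound, fix a proper $\chi'(G)$-edge coloring $c\colon E(G)\to\{c_1,\dots,c_{\chi'(G)}\}$ with color classes $C_1,\dots,C_{\chi'(G)}$. By minimality of $\chi'(G)$ none of these classes is empty (otherwise $c$ would use fewer than $\chi'(G)$ colors), and each $C_i$ is a matching because no two edges receiving the same color are adjacent. Pick any color class $C$. Restricting $c$ to $G\setminus C$ gives a proper edge coloring of $G\setminus C$ using at most $\chi'(G)-1$ colors, so $\chi'(G\setminus C)\le\chi'(G)-1<\chi'(G)$; thus $C$ is a mitigating set. Since $C$ is a matching in a graph on $n$ vertices, $|C|\le\lfloor n/2\rfloor$, and therefore $\es(G)\le|C|\le\lfloor n/2\rfloor$.

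There is no real obstacle here; the only points worth stating carefully are that the empty edge set fails to be mitigating, which yields $\es(G)\ge 1$, and that a color class, being a matching, covers at most $n$ vertices and hence has at most $\lfloor n/2\rfloor$ edges. In particular, Vizing's theorem plays no role in this proof, and the bound is attained by any single color class of an optimal edge coloring.
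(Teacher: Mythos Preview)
Your proof is correct and follows exactly the same approach as the paper: the lower bound is declared trivial there (since any mitigating set is nonempty), and the upper bound is obtained by deleting a color class of an optimal edge coloring, which, being a matching on $n$ vertices, has size at most $\lfloor n/2\rfloor$. The only minor addition in your write-up is the explicit remark that no color class in an optimal coloring is empty, which the paper leaves implicit.
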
  

The following two questions will be studied in this paper. What are the graphs that achieve the extremal or near extremal values of the chromatic edge stability index in view of the inequalities in Lemma~\ref{l:bound}? The second question is concerned with the structure of minimum mitigating sets. Notably, it is not clear if there is always such a proper edge coloring of $G$ for which there is a minimum mitigating set obtained in this way (by deletion of edges of a smallest color class). We suspect the answer is affirmative, and propose this as the following conjecture. 

\begin{conjecture}\label{conj1}
For every graph $G$ there exists a minimum mitigating set which is a matching. 
\end{conjecture}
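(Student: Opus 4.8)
The plan is to argue by a local exchange (recolouring) argument on minimum mitigating sets. Write $k=\chi'(G)$ and $m=\es(G)$, and among all minimum mitigating sets of $G$ choose one, $F$, minimizing the potential $\Phi(F)=\sum_{v\in V(G)}\binom{d_F(v)}{2}$, where $d_F(v)$ denotes the number of edges of $F$ incident with $v$; thus $\Phi(F)$ counts the pairs of adjacent edges inside $F$. If $\Phi(F)=0$ then $F$ is a matching and we are done, so assume $\Phi(F)>0$ and fix two adjacent edges $e=uv$ and $f=vw$ of $F$. The entire argument aims to produce a minimum mitigating set $F'$ with $|F'|=m$ and $\Phi(F')<\Phi(F)$, contradicting the choice of $F$ and forcing $F$ to be a matching.

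First I would record the colouring constraint that every minimum mitigating set imposes. Fix a proper $(k-1)$-edge colouring $c'$ of $G\setminus F$. For each edge $xy\in F$, minimality of $|F|$ forces that no colour of $[k-1]$ is simultaneously missing at $x$ and at $y$ in $c'$: otherwise $xy$ could be given that colour, so $F\setminus\{xy\}$ would already be mitigating with $m-1$ edges, a contradiction. Equivalently, $c'(x)\cup c'(y)=[k-1]$ for every $xy\in F$. This ``saturation'' property is the engine driving all the exchanges below, and it is exactly the statement that each edge of $F$ behaves like a Vizing-critical edge with respect to $c'$.

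Next comes the core exchange step. With $e=uv$ and $f=vw$ in $F$, I would re-add $f$ to $G\setminus F$ and delete, in its place, an edge $f'$ not incident with $v$, so that $F'=(F\setminus\{f\})\cup\{f'\}$ still admits a proper $(k-1)$-colouring while $d_{F'}(v)=d_F(v)-1$, which lowers $\Phi$. Since $c'(v)\cup c'(w)=[k-1]$, there is a colour $\alpha$ present at $v$ and a colour $\beta$ present at $w$ with $\alpha\notin c'(w)$; I would then trace the $(\alpha,\beta)$-Kempe chain (the maximal alternating path in the colours $\alpha,\beta$ of $c'$) starting at $w$. Swapping colours along this chain frees a colour at $w$, after which $f$ can be coloured and the edge that previously carried the freed colour becomes deletable as $f'$; choosing $\alpha$ so that the chain terminates at a vertex other than $v$ keeps $f'$ non-adjacent to $v$. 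One must then also check that this recolouring does not break the saturation property at the endpoints of the \emph{remaining} edges of $F$, and it is here that the minimality $|F|=m$ together with a careful choice of $\alpha,\beta$ is used.

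The hard part, and the reason the statement is only a conjecture, is controlling the Kempe chain globally. Exactly as in the proof of Vizing's theorem, the $(\alpha,\beta)$-chain from $w$ may close into an odd cycle or terminate at $v$ itself, producing the obstruction that blocks the intended swap; moreover several edges of $F$ may be concentrated around a small set of maximum-degree vertices (the $Core(G)$), so that a single chain interacts with many of them simultaneously and a local exchange repairing the pair $e,f$ may create a new adjacency or destroy saturation elsewhere. Taming these simultaneous interactions for an arbitrary $G$ is the genuine obstacle. When the colouring is rigid enough that such interaction cannot occur --- for instance for bipartite graphs, where Kempe chains never close into odd cycles, or when $\es(G)$ is forced to one of its extreme values so that the shape of any minimum mitigating set is already determined --- the exchange goes through and the conjecture follows, which is exactly the scope of the special cases established in this paper.
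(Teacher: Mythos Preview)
The statement you are addressing is Conjecture~\ref{conj1}, which the paper does \emph{not} prove in general: it is left open, and the paper only verifies it for bipartite graphs and for graphs with $\es(G)\in\{1,2,\lfloor n/2\rfloor-1,\lfloor n/2\rfloor\}$. Your proposal is likewise not a proof, as you yourself concede in the final paragraph, so there is no complete argument on either side to compare. Your potential-function framing and the saturation observation $c'(x)\cup c'(y)=[k-1]$ for every $xy\in F$ are correct and do match the spirit of the paper's special-case arguments (alternating paths in Section~\ref{sec:bip}, Kempe swaps in Section~\ref{sec:2}).

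There is, however, a further gap in your exchange step that precedes the global Kempe-chain obstruction you flag. You replace $f=vw$ by some $f'$ not incident with $v$ and assert $\Phi(F')<\Phi(F)$ because $d_{F'}(v)=d_F(v)-1$. But if $f'=ww'$, then $d_{F'}(w')=d_F(w')+1$, and nothing in your argument bounds $d_F(w')$: if $w'$ already meets edges of $F$ the potential need not drop (indeed $\Phi(F')-\Phi(F)=d_F(w')-(d_F(v)-1)$ in the simplest swap). In the paper's proof of the case $\es(G)=2$ (Theorem~\ref{th:es2} via Lemma~\ref{l:mitigating2}) this is exactly why a further case analysis on where the $(c_1,c_2)$-path terminates is required, and even with only two edges in $F$ the argument splits into several subcases. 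Your outline skips this bookkeeping at the \emph{new} endpoint of the exchanged edge, so the local step is not justified even before the global obstruction enters.
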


In Section~\ref{sec0} we establish the notation, present several preliminary results from the literature, and study the chromatic edge stability index in some simple graph families.  In Section~\ref{sec:es1}, we consider graphs with the extreme values ($1$ and $\lfloor \frac{n}{2} \rfloor$, respectively) of the chromatic edge stability index. We prove that a graph $G$ attains the upper bound if and only if $G$ is either a complete graph of odd order or a Class 1 regular graph of even order or a Class 1 graph of odd order with $\frac{(n-1)\Delta(G)}{2}$ edges. We also characterize the connected regular graphs $G$ with $\es(G)=1$. We prove that the problems of determining whether $\es(G)=\lfloor \frac{n}{2} \rfloor$ or $\es(G)=1$ are NP-hard. Then, in Section~\ref{sec:2}, graphs $G$ with $\es(G)=2$ are investigated, and we prove that every such graph has a minimum mitigating set consisting of two non-adjacent edges. In addition, we prove that in $r$-regular graphs $G$ with $\es(G)=2$, where $r\ne 4$, the only minimum mitigating sets are matchings.
In Section~\ref{sec:5}, Conjecture~\ref{conj1} is confirmed for graphs $G$ with 
$es_{\chi'}(G)= \lfloor \frac{n}{2} \rfloor-1$. Furthermore, we characterize graphs $G$ of Class 2 for which $\es(G)=\lfloor \frac{n}{2}\rfloor -1$.
In Section~\ref{sec:bip} we consider bipartite graphs and prove that Conjecture~\ref{conj1} holds for any bipartite graph. In the final section, we give concluding remarks and propose some open problems.

\section{Preliminary results}\label{sec0}

In this section, we establish the notation, and present the chromatic edge stability index in some simple families of graphs, such as cycles, paths, complete graphs and complete bipartite graphs. We also present several useful preliminary results from the literature concerning edge colorings.

The following observation is clear and was used also in the arguments for Lemma~\ref{l:bound}. 
\begin{remark}\label{r:C1}
If $C_1$ is a smallest color class of a proper $\chi'(G)$-edge coloring $c$ of $G$, then $\es(G) \leq |C_1|$.
\end{remark}

Consider a proper $\chi'(G)$-edge coloring $c$ of $G$ and let $c_{1}$ and $c_{2}$ be two distinct colors of $c$. A  {\em $(c_{1},c_{2})$-path} in $G$ is a maximal path of length at least one whose edges have colors $c_{1}$ and $c_{2}$, which alternate along the path. Furthermore, exactly one of the colors of $c_1$ and $c_2$ appears in the first vertex of this path (note that this maximal path does not terminate with the starting vertex). If $u\in V(G)$, $c_{1} \in c(u)$ and $c_{2} \notin c(u)$, then $P_{u}(c_{1},c_{2})$ denotes $(c_{1},c_{2})$-path, starting at $u$. 

\begin{remark}\label{r:path}
Let $c$ be a proper $\chi'(G)$-edge coloring of $G$ and suppose that there exists a path $P_{u}(c_{1},c_{2})$. If we switch the colors $c_{1}$ and $c_{2}$ in $P_{u}(c_{1},c_{2})$ without any change of the colors of the rest of the edges of $G$, this is also a proper $\chi'(G)$-edge coloring of $G$.
\end{remark}

Let us recall Vizing's fundamental theorem on the chromatic index of a simple graph.

\begin{theorem}{\rm \cite{viz-1964}}
\label{thm:vising}
If $G$ is a graph, then $\Delta(G)\leq\chi'(G)\leq\Delta(G)+1$.
\end{theorem}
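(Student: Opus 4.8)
The lower bound is immediate: if $v$ is a vertex with $d_G(v)=\Delta(G)$, then the $\Delta(G)$ edges incident with $v$ are pairwise adjacent and hence receive pairwise distinct colors in every proper edge coloring, so $\chi'(G)\ge\Delta(G)$. The content of the theorem is the upper bound, which I would prove by induction on $|E(G)|$. Write $k=\Delta(G)+1$ and fix an edge $e=xy_1$. By the inductive hypothesis $G\setminus e$ has a proper $k$-edge coloring $c$, and since $d_{G\setminus e}(v)\le\Delta(G)<k$ for every vertex $v$, at each vertex there is a \emph{free} color, that is, a color not appearing on its incident edges. It suffices to extend $c$ to a proper $k$-edge coloring of $G$.

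I would do this by constructing a \emph{Vizing fan} at $x$: a maximal sequence $y_1,y_2,\dots,y_s$ of distinct neighbors of $x$ such that $c(xy_{i+1})$ is free at $y_i$ for $1\le i<s$, where $xy_1=e$ is regarded as the currently uncolored edge. The key move is a \emph{rotation from $y_i$}, which reassigns $c(xy_j):=c(xy_{j+1})$ for $j=1,\dots,i-1$ and uncolors $xy_i$; it keeps the coloring proper, does not change the set of colors present at $x$, and moves the uncolored edge from $xy_1$ to $xy_i$. The aim is to reach a state in which some color is free both at $x$ and at the endpoint $y_i$ of the uncolored fan edge, since then that color can be assigned to $xy_i$ and all of $G$ is colored. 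Let $\alpha$ be a free color at $x$ and $\beta$ a free color at $y_s$. If $\beta$ is free at $x$ (or, symmetrically, $\alpha$ is free at $y_s$), rotating from $y_s$ already achieves the aim. Otherwise, maximality of the fan forces $\beta=c(xy_j)$ for some $j$ with $2\le j<s$, and then $\beta$ is free at $y_{j-1}$ by the defining property of the fan. Now invoke a Kempe chain in the sense of Remark~\ref{r:path}: let $P$ be the maximal path alternating in $\alpha$ and $\beta$ that starts at $x$; its first edge, being colored $\beta$, is $xy_j$, and $P$ is a genuine path with $x$ as one endpoint. Swapping $\alpha$ and $\beta$ along $P$ stays proper, and there is a dichotomy on the other endpoint of $P$. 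If $y_{j-1}\notin P$, then after the swap $\beta$ is free at both $x$ and $y_{j-1}$, so rotating from $y_{j-1}$ achieves the aim. If $y_{j-1}\in P$, then $y_{j-1}$ is the other endpoint of $P$, so $y_s\notin P$; after the swap $\beta$ is still free at $y_s$ while $\alpha$ has become free at $y_{j-1}$, so the entire fan remains legal, and rotating from $y_s$ achieves the aim.

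The routine content is checking that each elementary move preserves properness --- a rotation places onto every $xy_j$ a color that was free at $y_j$, and a swap along a maximal two-colored path is automatically proper --- together with the observation that, after a swap, the (possibly truncated) fan remains legal because the colors $c(xy_i)$ are all distinct from $\alpha$ and $\beta$ except for $c(xy_j)=\beta$, so the swap does not alter the free/used status of any fan color at any vertex. The real obstacle is the bookkeeping in the hard case: one must argue that maximality of the fan genuinely forces $\beta=c(xy_j)$ for a fan index $j$, that $y_s$ and $y_{j-1}$ cannot both lie on the path $P$, and --- in the subcase $y_{j-1}\in P$ --- that the swap turns the $\alpha$-edge of $P$ at $y_{j-1}$ into a $\beta$-edge and thereby frees $\alpha$ at $y_{j-1}$, which is exactly what makes the rotation from $y_s$ legal afterwards. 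Making these invariants precise, in particular that the notion of a free color is well behaved under rotations and swaps, is the crux; once they are pinned down, the conclusion follows in a few lines.
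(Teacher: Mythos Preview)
The paper does not supply its own proof of Theorem~\ref{thm:vising}; it merely quotes Vizing's theorem from~\cite{viz-1964} as a classical result and uses it throughout. There is therefore nothing to compare your argument against in the paper itself.

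That said, your proposal is the standard Vizing fan argument and is correct. The key steps you identify are exactly the right ones: maximality of the fan forces the free color $\beta$ at $y_s$ to equal $c(xy_j)$ for some $2\le j<s$ (since otherwise the neighbor carrying $\beta$ at $x$ could be appended to the fan); the $\alpha$--$\beta$ Kempe path $P$ from $x$ begins with the unique $\beta$-edge $xy_j$; and the dichotomy on whether $y_{j-1}$ lies on $P$ is handled correctly, using that $y_{j-1}$ and $y_s$ cannot both be non-$x$ endpoints of $P$ (each is missing $\beta$, so each can only be an endpoint). Your observation that the fan colors $c(xy_i)$ for $i\ne j$ lie outside $\{\alpha,\beta\}$ is what makes the rotation legal after the swap, and in the subcase $y_{j-1}\in P$ the last edge of $P$ at $y_{j-1}$ is indeed an $\alpha$-edge (since $\beta$ is free there), so the swap frees $\alpha$ at $y_{j-1}$ and the full rotation to $y_s$ goes through. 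The write-up is terse but the invariants you flag are the right ones, and nothing essential is missing.
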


By the above result, the family of all graphs can be partitioned into two classes: a graph $G$ is of {\em Class 1} if $\chi'(G)=\Delta(G)$, and otherwise, $G$ is of {\em Class 2}. It is well known that bipartite graphs are of Class 1, due to K\H{o}nig's theorem~\cite{kon-1916} from 1916.

\begin{theorem}{\rm \cite{kon-1916}}
\label{thm:konig}
If $G$ is a bipartite graph, then $\chi'(G)=\Delta(G)$.
\end{theorem}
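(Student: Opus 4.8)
The plan is to establish the non-trivial inequality $\chi'(G)\le\Delta(G)$, the reverse inequality being immediate (and in any case contained in Theorem~\ref{thm:vising}). I would argue by induction on the number of edges, the one-edge case being trivial. For the inductive step, delete an edge $e=uv$; since $\Delta(G\setminus e)\le\Delta(G)$, the inductive hypothesis furnishes a proper edge coloring $c$ of $G\setminus e$ using the palette $[\Delta(G)]$. Each of $u$ and $v$ is incident with at most $\Delta(G)-1$ edges of $G\setminus e$, so some color $\alpha$ is missing at $u$ and some color $\beta$ is missing at $v$. If $\alpha=\beta$, colour $e$ with that color and stop. So assume $\alpha\ne\beta$; then $\beta$ is present at $u$ and $\alpha$ is present at $v$.

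The crux is a Kempe-chain argument in which bipartiteness is used exactly once. Let $P$ be the maximal path starting at $u$ whose edges alternate between the colors $\beta$ and $\alpha$, its first edge being the unique $\beta$-colored edge at $u$. I claim $v\notin V(P)$. Every internal vertex of $P$ is incident with an edge of each of the two colors, so $v$, which is incident with no $\beta$-colored edge, cannot be internal to $P$; and if $v$ were the far endpoint of $P$, then the first edge of $P$ is $\beta$-colored while the last must be $\alpha$-colored (as $v$ is incident with an $\alpha$-edge but no $\beta$-edge and $P$ is maximal), so $P$ would be a $u$--$v$ path of even length, placing $u$ and $v$ in the same part of the bipartition and contradicting $uv\in E(G)$. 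Having shown $v\notin V(P)$, switch the colors $\alpha$ and $\beta$ along $P$, exactly as in Remark~\ref{r:path}; the result is again a proper $\Delta(G)$-edge coloring of $G\setminus e$ in which $\beta$ is now missing at $u$ (its unique $\beta$-edge, the first edge of $P$, became $\alpha$-colored, and $u$ had no $\alpha$-edge before) and remains missing at $v$ (whose incident colors were untouched). Colour $e$ with $\beta$; this completes the induction.

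The only real obstacle is the claim $v\notin V(P)$, which is precisely where bipartiteness is indispensable — the statement fails in general, for instance for odd cycles, whose Kempe chains can join the two endpoints of the missing edge. The parity count on the length of an alternating $u$--$v$ path, combined with the fact that adjacent vertices lie in opposite parts, is what makes the colour swap safe here. If one wished to avoid Kempe chains entirely, an alternative is to first embed $G$ into a $\Delta(G)$-regular bipartite graph by iteratively pairing up and joining vertices of smaller degree (which preserves bipartiteness), and then to peel off perfect matchings one at a time via Hall's theorem, each matching becoming a colour class; I would nonetheless prefer the inductive argument above, since it reuses the alternating-path machinery already set up in Remark~\ref{r:path}.
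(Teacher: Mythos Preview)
Your argument is correct: the induction on $|E(G)|$ with a Kempe-chain swap is the standard proof of K\H{o}nig's theorem, and your parity analysis showing $v\notin V(P)$ is sound. However, the paper does not actually prove this statement --- it is stated as Theorem~\ref{thm:konig} with a citation to~\cite{kon-1916} and no proof is given, since it is a classical result used only as a tool. So there is nothing to compare against; your proposal simply supplies a (valid) proof where the paper is content to cite the literature.
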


\begin{lemma} {\rm \cite[p.96]{bondy-1982}}
\label{Bondy}
Let $G$ be a graph of odd order $n$ and $|E(G)|> \frac {(n-1) \Delta(G)}{2}$, then $G$ is of Class $2$.
\end{lemma}

Concerning the computational complexity of the chromatic edge stability index problem, we will make use of the following result due to Holyer. 

\begin{theorem}{\rm \cite{hol-1981}}
\label{thm:holyer}
It is NP-complete to determine whether the chromatic index of a $3$-regular graph is $3$ or $4$.
\end{theorem}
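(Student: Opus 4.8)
The statement is Holyer's theorem, so the plan is to reconstruct a gadget reduction; I would organize it into a short membership argument and a reduction from a $3$-SAT-type problem. Membership in NP is immediate: for a cubic graph Vizing's theorem (Theorem~\ref{thm:vising}) gives $\chi'(G)\in\{3,4\}$, so the question ``is $\chi'(G)=3$?'' is well posed, and a proper $3$-edge-coloring is a polynomial-size certificate that is checkable in polynomial time. For hardness I would reduce from $3$-CNF-SAT (NP-complete by Cook--Levin), associating with a formula $\varphi$, in polynomial time, a cubic graph $G_\varphi$ such that $\chi'(G_\varphi)=3$ if and only if $\varphi$ is satisfiable.

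The whole construction rests on one elementary fact about proper $3$-edge-colorings of cubic graphs: each of the three color classes is a perfect matching, so for every vertex set $X$ and every color $i$ one has $|\partial X\cap C_i|\equiv|X|\pmod 2$, where $\partial X$ denotes the edge cut of $X$. Two consequences are the workhorses of the reduction: a $2$-edge cut is always monochromatic (its two edges get the same color) and a $3$-edge cut is always rainbow. Hence a ``wire'' --- a thin subgraph joined to the rest of the graph along a sequence of $2$-edge cuts --- transports a single color unchanged from one end to the other, and I would encode a Boolean value by a pair of parallel wires, declaring the value \emph{true} when the two wires carry equal colors and \emph{false} when they carry distinct colors (an inverter is then a small cubic block that is forced to swap this pattern).

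With this in hand I would build three kinds of cubic gadgets. A \emph{variable gadget} for each variable $x$ is a small cubic block with many output wire-pairs such that in every proper $3$-edge-coloring all outputs carry a common Boolean value, and both values are realizable; this yields arbitrarily many consistent copies of the truth value of $x$, negated where needed by passing through an inverter. A \emph{clause gadget} for each clause $\ell_1\vee\ell_2\vee\ell_3$ is a cubic block with three input wire-pairs admitting a proper $3$-edge-coloring compatible with prescribed inputs exactly when the inputs are not all false. Wiring the literal-copies into the clause gadgets and capping any remaining half-edges so that every vertex has degree exactly $3$ produces $G_\varphi$, whose size is linear in $|\varphi|$. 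A satisfying assignment colors each variable's wires according to its truth value and extends through inverters and clause gadgets to a proper $3$-edge-coloring; conversely, any proper $3$-edge-coloring of $G_\varphi$ induces (because wires force consistency) a well-defined assignment, and this assignment satisfies every clause because the clause gadgets forbid the all-false interface. Thus $\chi'(G_\varphi)=3$ iff $\varphi$ is satisfiable, which together with NP-membership gives NP-completeness, and in particular NP-hardness.

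The main obstacle is the explicit design and verification of the clause gadget: I need a cubic graph with three designated input interfaces for which (i) it is genuinely $3$-regular, (ii) in every proper $3$-edge-coloring the pattern induced on the three interfaces is one of the seven ``not all false'' combinations, and (iii) every such combination extends to a proper $3$-edge-coloring of the whole gadget. Checking (ii) and (iii) is a finite but delicate case analysis that leans heavily on the parity/monochromatic-cut observation above, and one must keep the gadget small enough for the reduction to stay polynomial. A secondary nuisance is the bookkeeping at the interfaces --- matching half-edges between wires and gadgets so the global graph remains simple and cubic --- which is precisely what the ``pair of parallel wires'' encoding is designed to handle; one should also note that $G_\varphi$ may be taken connected, or else argue componentwise.
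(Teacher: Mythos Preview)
The paper does not prove this statement at all: Theorem~\ref{thm:holyer} is simply quoted from Holyer's paper~\cite{hol-1981} as a known result to be used later, with no accompanying argument. So there is no ``paper's own proof'' to compare against, and your entire proposal goes well beyond what the paper does.

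As for the content of your sketch, it is a faithful high-level outline of a Holyer-style reduction, and the parity observations (each color class of a proper $3$-edge-coloring of a cubic graph is a perfect matching; hence a $2$-edge cut with an even side is monochromatic and a $3$-edge cut with an odd side is rainbow) are correct and are indeed the engine behind the gadgetry. However, the proposal is, as you yourself flag, incomplete in the crucial place: you do not actually exhibit a variable gadget, an inverter, or a clause gadget, nor verify properties (i)--(iii). These constructions are the entire technical content of Holyer's theorem, and they are genuinely delicate; for instance, your proposed true/false encoding (equal versus distinct colors on a wire-pair) and the accompanying ``inverter'' are not obviously realizable by a small cubic block, and Holyer's original encoding is somewhat different. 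So what you have is a correct strategic plan plus an honest acknowledgement of the missing hard part, not a proof. If you intend to include a proof rather than a citation, you would need to supply the explicit gadgets and their verification, or cite a source that does.
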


The next lemma is generally known as {\em Vizing's Adjacency Lemma}.

\begin{lemma} {\rm \cite{viz-1965}}
\label{VizingAdj}
Let $G$ be a critical (simple) graph and let $e=xy \in E(G)$. Then $x$ is adjacent to at least $\Delta(G)+1-d_G(y)$ vertices, distinct from $y$, having maximum degree.
\end{lemma}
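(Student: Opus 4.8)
The plan is a recoloring argument in the spirit of Vizing's proof of Theorem~\ref{thm:vising}. Since $G$ is critical of Class $2$, we have $\chi'(G)=\Delta+1$ with $\Delta=\Delta(G)$, and $G\setminus e$ admits a proper $\Delta$-edge-coloring $c$. For $v\in V(G)$ write $\overline c(v)=[\Delta]\setminus c(v)$ for the set of colors missing at $v$; then $|\overline c(x)|=\Delta+1-d_G(x)$ and $|\overline c(y)|=\Delta+1-d_G(y)$, both positive by Theorem~\ref{thm:vising}. The first, easy step is to observe that $\overline c(x)\cap\overline c(y)=\emptyset$: a color missing at both $x$ and $y$ could be assigned to $e$, producing a proper $\Delta$-edge-coloring of $G$ and contradicting $\chi'(G)=\Delta+1$.

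Next I would isolate the relevant neighbors of $x$. For each $\beta\in\overline c(y)$ we have $\beta\notin\overline c(x)$ by the previous step, so $\beta$ is the color of some edge $xw_\beta$ of $G\setminus e$; since $e=xy$ is not an edge of $G\setminus e$ we get $w_\beta\ne y$, and $\beta\mapsto w_\beta$ is injective because distinct edges at $x$ receive distinct colors. Thus $\{w_\beta:\beta\in\overline c(y)\}$ consists of $\Delta+1-d_G(y)$ neighbors of $x$, all different from $y$, and it suffices to prove the claim: \emph{if $\beta\in\overline c(y)$ and $w=w_\beta$, then $d_G(w)=\Delta$.}

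To prove the claim I would argue by contradiction, assuming $d_G(w)<\Delta$, so that $\overline c(w)\ne\emptyset$. The aim is to turn $c$ into another proper $\Delta$-edge-coloring $c'$ of $G\setminus e$ with $\beta$ missing at both $x$ and $y$; assigning $\beta$ to $e$ then again contradicts $\chi'(G)=\Delta+1$. If some color lies in $\overline c(x)\cap\overline c(w)$, recoloring the edge $xw$ (currently colored $\beta$) with it already achieves this, so we may assume $\overline c(x)\cap\overline c(w)=\emptyset$. Pick $\alpha\in\overline c(x)$ and $\gamma\in\overline c(w)$; then $\alpha,\beta,\gamma$ are pairwise distinct, $\alpha\in c(w)$ and $\gamma\in c(x)$, so the path $P_x(\gamma,\alpha)$ of Remark~\ref{r:path} is well defined. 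If $P_x(\gamma,\alpha)$ does not end at $w$, then $w$ does not lie on it (since $w$ misses $\gamma$, it cannot be an interior vertex of a $(\gamma,\alpha)$-path), so switching $\gamma$ and $\alpha$ along $P_x(\gamma,\alpha)$ makes $\gamma$ missing at $x$, keeps $\gamma$ missing at $w$, and does not touch color $\beta$; recoloring $xw$ with $\gamma$ then frees $\beta$ at $x$ without affecting it at $y$, a contradiction. Hence $P_x(\gamma,\alpha)$ must end at $w$. I would then iterate this scheme along a maximal sequence of distinct neighbors $y=y_0,y_1,\dots$ of $x$ with $c(xy_i)$ missing at $y_{i-1}$, using at each stage a color missing at the last vertex of the sequence together with a suitable Kempe chain based at $x$; exactly as in the proof of Vizing's theorem, such a fan cannot grow indefinitely, and the obstruction to extending it produces the required recoloring, giving the contradiction. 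This establishes the claim, and hence the lemma.

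The main obstacle is the bookkeeping in the last step: a single Kempe swap does not suffice, because after switching $\gamma$ and $\alpha$ along $P_x(\gamma,\alpha)$ the vertices $x$ and $w$ may still share no missing color, so the recoloring has to be carried out sequentially along the fan of neighbors of $x$ while checking at every stage that the target color $\beta$ remains missing at $y$ and that the chains used avoid the sensitive part of the configuration. The tightest instance is $d_G(w)=\Delta-1$, where $\overline c(w)$ is a single color.
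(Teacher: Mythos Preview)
The paper does not prove Lemma~\ref{VizingAdj}; it is quoted from~\cite{viz-1965} and used only as a black box (to derive Lemma~\ref{Fournier}). So there is no ``paper's proof'' to compare against, and your proposal has to stand on its own.

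Your reduction to the claim ``for every $\beta\in\overline c(y)$ the vertex $w_\beta$ satisfies $d_G(w_\beta)=\Delta$'' is natural, but this claim is \emph{false}, so the argument cannot be completed along the lines you sketch. Take $G$ on $\{v_1,\dots,v_5\}$ with edge set $\{v_1v_3,v_1v_4,v_1v_5,v_2v_3,v_2v_4,v_2v_5,v_3v_4\}$ (i.e.\ $K_4$ with one edge subdivided). Here $\Delta=3$ and $|E(G)|=7>\tfrac{(n-1)\Delta}{2}$, so $G$ is Class~2 by Lemma~\ref{Bondy}; one checks directly that $G-e$ is $3$-edge-colorable for every edge $e$, so $G$ is critical. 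Put $e=xy=v_1v_3$ and take the proper $3$-coloring of $G\setminus e$ given by $c(v_1v_4)=1$, $c(v_1v_5)=2$, $c(v_2v_3)=1$, $c(v_2v_4)=2$, $c(v_2v_5)=3$, $c(v_3v_4)=3$. Then $\overline c(y)=\{2\}$, so the unique $w_\beta$ is $w_2=v_5$, which has degree $2<\Delta$. (The lemma itself is of course still satisfied: $v_1$ is adjacent to $v_4\ne y$, and $d_G(v_4)=\Delta$.)

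This example also shows exactly where your ``iterate'' step breaks. With $\alpha=3\in\overline c(x)$ and $\gamma=1\in\overline c(w)$, the path $P_x(1,3)$ is $v_1v_4v_3v_2v_5$, ending at $w=v_5$; the Vizing fan at $x$ from $y$ is $v_3,v_5,v_4$, and it terminates because $\overline c(v_4)=\emptyset$. The obstruction to extending the fan is that the \emph{terminal} fan vertex $v_4$ has degree $\Delta$; this gives no contradiction whatsoever to $d_G(w_\beta)<\Delta$, only information about a different neighbor of $x$. In the genuine proof one does not decide in advance which $\Delta+1-d_G(y)$ neighbors of $x$ will have maximum degree. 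Instead one analyzes the whole fan $y_0=y,y_1,\dots,y_k$, shows via Kempe chains that the sets $\overline c(x),\overline c(y_0),\dots,\overline c(y_k)$ interact rigidly enough to force many of the $y_i$ (not specifically the $w_\beta$'s) to satisfy $\overline c(y_i)=\emptyset$, and then counts.
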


In addition, the following result due to Fournier~\cite{four-1973} about the core of a graph will be used several times in the paper.

\begin{lemma} {\rm \cite{four-1973}}
\label{Fournier}
Let $G$ be a graph. If $Core(G)$ is a forest, then $G$ is of Class $1$.
\end{lemma}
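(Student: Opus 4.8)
The plan is to argue by contradiction, reducing to a critical subgraph and then applying Vizing's Adjacency Lemma to show that its core contains a cycle. Suppose $G$ is of Class~2. As recalled in the introduction, $G$ then contains a subgraph $H$ that is critical of Class~2 with $\Delta(H)=\Delta(G)=:\Delta$; note $\Delta\ge 2$, since any graph with $\Delta\le 1$ is a matching and hence of Class~1.

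First I would show that $Core(H)$ has minimum degree at least $2$. Let $x\in V(Core(H))$, so $d_H(x)=\Delta$, and let $y$ be a neighbor of $x$ in $H$. Applying Lemma~\ref{VizingAdj} to $H$ and the edge $xy$, the vertex $x$ is adjacent in $H$ to at least $\Delta+1-d_H(y)$ vertices of maximum degree other than $y$. If $d_H(y)=\Delta$, this yields at least one maximum-degree neighbor of $x$ besides $y$, and together with $y$ this gives $x$ at least two neighbors in $Core(H)$. If $d_H(y)\le \Delta-1$, then $x$ has at least $\Delta+1-d_H(y)\ge 2$ maximum-degree neighbors, again at least two neighbors in $Core(H)$. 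Since $Core(H)$ is a finite simple graph with minimum degree at least $2$, it contains a cycle.

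Next I would observe that $Core(H)$ is a subgraph of $Core(G)$: if $v\in V(Core(H))$, then $\Delta=d_H(v)\le d_G(v)\le \Delta(G)=\Delta$, so $d_G(v)=\Delta(G)$ and $v\in V(Core(G))$; and every edge of $Core(H)$ joins two such vertices and lies in $E(H)\subseteq E(G)$, hence in $E(Core(G))$. Therefore the cycle found in $Core(H)$ is also a cycle in $Core(G)$, contradicting the hypothesis that $Core(G)$ is a forest. Hence $G$ is of Class~1.

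The only delicate point is the first step: the bookkeeping in the two cases $d_H(y)=\Delta$ and $d_H(y)<\Delta$ must be done carefully so that together they force \emph{two} neighbors of $x$ inside $Core(H)$ (not merely two maximum-degree neighbors of $x$ in $H$, one of which might coincide with $y$). Everything else --- passing to a critical subgraph, the fact that minimum degree at least $2$ forces a cycle, and the inclusion $Core(H)\subseteq Core(G)$ --- is routine.
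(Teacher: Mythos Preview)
Your proof is correct and follows essentially the same approach as the paper's: pass to a critical Class~2 subgraph $H$ with $\Delta(H)=\Delta(G)$, use Vizing's Adjacency Lemma to force every vertex of $Core(H)$ to have at least two neighbors in $Core(H)$, and conclude that $Core(H)\subseteq Core(G)$ contains a cycle. The only cosmetic difference is that the paper first picks a leaf or isolated vertex $u$ of the forest $Core(H)$ and applies the Adjacency Lemma to an edge $uw$ with $w\notin Core(H)$ (thereby avoiding your case split on $d_H(y)$), but the content is identical.
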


The previous lemma was first proved by Fournier~\cite{four-1973}. This lemma is an immediate consequence of Vizing's Adjacency Lemma. To see this, by contradiction assume that $G$ is of Class 2. Clearly, $\Delta(G) \geq 2$. Let $G'$ be a critical subgraph of $G$ such that $\Delta(G')=\Delta(G)$ and $G'$ is of Class 2. Since $Core(G')$ is a subgraph of $Core(G)$, so $Core(G')$ is a forest. Let $u \in V(Core(G'))$ and $d_{Core(G')}(u) \leq1$. Since $\Delta(G') \geq 2$, there exists $w \in V(G') \setminus V(Core(G'))$ such that $uw \in E(G')$. By Lemma~\ref{VizingAdj}, $u$ is adjacent to at least $\Delta(G') - (\Delta(G')-1) + 1 = 2$ vertices of $Core(G')$, a contradiction.

In addition, the following result due to Akbari et al.~\cite{akbari-2011} about the core of a graph will be used several times in the paper.
A {\emph{unicyclic graph}} is a connected graph with exactly one cycle.

\begin{lemma} {\rm \cite{akbari-2011}}
\label{Akbari}
Let $G$ be a connected graph. If every connected component of $Core(G)$ is a unicyclic graph or a tree, and $Core(G)$ is not a disjoint union of cycles, then $G$ is of Class $1$.
\end{lemma}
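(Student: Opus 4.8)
The plan is to argue by contradiction, so suppose that $G$ is of Class 2. First, if $\Delta(G)\le 2$ then, $G$ being connected, $G$ is $K_2$, a path, or a cycle; but $K_2$, paths and even cycles are of Class~1, while an odd cycle $C$ has $Core(C)=C$, a disjoint union of cycles. Each alternative contradicts one of our hypotheses, so we may assume $\Delta(G)\ge 3$. Since $G$ is of Class 2, it contains a critical Class 2 subgraph $G'$ with $\Delta(G')=\Delta(G)$. As $E(G')\subseteq E(G)$ and a vertex of degree $\Delta(G')$ in $G'$ has degree $\Delta(G)$ in $G$, the graph $Core(G')$ is a subgraph of $Core(G)$, so each of its components is a tree or a unicyclic graph. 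Arguing exactly as in the deduction of Lemma~\ref{Fournier} from Lemma~\ref{VizingAdj}, one gets $\Delta(G')\ge 2$ and, for every $u\in V(Core(G'))$, that $u$ has at least two neighbours of maximum degree in $G'$ (otherwise $u$ would have a neighbour $w$ in $G'$ with $d_{G'}(w)\le\Delta(G')-1$, and Vizing's Adjacency Lemma would supply two such neighbours, a contradiction). Hence $Core(G')$ has minimum degree at least $2$; since a tree with an edge has a leaf and a unicyclic graph of minimum degree at least $2$ is a cycle, $Core(G')$ is a nonempty disjoint union of cycles.

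The second step is to pull this structure back to $Core(G)$. I claim no vertex $v\in V(G')\setminus V(Core(G'))$ has $d_G(v)=\Delta(G)$. Suppose one does. As $d_{G'}(v)<\Delta(G')$, Vizing's Adjacency Lemma applied to an edge of $G'$ at $v$ yields two distinct neighbours $w_1,w_2$ of $v$ with $d_{G'}(w_1)=d_{G'}(w_2)=\Delta(G')$; then $w_1,w_2\in V(Core(G'))$ lie on cycles of $Core(G')$, and since $vw_1,vw_2\in E(G')\subseteq E(G)$ with $d_G(v)=d_G(w_1)=d_G(w_2)=\Delta(G)$, we also have $vw_1,vw_2\in E(Core(G))$. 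If $w_1,w_2$ lie on one cycle $C$ of $Core(G')$, then $C$ together with the edges $vw_1,vw_2$ contains two distinct cycles of $Core(G)$; if they lie on different cycles, $v$ joins these two cycles inside $Core(G)$. Either way some component of $Core(G)$ carries two cycles, contradicting that every component of $Core(G)$ is a tree or unicyclic. This proves the claim, so $V(Core(G))\cap V(G')=V(Core(G'))$; moreover, every $G$-edge at a vertex of $Core(G')$ lies in $G'$, whence $Core(G')=Core(G)[V(Core(G'))]$ and no edge of $Core(G)$ joins $V(Core(G'))$ to $X:=V(Core(G))\setminus V(G')$. Thus $Core(G)$ is the disjoint union of $Core(G')$ (a disjoint union of cycles) and $Core(G)[X]$.

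Since $Core(G)$ is not a disjoint union of cycles, $Core(G)[X]$ is nonempty and has a component that is a tree or a unicyclic non-cycle, hence a vertex $v$ with $d_{Core(G)}(v)\le 1$. Thus $d_G(v)=\Delta(G)$, $v\notin V(G')$, and $v$ has at most one neighbour of maximum degree in $G$. The remaining, and hardest, step is to derive a contradiction from the existence of such a $v$. The intended route is to use connectedness of $G$ to enlarge $G'$ — by $v$, its edges into $V(G')$, and if necessary a short connection through $V(G)\setminus V(G')$ — and to pass to a critical Class 2 subgraph $G''$ of the enlargement with $\Delta(G'')=\Delta(G)$; edge-criticality of $G'$ forces $G''=G'$ or $v\in V(G'')$, and in the latter case the previous two paragraphs applied to $G''$ force $v$ to have at least two neighbours of maximum degree in $G$, contradicting the choice of $v$. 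The difficulty is to guarantee that $v$ can be brought into a critical subgraph at all — for instance by iterating the enlargement, or by starting from a spanning edge-minimal subgraph of chromatic index $\chi'(G)$ and using connectedness to argue that its ``essential'' part must reach $v$ — after which the proof is complete. That a critical Class 2 graph of maximum degree at least $3$ can itself have a disjoint union of cycles as its core (for instance $K_{3,3}$ with a perfect matching subdivided) shows that this last step, and not Lemma~\ref{VizingAdj} by itself, is where the content of the lemma lies.
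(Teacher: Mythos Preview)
The paper does not give its own proof of this lemma; it is simply cited from~\cite{akbari-2011}. So there is no in-paper argument to compare your attempt against, and your proposal must be judged on its own.

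Your first two steps are correct and nicely organised: passing to a critical subgraph $G'$, using Vizing's Adjacency Lemma to force $Core(G')$ to be $2$-regular (hence a disjoint union of cycles), and then showing that $Core(G)$ splits as $Core(G')$ together with $Core(G)[X]$ with no edges between them. That part is fine.

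The genuine gap is exactly where you say it is. In the final step you need a contradiction from the vertex $v\in X$ with $d_{Core(G)}(v)\le 1$, and you only sketch an ``intended route''. The key assertion---that after enlarging $G'$ to some $H\supseteq G'$ containing $v$, any critical Class~2 subgraph $G''\subseteq H$ with $\Delta(G'')=\Delta(G)$ satisfies $G''=G'$ or $v\in V(G'')$---is unjustified and, as stated, not true: nothing prevents one from simply taking $G''=G'$ again, and iterating the enlargement does not change this. No amount of repetition forces $v$ into a critical subgraph, because the existence of a critical subgraph is a purely local certificate that $\chi'(H)=\Delta(G)+1$, and $G'$ already provides that certificate regardless of what else you adjoin. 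Your own closing observation---that a critical Class~2 graph of maximum degree at least $3$ can have a disjoint union of cycles as its core---confirms that Vizing's Adjacency Lemma by itself cannot finish the argument; the proof in~\cite{akbari-2011} uses Vizing fans and a finer recolouring analysis, not just the adjacency lemma. As it stands, your proposal is a correct reduction to the hard case, but the hard case is left open.
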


Let $G$ be a graph. A {\emph{balanced edge coloring}} is a proper $\chi'(G)$-edge coloring of $G$ such that $||C_i|-|C_j||\leq 1$, for $1 \leq i,j \leq \chi'(G)$.
Balister et al.~\cite{bal-2002} proved that somewhat surprisingly every graph has such an edge coloring. 

\begin{lemma} {\rm \cite{bal-2002}}
\label{BalancedColoring}
Every graph $G$ has a balanced edge coloring.
\end{lemma}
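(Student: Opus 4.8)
The plan is to prove this by a rebalancing argument: start from an arbitrary proper $\chi'(G)$-edge coloring and repeatedly move a single edge from an oversized color class to an undersized one by interchanging colors along a suitable Kempe chain, using a monovariant to guarantee that this process terminates in a balanced coloring.

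First I would fix a proper $\chi'(G)$-edge coloring $c$ of $G$ with color classes $C_1,\dots,C_{\chi'(G)}$. Note that every $C_i$ is nonempty, since otherwise the remaining colors would give a proper $(\chi'(G)-1)$-edge coloring, contradicting the minimality of $\chi'(G)$. Define the monovariant $\Phi(c)=\sum_{i=1}^{\chi'(G)}|C_i|^2$, which is a nonnegative integer. If $c$ is already balanced, we are done; otherwise there are indices $i$ and $j$ with $|C_i|\ge |C_j|+2$, and in particular $|C_i|\ge 3$ and $|C_j|\ge 1$.

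Next I would consider the subgraph $H$ of $G$ with $E(H)=C_i\cup C_j$. Each vertex of $H$ is incident with at most one edge of each of the two colors, so $\Delta(H)\le 2$ and $H$ is a disjoint union of paths and cycles; every cycle of $H$ alternates between $c_i$ and $c_j$, hence is even and contributes equally many edges to $C_i$ and to $C_j$. Since $|C_i|>|C_j|$, some path component $P$ of $H$ must contain strictly more edges of color $c_i$ than of color $c_j$; such a $P$ has odd length and both of its end-edges are colored $c_i$. Taking an endpoint $u$ of $P$, we have $c_i\in c(u)$ and $c_j\notin c(u)$ (else $u$ would have degree $2$ in $H$), so $P=P_u(c_i,c_j)$, and interchanging $c_i$ and $c_j$ along $P$ yields, by Remark~\ref{r:path}, a new proper $\chi'(G)$-edge coloring $c'$ of $G$. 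In $c'$ the class $C_i$ has lost exactly one edge, $C_j$ has gained exactly one edge, every other class is unchanged, and both affected classes are still nonempty because $|C_i|\ge 3$ and $|C_j|\ge 1$.

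Finally, a direct computation gives
\[
\Phi(c')-\Phi(c)=(|C_i|-1)^2+(|C_j|+1)^2-|C_i|^2-|C_j|^2=-2\bigl(|C_i|-|C_j|-1\bigr)\le -2 ,
\]
using $|C_i|-|C_j|\ge 2$, so each rebalancing step strictly decreases the nonnegative integer $\Phi$. Hence after finitely many steps the process halts, and a coloring from which no such step is possible has no two color classes differing in size by $2$ or more, i.e., it is a balanced edge coloring. The only point that requires real care — the main (and quite mild) obstacle — is establishing that whenever $|C_i|>|C_j|$ there is an odd $(c_i,c_j)$-path with both end-edges colored $c_i$; this is precisely the parity/structure analysis of the two-colored subgraph $H$ carried out above, and once it is in place the remainder is routine bookkeeping with the monovariant $\Phi$.
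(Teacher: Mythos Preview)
Your argument is correct: the two-colored subgraph $H=C_i\cup C_j$ decomposes into paths and even cycles, an imbalance $|C_i|>|C_j|$ forces an odd path with both end-edges colored $c_i$, the Kempe swap along it (justified by Remark~\ref{r:path}) transfers exactly one edge from $C_i$ to $C_j$, and the monovariant $\Phi(c)=\sum_i|C_i|^2$ strictly decreases, so the process terminates in a balanced coloring.

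There is nothing to compare against in the paper itself: Lemma~\ref{BalancedColoring} is quoted from~\cite{bal-2002} without proof and used as a black box. What you have written is the standard elementary rebalancing proof of the result, and it is complete as stated. (The side remark that $|C_i|\ge 3$ is stronger than needed---$|C_i|\ge 2$ would suffice to keep $C_i$ nonempty after the swap---but this is harmless, and in fact nonemptiness of all color classes is automatic throughout since a proper edge coloring with an empty class would contradict the definition of $\chi'(G)$.)
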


In the rest of this section we present the chromatic edge stability index for some families of graphs. Since $\chi'(P_n)=2$ (for $n \geq 3$) and each graph $G$ with $\Delta(G)=2$ has $\chi'(G) \geq 2$, $\es(P_n)=\lfloor \frac{n-1}{2} \rfloor$ (for $n \geq 3$). For cycles, the chromatic edge stability index depends on the parity of $n$. If $n$ is odd, then $\es(C_n)=1$. If $n$ is even, then $\es(C_n)=\frac{n}{2}$, as $\chi'(C_n \setminus S) \leq 1$ if and only if $C_n \setminus S$ does not contain a vertex of degree 2.

For complete bipartite graphs, if $n \geq m$, then $\chi'(K_{m,n})=\Delta(K_{m,n})=n$. Since any subgraph of a bipartite graph is bipartite and of Class 1 (by Theorem~\ref{thm:konig}), $\chi'(K_{m,n} \setminus S) \leq n-1$ only if $\Delta(K_{m,n} \setminus S) \leq n-1$, for $S \subseteq E(G)$. Thus $|S| \geq m$. Since for any maximum matching $M$ of $K_{m,n}$, $\chi'(K_{m,n} \setminus M)=n-1$, $\es(K_{m,n}) \leq m$. Combining both inequalities, we get $\es(K_{m,n})=m$.

\begin{lemma}\label{l:complete}
If $G$ is a complete graph of order $n$, then $\es(G)=\lfloor \frac{n}{2} \rfloor.$
\end{lemma}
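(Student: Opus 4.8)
The plan is to split the argument according to the parity of $n$ and to use the known values of $\chi'(K_n)$ in each case. Recall that $\chi'(K_n)=n-1$ when $n$ is even and $\chi'(K_n)=n$ when $n$ is odd; in both cases $K_n$ is regular of degree $n-1$, so $\lfloor n/2\rfloor$ is exactly the number of edges in a maximum matching. The upper bound $\es(K_n)\le\lfloor n/2\rfloor$ is immediate from Lemma~\ref{l:bound}, so the whole content is the lower bound $\es(K_n)\ge\lfloor n/2\rfloor$.

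First I would treat $n$ even. Here $\chi'(K_n)=n-1=\Delta(K_n)$, so $K_n$ is of Class~1, and to lower the chromatic index of $K_n\setminus F$ below $n-1$ we must in particular drop the maximum degree, i.e.\ $\Delta(K_n\setminus F)\le n-2$. Since every vertex of $K_n$ has degree $n-1$, the set $F$ must cover every vertex of $K_n$, so $F$ contains at least $\lceil n/2\rceil=n/2=\lfloor n/2\rfloor$ edges. This already gives the bound. For $n$ odd, $\chi'(K_n)=n=\Delta(K_n)+1$, so $K_n$ is of Class~2, and merely dropping the maximum degree is not by itself necessary; one could instead hope to keep $\Delta=n-1$ but make the graph Class~1. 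The way around this is Lemma~\ref{Bondy}: $K_n$ with $n$ odd has $|E(K_n)|=\binom{n}{2}=\frac{(n-1)n}{2}>\frac{(n-1)\Delta(K_n\setminus F)}{2}$ whenever $\Delta(K_n\setminus F)<n$, hence any $F$ with $|F|<\lfloor n/2\rfloor=(n-1)/2$ leaves a graph that is still of Class~2 with chromatic index $n$ --- provided such an $F$ cannot decrease $\Delta$ either. Concretely, if $|F|\le (n-1)/2 - 1 = (n-3)/2$, then $F$ covers at most $n-3$ vertices, so at least three vertices retain degree $n-1$; thus $\Delta(K_n\setminus F)=n-1$, and then Lemma~\ref{Bondy} applied to $K_n\setminus F$ (which still has more than $\frac{(n-1)(n-1)}{2}$ edges, since we removed fewer than $(n-1)/2$ edges) forces $K_n\setminus F$ to be of Class~2, so $\chi'(K_n\setminus F)=n=\chi'(K_n)$. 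Hence $\es(K_n)\ge (n-1)/2=\lfloor n/2\rfloor$.

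To combine: in both parities, any mitigating set $F$ with fewer than $\lfloor n/2\rfloor$ edges fails --- in the even case because $\Delta$ cannot drop, in the odd case because either $\Delta$ does not drop and Lemma~\ref{Bondy} keeps the graph Class~2, which is the remaining obstacle to handle carefully (one must verify the edge-count inequality $|E(K_n\setminus F)|>\frac{(n-1)\Delta(K_n\setminus F)}{2}$ survives the removal of up to $(n-3)/2$ edges, which is a short computation). Together with the upper bound from Lemma~\ref{l:bound}, this yields $\es(K_n)=\lfloor n/2\rfloor$.

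The main obstacle I anticipate is precisely the odd case: unlike the even case, a small $F$ does not automatically preserve $\Delta$ in a way that blocks all possibilities, so one genuinely needs the counting argument through Lemma~\ref{Bondy}, and one must be slightly careful that after deleting $|F|<(n-1)/2$ edges the hypothesis of Lemma~\ref{Bondy} (namely $|E|>\frac{(n-1)\Delta}{2}$ with $\Delta$ possibly equal to $n-1$) still holds. The even case, by contrast, is a one-line degree/covering argument.
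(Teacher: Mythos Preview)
Your argument is correct. For the even case you use a degree-covering observation (any mitigating set must touch every vertex of a Class~1 regular graph), whereas the paper gives a uniform counting argument in both parities: with $\chi'(K_n)-1$ colors one can cover at most $(\chi'(K_n)-1)\lfloor n/2\rfloor$ edges, and subtracting this from $\binom{n}{2}$ forces $|S|\ge\lfloor n/2\rfloor$. Your even case is arguably cleaner. In the odd case your route via Lemma~\ref{Bondy} is essentially the same counting argument repackaged, but it costs you the extra step of checking that $\Delta$ does not drop; the paper's direct edge count avoids tracking $\Delta$ altogether, since it bounds $|E(K_n\setminus S)|$ purely in terms of the target number of colors.
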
 
\begin{proof}
First, let $n$ be even. Hence $\chi'(K_n)=n-1$, by Theorem~\ref{thm:vising} and {\rm \cite[p.274]{west-2001}}. Since each color class contains at most $\frac{n}{2}$ edges, we can color at most $\frac{n(n-2)}{2}$ edges with $n-2$ colors. Therefore, if $\chi'(K_n \setminus S)=n-2$, then $|S| \geq {n \choose 2}-\frac{n(n-2)}{2}=\frac{n}{2}$. Together with Lemma~\ref{l:bound} we get $\es(K_n)=\frac{n}{2}.$

Now, let $n$ be odd. Hence $\chi'(K_n)=n$, by Lemmas~\ref{thm:vising},~\ref{Bondy}. Since each color class contains at most $\frac{n-1}{2}$ edges, we can also color at most $\frac{(n-1)(n-1)}{2}$ edges with $n-1$ colors. Therefore, if $\chi'(K_n \setminus S)=n-1$, then $|S| \geq {n \choose 2}-\frac{(n-1)(n-1)}{2}=\frac{n-1}{2}= \lfloor \frac{n}{2} \rfloor$. Together with Lemma~\ref{l:bound} we get $\es(K_n)=\lfloor \frac{n}{2} \rfloor.$
\end{proof}

\section{Graphs with extreme values of $\es(G)$}\label{sec:es1}
In this section, we first consider the graphs with $\es(G)=\lfloor \frac{n}{2} \rfloor$ and then continue with investigating the graphs with $\es(G)=1$. The results yield that Conjecture~\ref{conj1} holds for both classes. 

Our first result shows that complete graphs $K_{n}$ (with odd $n$) are the only graphs of Class 2 with $\es(G)= \lfloor \frac{n}{2} \rfloor$.
\begin{theorem}\label{p:extremeClass2}
If $G$ is a Class $2$ graph of order $n$, then $\es(G)=\lfloor \frac{n}{2}\rfloor$ if and only if $n$ is odd and $G$ is isomorphic to $K_n$.
\end{theorem}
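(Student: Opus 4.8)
The plan is to prove the non-trivial direction by a simple averaging argument over color classes, combined with Vizing's theorem and a handshake count; the other direction is immediate from the lemmas already proved. For the easy direction, if $n$ is odd and $G\cong K_n$, then $|E(K_n)|={n\choose 2}=\frac{n(n-1)}{2}>\frac{(n-1)^2}{2}=\frac{(n-1)\Delta(K_n)}{2}$, so $K_n$ is of Class~$2$ by Lemma~\ref{Bondy}, and $\es(K_n)=\lfloor n/2\rfloor$ by Lemma~\ref{l:complete}; hence the right-hand side of the equivalence implies the left-hand side.

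For the forward direction, assume $G$ is of Class~$2$ with $\es(G)=\lfloor n/2\rfloor$, and write $\Delta=\Delta(G)$ and $m=|E(G)|$, so that $\chi'(G)=\Delta+1$ by Theorem~\ref{thm:vising}. First I would fix an arbitrary proper $(\Delta+1)$-edge-coloring of $G$: since it has $\Delta+1$ color classes, one of them contains at most the average number $m/(\Delta+1)$ of edges, hence at most $\lfloor m/(\Delta+1)\rfloor$ edges (it is an integer), and so Remark~\ref{r:C1} gives $\es(G)\le\lfloor m/(\Delta+1)\rfloor$. Combined with the hypothesis this yields $(\Delta+1)\lfloor n/2\rfloor\le m$. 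Second, the handshake inequality gives $m\le n\Delta/2$. Putting the two bounds together, $(\Delta+1)\lfloor n/2\rfloor\le n\Delta/2$.

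It then remains to squeeze everything out of this single inequality. If $n$ were even it would read $(\Delta+1)n/2\le n\Delta/2$, which is absurd; hence $n$ is odd. With $n$ odd it becomes $(\Delta+1)(n-1)\le n\Delta$, which simplifies to $n-1\le\Delta$, and since $\Delta\le n-1$ always, we get $\Delta=n-1$. Substituting back, $m\ge(\Delta+1)\tfrac{n-1}{2}={n\choose 2}$, while trivially $m\le{n\choose 2}$; thus $m={n\choose 2}$, i.e.\ $G\cong K_n$.

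The argument is short and I do not anticipate a real obstacle; the one place that needs care is the bookkeeping of floors according to the parity of $n$, namely ruling out even $n$ and then forcing $\Delta=n-1$ (and therefore $m={n\choose 2}$) out of the lone inequality $(\Delta+1)\lfloor n/2\rfloor\le n\Delta/2$. It is worth remarking that the step of dividing by $\Delta+1$ rather than $\Delta$ is exactly where the Class~$2$ hypothesis is used: for Class~$1$ graphs the same averaging would only give the much weaker estimate $\es(G)\le\lfloor m/\Delta\rfloor$. Note also that no appeal to Lemma~\ref{BalancedColoring} is needed here, since the averaging over color classes of an arbitrary optimal edge coloring already suffices.
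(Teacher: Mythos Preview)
Your proof is correct and follows essentially the same route as the paper's. The only cosmetic difference is that the paper, instead of averaging, observes directly that under the hypothesis $\es(G)=\lfloor n/2\rfloor$ every color class of a proper $(\Delta+1)$-edge coloring must have exactly $\lfloor n/2\rfloor$ edges (at least this many by Remark~\ref{r:C1}, at most this many because a color class is a matching), giving the equality $m=(\Delta+1)\lfloor n/2\rfloor$; from there the paper reaches the same contradiction for even $n$ and the same forcing of $\Delta=n-1$ and $G\cong K_n$ for odd $n$.
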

\begin{proof}
Let $G$ be a Class 2 graph of order $n$. 
We first prove that $\es(G) < \frac{n}{2}$, if $n$ is even. Therefore, let $G$ be a Class 2 graph of even order. By contradiction suppose that $\es(G) = \frac{n}{2}$. Then by Remark~\ref{r:C1} every color class of any proper $\chi'(G)$-edge coloring contains exactly $\frac{n}{2}$ edges. Thus $|E(G)|=(\Delta(G)+1)\frac{n}{2}$, which is a contradiction, since $|E(G)| \leq \frac{\Delta(G)n}{2}$ holds in every graph $G$. 
We may thus assume that $n$ is odd.

Now, let $c$ be a proper $\chi'(G)$-edge coloring of $G$ with $\Delta(G)+1$ colors and suppose that $\es(G) = \frac{n-1}{2}$. Then by Remark~\ref{r:C1} every color class of $c$ contains exactly $\frac{n-1}{2}$ edges. Thus $|E(G)|=(\Delta(G)+1)\frac{n-1}{2}$. We note that $|E(G)| \leq \frac {n \Delta(G)}{2}$, which implies $\Delta(G) \geq n-1$, and hence $G$ is isomorphic to $K_n$. The reverse direction follows from Lemma~\ref{l:complete}.
\end{proof}
 
\begin{lemma}\label{l:evenClass1}
If $G$ is a Class $1$ graph of even order $n$, then $\es(G)=\frac{n}{2}$ if and only if $G$ is a regular graph.
\end{lemma}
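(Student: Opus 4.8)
The plan is to prove the two implications separately; both are short, and the whole argument rests on Remark~\ref{r:C1} (the smallest color class of any proper $\chi'(G)$-edge coloring has at least $\es(G)$ edges), on Lemma~\ref{l:bound}, and on the trivial fact that a matching of an $n$-vertex graph has at most $\lfloor n/2\rfloor$ edges, with equality exactly when it is a perfect matching.

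For the direction ``regular $\Rightarrow \es(G)=n/2$'', I would start from $\chi'(G)=\Delta(G)=r$ (since $G$ is of Class~$1$ and $r$-regular), so $|E(G)|=rn/2$. If $F\subseteq E(G)$ is a mitigating set, then $\chi'(G\setminus F)\le r-1$, hence $G\setminus F$ decomposes into at most $r-1$ matchings and thus has at most $(r-1)\lfloor n/2\rfloor=(r-1)n/2$ edges; therefore $|F|\ge rn/2-(r-1)n/2=n/2$. This gives $\es(G)\ge n/2$, and Lemma~\ref{l:bound} gives $\es(G)\le\lfloor n/2\rfloor=n/2$, so equality holds.

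For the converse, assume $G$ is of Class~$1$, $n$ is even, and $\es(G)=n/2$. I would fix an arbitrary proper $\chi'(G)$-edge coloring $c$; as $\chi'(G)=\Delta(G)$, the colors are $c_1,\dots,c_{\Delta(G)}$ and each color class is a matching, hence has at most $n/2$ edges. But by Remark~\ref{r:C1} the smallest color class of $c$ has at least $\es(G)=n/2$ edges, so in fact every color class of $c$ has exactly $n/2$ edges and is therefore a perfect matching of $G$. Consequently $E(G)$ is partitioned into $\Delta(G)$ perfect matchings, so every vertex lies in exactly one edge of each class and hence has degree $\Delta(G)$; that is, $G$ is regular.

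There is no genuine obstacle here; the only points that need care are to use the Class~$1$ hypothesis in the right two places --- it is what makes $\chi'(G)=\Delta(G)$, so that a mitigating set is precisely a set $F$ with $\chi'(G\setminus F)\le\Delta(G)-1$, and it is what lets us bound the number of color classes by $\Delta(G)$ --- and to use that $n$ is even, so that $\lfloor n/2\rfloor=n/2$ and a matching of that size is forced to be perfect.
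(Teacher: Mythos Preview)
Your proof is correct and follows essentially the same approach as the paper. The only minor difference is in the ``regular $\Rightarrow \es(G)=n/2$'' direction: the paper observes that any mitigating set $F$ must cover every vertex (otherwise some vertex still has degree $\Delta(G)$), whence $|F|\ge n/2$, whereas you reach the same bound by an edge count; both arguments are equivalent and equally short.
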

\begin{proof}
Let $G$ be a Class 1 graph of even order $n$, and suppose that $\es(G)=\frac{n}{2}$. If $c$ is a proper $\chi'(G)$-edge coloring of $G$ with $\Delta(G)$ colors, then each color class of $c$ must have exactly $\frac{n}{2}$ edges. Hence $|E(G)|=\Delta(G) \frac{n}{2}$, and $G$ is a $\Delta(G)$-regular graph. 

Conversely, let $G$ be a $\Delta(G)$-regular graph of Class 1. In order to reduce the chromatic index (to $\Delta(G)-1$), one must remove a set of edges $F$ from $G$ in such a way that $F$ covers all vertices of $G$. Hence $\es(G)\ge \frac{n}{2}$, and by Lemma~\ref{l:bound}, $\es(G)\le \frac{n}{2}$, which completes the proof.
\end{proof}

%

Since there are no Class 1 regular graphs of odd order, we get the following.

\begin{corollary}\label{cor:regClass1}
Let $G$ be a Class $1$ graph of order $n$. If $G$ is regular, then $\es(G)=\frac{n}{2}$.
\end{corollary}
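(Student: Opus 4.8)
The proof is a short deduction from Lemma~\ref{l:evenClass1}. The plan is to first dispatch the parenthetical claim made just before the statement — that there are no Class~1 regular graphs of odd order — and then invoke Lemma~\ref{l:evenClass1} in the even case.

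For the parity claim I would argue as follows. Suppose $G$ is an $r$-regular graph of odd order $n$; since the paper assumes every graph has at least one edge, $r\ge 1$. If $G$ were of Class~1, then a proper $\chi'(G)$-edge coloring uses exactly $\Delta(G)=r$ colors, and each color class is a matching in a graph of odd order $n$, hence has at most $\frac{n-1}{2}$ edges. This forces $|E(G)|\le r\cdot\frac{n-1}{2}<r\cdot\frac{n}{2}=|E(G)|$, a contradiction. (One could alternatively note that $|E(G)|=\frac{rn}{2}$ already forces $r$ to be even and then appeal to the non-existence of a perfect matching, but the counting argument above is cleaner and self-contained.) Consequently, every regular Class~1 graph has even order.

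Given that $G$ is a Class~1 regular graph, the previous paragraph shows that $n$ is even and $G$ is $\Delta(G)$-regular, so Lemma~\ref{l:evenClass1} applies directly and gives $\es(G)=\frac{n}{2}$, completing the proof.

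There is no substantive obstacle here; the only step that requires any care is the parity argument, where one must exclude the degenerate edgeless case and correctly use the bound $\frac{n-1}{2}$ on the size of a matching in a graph of odd order. After that, the conclusion is an immediate appeal to Lemma~\ref{l:evenClass1}.
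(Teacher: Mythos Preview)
Your proposal is correct and follows exactly the paper's approach: the paper simply asserts (immediately before the corollary) that there are no Class~1 regular graphs of odd order and then deduces the result from Lemma~\ref{l:evenClass1}, while you additionally supply the easy counting argument for that parity claim. There is nothing to add.
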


\begin{lemma}\label{l:oddClass1}
If $G$ is a Class $1$ graph of odd order $n$, then $\es(G)= \lfloor \frac{n}{2} \rfloor$ if and only if $|E(G)|=\frac{(n-1)\Delta(G)}{2}$.
\end{lemma}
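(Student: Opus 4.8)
The plan is to prove both implications by edge-counting, exploiting that every color class of a proper $\chi'(G)$-edge coloring is a matching, and that a matching in a graph of odd order $n$ has at most $\lfloor n/2\rfloor=(n-1)/2$ edges. Throughout, recall that $G$ is Class $1$, so $\chi'(G)=\Delta(G)$.

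For the forward direction, suppose $\es(G)=(n-1)/2$. Fix any proper $\Delta(G)$-edge coloring $c$ of $G$ and let $C_1$ be a smallest color class of $c$. By Remark~\ref{r:C1}, $(n-1)/2=\es(G)\le |C_1|$, while $|C_1|\le (n-1)/2$ since $C_1$ is a matching and $n$ is odd; hence $|C_1|=(n-1)/2$. As $C_1$ is the smallest color class, every color class of $c$ has exactly $(n-1)/2$ edges, so $|E(G)|=\Delta(G)\cdot\frac{n-1}{2}$, as desired.

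For the reverse direction, assume $|E(G)|=\frac{(n-1)\Delta(G)}{2}$. Lemma~\ref{l:bound} gives $\es(G)\le (n-1)/2$, so it remains to prove $\es(G)\ge (n-1)/2$. Suppose not, i.e.\ there is a mitigating set $F$ with $|F|\le\frac{n-3}{2}$, so $\chi'(G\setminus F)\le\Delta(G)-1$. Each of the at most $\Delta(G)-1$ color classes of an optimal edge coloring of $G\setminus F$ is a matching on the $n$ vertices, hence has at most $(n-1)/2$ edges; thus $|E(G\setminus F)|\le(\Delta(G)-1)\cdot\frac{n-1}{2}$. On the other hand $|E(G\setminus F)|=|E(G)|-|F|\ge\Delta(G)\cdot\frac{n-1}{2}-\frac{n-3}{2}$. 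Combining the two inequalities yields $\frac{n-1}{2}\le\frac{n-3}{2}$, a contradiction. Therefore $\es(G)=(n-1)/2=\lfloor n/2\rfloor$.

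This argument is short and purely combinatorial; there is no serious obstacle, the only point requiring a little care being the two places where one uses that a matching in an odd-order graph has at most $(n-1)/2$ edges (once for $G$ and once for $G\setminus F$, which has the same vertex set). The degenerate case $\Delta(G)=1$ is covered automatically, since then $|E(G)|=(n-1)/2$ and any mitigating set must delete all edges.
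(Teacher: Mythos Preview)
Your proof is correct and follows essentially the same approach as the paper's: both directions are handled by the same edge-counting argument, bounding color-class sizes by $(n-1)/2$ and deriving a contradiction from a hypothetical mitigating set of size at most $(n-3)/2$. Your forward direction is slightly more explicit in invoking Remark~\ref{r:C1}, but there is no substantive difference.
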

\begin{proof}
First, let $\es(G)=\frac{n-1}{2}$ and let $c$ be a proper $\chi'(G)$-edge coloring of $G$ with $\Delta(G)$ colors. Since $\es(G)=\frac{n-1}{2}$, each color class of $c$ has exactly $\frac{n-1}{2}$ edges. Hence $|E(G)|=\Delta(G)\frac{n-1}{2}$.

For the converse, let $G$ be a graph with $|E(G)|=\frac{(n-1)\Delta(G)}{2}$. Now, Lemma~\ref{l:bound} implies that $\es(G) \leq \frac{n-1}{2}$. Suppose that $\es(G) < \frac{n-1}{2}$. Let $F$ be a minimum mitigating set of $G$, that is, $\es(G)=|F| \leq \frac{n-3}{2}$, and let $G' = G \setminus F$. Hence 
\begin{equation}\label{eq:edges}
|E(G')|=|E(G)|-|F| \geq \frac{(n-1)\Delta(G)}{2}- \frac{n-3}{2}.
\end{equation} 
Since $F$ is a minimum mitigating set of $G$ and $\chi'(G)=\Delta(G)$, $\chi'(G')=\chi'(G)-1= \Delta(G)-1$. Thus there exists a proper edge coloring $c$ of $G'$ with $\Delta(G)-1$ colors. Since each color class can contain at most $\frac{n-1}{2}$ edges, we get $|E(G')| \leq (\Delta(G)-1)\frac{n-1}{2}$ which contradicts~(\ref{eq:edges}).
\end{proof}

Combining Theorem~\ref{p:extremeClass2}, and Lemmas~\ref{l:evenClass1} and \ref{l:oddClass1} we get the following characterization. 

\begin{theorem}\label{t:mainExtreme}
A graph $G$ of order $n$ has $\es(G)=\lfloor \frac{n}{2} \rfloor$ if and only if one of the following holds:
\begin{enumerate}
\item $G$ has odd order and is isomorphic to $K_n$;
\item $G$ is a Class $1$ regular graph of even order;
\item $G$ is a Class $1$ graph of odd order with $\frac{(n-1)\Delta(G)}{2}$ edges. 
\end{enumerate}
\end{theorem}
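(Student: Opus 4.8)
The plan is to simply assemble Theorem~\ref{t:mainExtreme} from the pieces already established, noting that every graph falls into exactly one of a small number of cases according to the parity of its order and its Vizing class. First I would argue the forward direction: suppose $\es(G)=\lfloor n/2\rfloor$. If $G$ is of Class 2, then by Theorem~\ref{p:extremeClass2} the order $n$ must be odd and $G\cong K_n$, which is case (1). If $G$ is of Class 1, then we split on the parity of $n$: when $n$ is even, Lemma~\ref{l:evenClass1} gives that $G$ is regular, which is case (2); when $n$ is odd, $\lfloor n/2\rfloor=\frac{n-1}{2}$, so Lemma~\ref{l:oddClass1} applies and yields $|E(G)|=\frac{(n-1)\Delta(G)}{2}$, which is case (3). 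This exhausts all possibilities, so the forward implication holds.

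For the reverse direction I would check each of the three cases in turn. In case (1), $G\cong K_n$ with $n$ odd, so $\es(G)=\lfloor n/2\rfloor$ directly by Lemma~\ref{l:complete} (equivalently by the reverse direction of Theorem~\ref{p:extremeClass2}). In case (2), $G$ is a Class 1 regular graph of even order; by Corollary~\ref{cor:regClass1} (or the reverse direction of Lemma~\ref{l:evenClass1}) we get $\es(G)=\frac{n}{2}=\lfloor n/2\rfloor$. In case (3), $G$ is a Class 1 graph of odd order $n$ with $|E(G)|=\frac{(n-1)\Delta(G)}{2}$; the reverse direction of Lemma~\ref{l:oddClass1} then gives $\es(G)=\frac{n-1}{2}=\lfloor n/2\rfloor$. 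Hence each case forces $\es(G)=\lfloor n/2\rfloor$, completing the equivalence.

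One small point worth addressing explicitly is that the three cases are genuinely a complete and (essentially) disjoint case analysis: a graph is either Class 1 or Class 2 (Theorem~\ref{thm:vising}), and if it is Class 1 it has either even or odd order; a Class 2 graph attaining the bound necessarily has odd order by the even-order part of the proof of Theorem~\ref{p:extremeClass2}, and there are no Class 1 regular graphs of odd order (as remarked just before Corollary~\ref{cor:regClass1}), so case (2) is implicitly the even-order Class 1 situation and there is no overlap to worry about. Since all the real content has already been proven in Theorem~\ref{p:extremeClass2} and Lemmas~\ref{l:evenClass1} and~\ref{l:oddClass1}, there is no genuine obstacle here; the only thing to be careful about is to organize the case split cleanly so that both directions of each lemma are invoked in the right place, and to confirm that no graph of order $n$ escapes the trichotomy.
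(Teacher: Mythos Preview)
Your proposal is correct and follows exactly the approach of the paper, which simply states that the theorem is obtained by combining Theorem~\ref{p:extremeClass2} with Lemmas~\ref{l:evenClass1} and~\ref{l:oddClass1}. Your write-up is just a more explicit rendering of that combination, organizing the case split by Vizing class and parity precisely as the paper intends.
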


Next, we focus our attention on the graphs $G$ with $\es(G)=1$.

\begin{lemma}\label{l:singelton}
A graph $G$ has $es_{\chi'}(G)= 1$ if and only if there exists a proper $\chi'(G)$-edge coloring of $G$ with a singleton color class.
\end{lemma}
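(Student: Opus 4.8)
The plan is to prove both directions of the equivalence directly from the definitions. The backward direction is essentially Remark~\ref{r:C1}: if $c$ is a proper $\chi'(G)$-edge coloring with a color class $C_i=\{e\}$ of size one, then deleting $e$ leaves a proper $(\chi'(G)-1)$-edge coloring (recolor nothing, just drop the color $c_i$), so $\chi'(G\setminus e)\le \chi'(G)-1$, and since deleting an edge cannot decrease the chromatic index by more than one, equality holds; hence $\{e\}$ is a mitigating set and $\es(G)=1$.

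For the forward direction, suppose $\es(G)=1$, so there is an edge $e=xy$ with $\chi'(G\setminus e)=\chi'(G)-1$. Write $k=\chi'(G)$ and take a proper $(k-1)$-edge coloring $c'$ of $G\setminus e$. The goal is to extend $c'$ to a proper $k$-edge coloring of $G$ in which the color class of $e$ is a singleton. The natural move is to color $e$ with a brand-new color $c_k$ not used by $c'$; this is automatically proper since $c_k$ appears nowhere else, and then $C_k=\{e\}$ is a singleton color class of a proper $k$-edge coloring of $G$. I would just need to check this is genuinely a $k$-edge coloring (it uses $k-1$ old colors plus one new one, and it is proper because $c'$ was proper on $G\setminus e$ and the new color is fresh), and that it uses exactly $k$ colors, which is forced since $\chi'(G)=k$ means fewer colors is impossible while more than $k$ is not used here.

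I do not expect any serious obstacle here; the statement is a straightforward unwinding of definitions, and the only thing to be careful about is the standard fact that deleting one edge changes the chromatic index by at most one (so that $\chi'(G\setminus e)=\chi'(G)-1$ whenever $\chi'(G\setminus e)<\chi'(G)$), which is immediate since any proper coloring of $G$ restricts to one of $G\setminus e$, and conversely a proper coloring of $G\setminus e$ extends to $G$ by giving $e$ a fresh color. If anything, the subtlety worth a sentence is that in the forward direction we must produce a coloring realizing $\chi'(G)$ colors exactly with the singleton class, rather than merely observing that some mitigating edge exists; the fresh-color extension handles this cleanly.
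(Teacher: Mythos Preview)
Your proof is correct and follows essentially the same approach as the paper: for the forward direction you extend a proper $(\chi'(G)-1)$-edge coloring of $G\setminus e$ by assigning $e$ a fresh color, and for the backward direction you invoke Remark~\ref{r:C1} (a singleton color class forces $\es(G)\le 1$) together with the trivial lower bound $\es(G)\ge 1$. The only difference is that you spell out the backward direction in slightly more detail than the paper, which simply cites Remark~\ref{r:C1} and Lemma~\ref{l:bound}.
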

\begin{proof}
Let $es_{\chi'}(G)= 1$. This means that there exists an edge $e'$ such that $\chi'(G \setminus e')  =\chi'(G)-1$. Let $c'$ be a proper $(\chi'(G)-1)$-edge coloring of $G \setminus e'$ with colors $c_1,\ldots , c_k$ and $k=\chi'(G)-1$. Then $c:E(G) \to \{c_1,\ldots , c_{k+1}\}$ with $c(e)=c'(e)$ for any $e \in E(G) \setminus e'$ and $c(e')=c_{k+1}$ is a proper $\chi'(G)$-edge coloring of the edges of $G$ which contains a singleton color class. The reverse direction follows from Remark~\ref{r:C1} and Lemma~\ref{l:bound}.
\end{proof}

\begin{lemma}\label{prp:class1}
If $G$ is a graph of Class $1$ with $es_{\chi'}(G)=1$, then $G$ contains at most two vertices of degree $\Delta(G)$, and if there are two vertices of degree $\Delta(G)$, then they are adjacent.
\end{lemma}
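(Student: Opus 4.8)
The plan is to read off the conclusion directly from the definition of $es_{\chi'}(G)=1$ together with Vizing's lower bound $\chi'(H)\ge\Delta(H)$; no delicate coloring argument is needed here, so Lemma~\ref{l:singelton} is not even required.

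First I would use the hypothesis. Since $G$ is of Class~$1$, $\chi'(G)=\Delta(G)$, and $es_{\chi'}(G)=1$ yields an edge $e'=xy\in E(G)$ with $\chi'(G\setminus e')<\chi'(G)$, hence $\chi'(G\setminus e')\le\Delta(G)-1$. Applying Theorem~\ref{thm:vising} to $G\setminus e'$ then gives $\Delta(G\setminus e')\le\chi'(G\setminus e')\le\Delta(G)-1$.

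Next I would localize the vertices of maximum degree. Deleting $e'$ lowers only the degrees of $x$ and $y$, so every vertex $v\in V(G)\setminus\{x,y\}$ satisfies $d_G(v)=d_{G\setminus e'}(v)\le\Delta(G\setminus e')\le\Delta(G)-1$; thus no such $v$ has degree $\Delta(G)$. Consequently every vertex of degree $\Delta(G)$ lies in $\{x,y\}$, so there are at most two of them, and if there are exactly two they are precisely $x$ and $y$, which are adjacent because $e'=xy\in E(G)$.

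There is essentially no obstacle: the only point worth stressing is that $es_{\chi'}(G)=1$ forces a \emph{single} edge deletion to drop the chromatic index, and for a Class~$1$ graph this immediately caps the maximum degree of the remaining graph at $\Delta(G)-1$ via Vizing's inequality, which is exactly the constraint that confines the $\Delta(G)$-vertices to the two endpoints of that edge.
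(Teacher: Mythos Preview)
Your proof is correct and follows essentially the same approach as the paper's: both hinge on the observation that for a Class~$1$ graph, a single edge deletion can drop the chromatic index only if it drops $\Delta$, which forces all $\Delta(G)$-vertices to lie among the two endpoints of the deleted edge. The paper phrases this contrapositively (if the $\Delta$-vertices are not confined to a single edge then no single deletion works), while you argue directly, but the content is identical.
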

\begin{proof}
Let $G$ be a graph of Class 1 with $es_{\chi'}(G)=1$. If $G$ contains more than two vertices of degree $\Delta(G)$ or two non adjacent vertices of degree $\Delta(G)$, then $\Delta(G \setminus e)=\Delta(G)$ for any $e \in E(G)$, and so $\chi'(G \setminus e) \geq \Delta(G \setminus e)=\Delta(G)= \chi'(G)$. Hence, $es_{\chi'}(G)\geq 2$, a contradiction.
\end{proof}

Next, we characterize graphs $G$ with $\es(G)=1$ among all connected regular graphs, showing that there are just two such families.

\begin{theorem}
\label{thm:reg1}
If $G$ is a connected regular graph, then $es_{\chi'}(G)=1$ if and only if $G$ is isomorphic to $K_{2}$ or $C_n$ with odd $n$.
\end{theorem}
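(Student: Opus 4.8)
The plan is to prove both directions, starting with the easy one. For the reverse direction, I would note that $K_2$ has $\chi'(K_2)=1$ and deleting its single edge gives the empty graph, for which the chromatic index is $0$ (or, if one insists on nontrivial graphs, the statement $\es(K_2)=1$ follows from Lemma~\ref{l:bound} since $\lfloor 2/2\rfloor=1$); for odd cycles $C_n$ we have $\chi'(C_n)=3$ while deleting one edge yields a path $P_n$ with $\chi'(P_n)=2$, so $\es(C_n)=1$. This was in fact already recorded in Section~\ref{sec0}.

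For the forward direction, let $G$ be a connected $r$-regular graph with $\es(G)=1$. The main case split is on the class of $G$. If $G$ is of Class $1$, then by Corollary~\ref{cor:regClass1} (or Lemma~\ref{l:evenClass1}) we have $\es(G)=\frac n2$, so $\es(G)=1$ forces $n\le 2$; since $G$ is connected with at least one edge, $G\cong K_2$. If $G$ is of Class $2$, then $\chi'(G)=r+1$, and by Lemma~\ref{l:singelton} there is a proper $(r+1)$-edge coloring $c$ with a singleton color class, say $C_{r+1}=\{e\}$ with $e=xy$. The key observation is then that $G\setminus e$ is properly $r$-edge colored, so it is of Class $1$ and every vertex other than $x,y$ still has degree $r$ and hence sees all $r$ colors $c_1,\dots,c_r$, while $x$ and $y$ each miss exactly one of these colors. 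Since $c$ is proper on all of $G$ and $e$ gets the unique color $c_{r+1}$, the color missing at $x$ in $G\setminus e$ must be distinct from the color missing at $y$ (otherwise we could recolor $e$ with that common missing color and get a proper $r$-edge coloring of $G$, contradicting Class $2$); say $x$ misses $c_1$ and $y$ misses $c_2$.

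Now I would run a Kempe-chain argument in $G\setminus e$ (using Remark~\ref{r:path}). Consider the $(c_1,c_2)$-subgraph of $G\setminus e$: every vertex except $x$ and $y$ has degree exactly $2$ in it (it sees both colors), while $x$ and $y$ have degree $1$. Hence this subgraph is a disjoint union of even cycles together with exactly one path $Q$ whose endpoints are $x$ and $y$. The parity punchline: $Q$ alternates $c_1,c_2$ and its two end-edges are, respectively, the $c_2$-edge at $x$ and the $c_1$-edge at $y$ (since $x$ misses $c_1$ and $y$ misses $c_2$), so $Q$ has odd length, and therefore $Q+e$ is an odd cycle in $G$. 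If $V(G)$ is exactly $V(Q)$, then $G$, being $r$-regular and containing this spanning odd cycle, must have $r=2$ and $G=C_n$ with $n$ odd, as desired. So the remaining task is to rule out any vertex outside $Q$: I would argue that if some vertex $z\notin V(Q)$ exists, then (using connectivity) one can find an edge from $V(Q)$ to the rest, and by swapping colors $c_1\leftrightarrow c_2$ along $Q$ in $G\setminus e$ we change which colors are missing at $x$ and $y$ — after the swap $x$ misses $c_2$ and $y$ misses $c_1$, but now the $(c_1,c_2)$-component structure is unchanged, so this alone does not finish it; instead I would free up a color. Concretely, after swapping, $x$ and $y$ now both miss the \emph{same} behaviour is not achieved, so the cleaner route is: pick the color $c_1$ missing at $x$; since $G$ is Class $2$ and $r$-regular, recoloring $e$ is blocked, meaning for every color $\alpha\in[r]$ either $x$ or $y$ already uses $\alpha$ — but $x$ misses $c_1$, so $y$ uses $c_1$, and symmetrically $x$ uses $c_2$; this is exactly the situation above and pins down that $x$ misses only $c_1$ and $y$ misses only $c_2$. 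Thus the $(c_1,c_2)$-path $Q$ from $x$ to $y$ together with $e$ is the \emph{only} obstruction, and to kill outside vertices I would instead observe that for any other missing-color pair we would get another odd cycle through $e$, and combining two such odd cycles through the single edge $e$ produces an even closed walk avoiding $e$ whose edges lie in $G\setminus e$; pushing on this, or more simply invoking that $G\setminus e$ is Class $1$ with exactly two non-full-degree vertices $x,y$ that are the ends of a Hamilton-type alternating path, forces $G\setminus e=P_n$ and hence $r=2$.

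The main obstacle I anticipate is precisely that last point: cleanly showing the odd $(c_1,c_2)$-path must be spanning (equivalently, that no vertex lies outside it), i.e.\ that a Class $2$ regular graph with a singleton color class cannot have $r\ge 3$. I expect the tightest argument to combine Vizing's Adjacency Lemma (Lemma~\ref{VizingAdj}) applied to a critical Class $2$ subgraph — forcing many maximum-degree neighbours and contradicting the regular structure once we know almost all of $G$ is a single odd cycle — or alternatively a direct counting argument: $|E(G)|=rn/2=(r+1)\cdot 1 + \sum_{i=1}^{r}|C_i|\le (r+1)+r\lfloor n/2\rfloor$, which for $n$ odd gives $rn/2\le (r+1)+r(n-1)/2$, i.e.\ $r/2\le r+1$, a vacuous bound — so pure counting is insufficient and the Kempe-chain/parity argument above, carefully closed, is the essential content. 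Once $G$ is pinned to a spanning odd cycle plus possible chords, $r$-regularity with $r\ge 3$ is contradicted because an odd cycle on $n$ vertices plus a perfect matching has odd total degree sum issues or creates a Class $1$ core, so I would finish by Lemma~\ref{Fournier} or Lemma~\ref{Akbari} showing any such $G$ would be Class $1$, a contradiction, leaving only $r=2$, $G=C_n$ with $n$ odd.
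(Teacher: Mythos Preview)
Your Class~$1$ case is fine. The Class~$2$ case, however, has a genuine gap --- and it is largely self-inflicted by an arithmetic slip. You wrote $|E(G)| = (r+1)\cdot 1 + \sum_{i=1}^r |C_i|$, but the singleton color class contributes $1$ to the edge count, not $r+1$; there are $r+1$ color classes in total, one of size $1$, so correctly $|E(G)| = 1 + \sum_{i=1}^r |C_i|$. With this fix, for $n$ odd you get $\frac{rn}{2} \le 1 + r\cdot\frac{n-1}{2}$, hence $r \le 2$, and a connected $2$-regular Class~$2$ graph is an odd cycle --- done. This is exactly the paper's argument for odd $n$; the Kempe-chain detour was unnecessary. (Incidentally, your parity claim there is off: the $(c_1,c_2)$-path $Q$ has \emph{even} length since its end-edges carry different colours, so it is $Q+e$ that is odd; your ``therefore'' is backwards.)

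What you do not handle at all is the even-$n$ Class~$2$ subcase, where counting alone really is vacuous. The paper observes that the $r$ non-singleton classes must sum to $\frac{rn}{2}-1$ with each at most $\frac{n}{2}$, forcing $r-1$ of them to be perfect matchings. Removing those $r-1$ disjoint perfect matchings from an $r$-regular graph leaves a $1$-regular graph, i.e.\ another perfect matching, so $\chi'(G)=r$, contradicting Class~$2$. Your Kempe-chain sketch never reaches this, and the closing appeal to Lemma~\ref{Fournier} or Lemma~\ref{Akbari} cannot apply: $G$ is regular, so $Core(G)=G$, which is neither a forest nor a tree/unicyclic configuration for $r\ge 3$.
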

\begin{proof}
Let $G$ be a connected regular graph of order $n$ with $\es(G)=1$. Hence it follows from Lemma~\ref{l:singelton} that there exists a proper $\chi'(G)$-edge coloring $c$ of the edges of $G$ with a singleton color class $C_1$. Suppose first that $\chi'(G)=\Delta(G) + 1$.  Hence $\vert C_{1}\vert+\cdots+\vert C_{\Delta(G)+1}\vert=\frac{n\Delta(G)}{2}$ and $|C_i| \leq \frac{n}{2}$ for any $i \in \{2,\ldots , \Delta(G)+1\}$. If $n$ is even, (since $\vert C_{1} \vert =1 $) the equality holds if and only if there exists $i \in \{2,\ldots , \Delta(G)+1\}$ such that $\vert C_{i}\vert=\frac{n}{2}-1$ and $\vert C_j \vert=\frac{n}{2}$ for any $j\in \{2,\ldots , \Delta(G)+1\}\setminus\{i\}$. So, we have $\Delta(G)-1$ disjoint perfect matchings. Since $G$ is $\Delta(G)$-regular, $\chi'(G)=\Delta(G)$, a contradiction.
If $n$ is odd, then for every $i\in \{2,\ldots ,\Delta(G)+1\}$, $\vert C_{i}\vert\leq\frac{n-1}{2}$. Hence, $\frac{n\Delta(G)}{2}=\vert C_{1}\vert+\cdots+\vert C_{\Delta(G)+1}\vert \leq 1+\frac{n-1}{2}+\cdots+\frac{n-1}{2}$ which holds when $\Delta(G)\leq 2$. We infer that $G$ is isomorphic to an odd cycle. Now, by Lemma~\ref{prp:class1} the only regular graph $G$ of Class 1 with $es_{\chi'}(G)=1$ is $K_{2}$, which completes the proof, since the converse is clear. 
\end{proof}

In the rest of this section, we focus on the computational complexity of determining whether a given graph achieves one of the extremal values.

For a regular graph $G$ of even order Theorem~\ref{t:mainExtreme} implies that $\es(G)=\lfloor\frac {n}{2}\rfloor$ if and only if $G$ is of Class $1$.
Note that $3$-regular graphs are necessarily of even order. Hence we can combine this with Theorem~\ref{thm:holyer} about NP-completeness of determining whether a $3$-regular graph is of Class 1 or not. Notably, if the problem of determining whether a graph $G$ has $\es(G)=\lfloor\frac {n}{2}\rfloor$ is polynomial, this would imply that there is a polynomial algorithm to determine whether a $3$-regular graph is of Class 1. We derive the following result.

\begin{corollary}
Given a graph $G$, it is NP-hard to determine whether $\es(G)=\lfloor\frac {n}{2}\rfloor$, even if $G$ is restricted to $3$-regular graphs. 
\end{corollary}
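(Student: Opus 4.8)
The plan is to reduce the Class~1 recognition problem for $3$-regular graphs, which is NP-complete by Theorem~\ref{thm:holyer}, to the problem of deciding $\es(G)=\lfloor n/2\rfloor$. The key observation is already spelled out in the paragraph preceding the statement: for a regular graph $G$ of \emph{even} order, Theorem~\ref{t:mainExtreme} tells us exactly when $\es(G)=\lfloor n/2\rfloor$. Indeed, items~(2) and~(3) of Theorem~\ref{t:mainExtreme} cover the regular case: a regular graph of even order has $\es(G)=n/2$ if and only if it is of Class~1 (item~(2) applies directly; item~(1) requires odd order; item~(3) also requires odd order, so neither of those interferes). Conversely, a Class~1 regular graph of even order does attain the bound by Corollary~\ref{cor:regClass1}.

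Now I would carry out the reduction. First I note that every $3$-regular graph has even order, since $\sum_v d_G(v)=3n$ must be even. Given an arbitrary $3$-regular graph $G$ on $n$ vertices, I feed $G$ unchanged as the instance of the $\es$-problem. By the equivalence above, $\es(G)=\lfloor n/2\rfloor=n/2$ holds precisely when $G$ is of Class~1, i.e.\ $\chi'(G)=3$. Thus any (hypothetical) polynomial-time algorithm deciding whether $\es(G)=\lfloor n/2\rfloor$ on $3$-regular inputs would immediately decide, in polynomial time, whether a $3$-regular graph has chromatic index $3$ or $4$, contradicting Theorem~\ref{thm:holyer} (unless $\mathrm{P}=\mathrm{NP}$). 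Hence the $\es$-problem is NP-hard even when restricted to $3$-regular graphs.

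There is essentially no obstacle here: the whole content of the corollary is the translation of Theorem~\ref{t:mainExtreme} into a complexity statement, and the only thing one must be slightly careful about is that the characterization in Theorem~\ref{t:mainExtreme} is an ``if and only if,'' so that the reduction goes through in both directions (a yes-instance of the $\es$-problem corresponds exactly to a yes-instance of the Class~1 problem, and likewise for no-instances). It is also worth remarking explicitly that the reduction is trivial in the sense that the identity map on graphs serves as the reduction function, which is clearly polynomial-time computable, so the formal requirements of a polynomial-time many-one (Karp) reduction are met. The ``hard part'', if any, was already done in proving Theorem~\ref{t:mainExtreme}; the corollary merely invokes it alongside Holyer's theorem.
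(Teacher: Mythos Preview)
Your proof is correct and follows essentially the same approach as the paper: both use the fact that $3$-regular graphs have even order, invoke Theorem~\ref{t:mainExtreme} to conclude that for such graphs $\es(G)=n/2$ if and only if $G$ is of Class~1, and then appeal to Holyer's theorem. The paper's argument is terser but the logical content is identical.
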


We prove a similar result for graphs with the chromatic edge stability index equal to $1$, but need to use some additional arguments. In particular, we will use the result of Fournier, Lemma~\ref{Fournier}, that if $Core(G)$ is a forest, then $G$ is of Class 1.


\begin{lemma}
\label{lem:regularNP}
Let $G \neq K_n$ be an $r$-regular graph of order $n$ and let $G'$ be obtained from $G$ by adding an arbitrary edge that is not in $E(G)$. Then
$\es(G')=1$ if and only if $G$ is of Class $1$.
\end{lemma}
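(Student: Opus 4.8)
The plan is to analyze the structure of $G'$, which is $r$-regular except for two vertices $x$ and $y$ (the endpoints of the added edge $xy$) having degree $r+1$; hence $\Delta(G') = r+1$ and $x,y$ are the only vertices of maximum degree, and they are adjacent. So $Core(G')$ is a single edge, which is a forest, and by Lemma~\ref{Fournier} $G'$ is of Class~$1$, i.e. $\chi'(G') = r+1$. Therefore proving $\es(G')=1$ is equivalent to proving $\chi'(G' \setminus e) = r$ for some edge $e$, and the natural choice is $e = xy$, since $G' \setminus xy = G$.

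For the ``if'' direction, suppose $G$ is of Class~$1$, so $\chi'(G) = \Delta(G) = r$. Then $\chi'(G' \setminus xy) = \chi'(G) = r = \chi'(G') - 1$, so deleting the single edge $xy$ is a mitigating set and $\es(G') = 1$. For the ``only if'' direction, suppose $\es(G') = 1$; I want to conclude $G$ is of Class~$1$. By Lemma~\ref{l:singelton}, there is a proper $(r+1)$-edge coloring $c$ of $G'$ with a singleton color class, say $C_1 = \{e\}$. Then $G' \setminus e$ is properly $r$-edge-colored, so $\chi'(G' \setminus e) \le r$. If $e = xy$, then $G' \setminus e = G$ is $r$-regular, forcing $\chi'(G) = r$, i.e. $G$ is of Class~$1$, and we are done. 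The remaining case is $e \neq xy$; here $G' \setminus e$ still contains both vertices $x,y$ of degree $r+1$ (unless $e$ is incident to $x$ or $y$), so one must argue more carefully.

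The main obstacle is thus the case $e \neq xy$ in the ``only if'' direction: I must rule out (or otherwise handle) the possibility that the unique singleton color class uses an edge other than the added one. The key observation is that in any proper $(r+1)$-edge-coloring of $G'$, consider the colors at $x$ and at $y$. Each of $x$ and $y$ sees $r+1$ distinct colors, i.e. all $r+1$ colors appear at $x$ and all appear at $y$; in particular the color of $xy$ is the only common... no — rather, every vertex of degree $r$ misses exactly one color. I plan to use a Kempe-chain/recoloring argument: given the coloring $c$ with singleton class $\{e\}$ where $e \ne xy$, I will recolor so as to move the singleton onto $xy$, or else directly exhibit a proper $r$-edge-coloring of $G$. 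Concretely, if the color $c(xy) = \alpha$ is used on $\ge 2$ edges, pick another edge $f \ne xy$ with $c(f) = \alpha$; the edge $e$ has some unique color $\beta$. I would swap along a suitable $(\alpha,\beta)$-alternating path to reduce the number of edges colored $\beta$ while not increasing... Actually the cleanest route: since $x$ and $y$ have degree $r+1$ and there are $r+1$ colors, the edges at $x$ are a rainbow and likewise at $y$; remove $xy$ to get $G$, which then has a proper edge coloring using the colors $[r+1] \setminus \{\gamma\}$ at $x$ restricted... Let me instead argue that from $c$ we build a proper $r$-edge-coloring of $G$ directly: delete $e$ (freeing color $\beta$), then we have a proper $r$-edge-coloring of $G' \setminus e$ with color set $[r+1]\setminus\{\beta\}$; now $G' \setminus e$ contains $G$ minus $e$... this still does not immediately give $G$. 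I expect the resolution is that one shows a singleton color class of a Class~1 coloring of $G'$ can always be chosen to be $\{xy\}$ — equivalently, starting from any $(r+1)$-coloring with singleton $\{e\}$, perform a $(c(e), c(xy))$-Kempe swap on the component containing $e$ (or containing $xy$) to transfer the singleton status to $xy$ — and then finish as in the easy case. Carrying out this Kempe argument rigorously, tracking that the swap keeps the class of $xy$ a singleton, is the technical heart of the proof.
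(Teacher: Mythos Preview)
Your setup and the ``if'' direction are fine and match the paper. The gap is in the ``only if'' direction, where you overcomplicate matters and leave the argument incomplete. You correctly note that if $e$ is incident to neither $x$ nor $y$ then $\Delta(G'\setminus e)=r+1$, contradicting $\chi'(G'\setminus e)=r$. But the same trivial observation finishes the remaining case too: if $e\neq xy$ then $e$ is incident to \emph{at most one} of $x,y$, so the other one still has degree $r+1$ in $G'\setminus e$, whence $\Delta(G'\setminus e)=r+1>r\ge\chi'(G'\setminus e)$, a contradiction. Hence the only edge whose removal can drop $\chi'$ is $xy$, and then $G'\setminus xy=G$ has $\chi'(G)=r=\Delta(G)$, so $G$ is of Class~1. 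This is exactly the paper's argument, in two lines.

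Your proposed Kempe-chain recoloring to transfer the singleton class onto $xy$ is both unnecessary and, as stated, not obviously correct: a $(c(e),c(xy))$-swap on the component containing $xy$ need not leave the color class of $xy$ a singleton (the component could contain several edges of color $c(xy)$ besides $xy$ itself, and after the swap they all acquire the formerly-singleton color). So the ``technical heart'' you anticipate does not even need to beat --- the degree-count argument above suffices.
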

\begin{proof}
We start by proving the following claim.

\begin{claim}\label{G'class1}
$\chi'(G')=\Delta(G)+1$.
\end{claim}
\begin{claimproof}
Since $\Delta(G')=\Delta(G)+1$ and exactly two vertices of $G'$ have degree $\Delta(G')$, $Core(G')$ is a forest. Hence by Lemma~\ref{Fournier}, $G'$ is of Class 1 and $\chi'(G')=\Delta(G')=\Delta(G)+1$.
\end{claimproof}

Suppose first that $\es(G')=1$. Hence there exists $e \in E(G')$ such that $\chi'(G' \setminus e)=\chi'(G')-1=\Delta(G)$. Hence  $\Delta(G'\setminus e) \leq \Delta(G)=\Delta(G')-1$. Since with the removal of one edge, the maximum degree can decrease by at most one, $\Delta(G' \setminus e)=\Delta(G')-1 = \Delta(G)$ and hence $e=xy$, where $xy$ is the edge in $E(G')\setminus E(G)$. Consequently $G' \setminus e=G$. Hence  $\chi'(G)=\chi'(G' \setminus e)=\chi'(G')-1=\Delta(G)$ and thus $G$ is of Class 1.

For the converse, suppose that $\chi'(G)=\Delta(G)$. Since by Claim~\ref{G'class1}, $\chi'(G')=\Delta(G')=\Delta(G)+1$, $F=\{xy\}$ is a minimum mitigating set of $G'$ and hence $\es(G')=1$.
\end{proof}

By Lemma~\ref{lem:regularNP}, we infer that the existence of a polynomial algorithm to determine whether a graph $G$ has $\es(G)=1$ would imply that one can determine by polynomial algorithm whether a regular graph is of Class 1. We again invoke the result of Holyer, Theorem~\ref{thm:holyer}, and derive the following. 

\begin{corollary}
Given a graph $G$, it is NP-hard to determine whether $\es(G)=1$, even if $G$ is restricted to graphs with $\Delta(G)=4$ and $Core(G)=K_2$. 
\end{corollary}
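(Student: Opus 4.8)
The plan is to reduce from Holyer's problem (Theorem~\ref{thm:holyer}), namely deciding whether a given $3$-regular graph has chromatic index $3$ or $4$, using the gadget supplied by Lemma~\ref{lem:regularNP}. Given a $3$-regular graph $G$ as input, I would first dispose of the trivial case $G\cong K_4$: here $G$ is of Class $1$, so the answer to Holyer's question is already known, and excising this single polynomially recognizable instance does not affect NP-hardness. Thus we may assume $G\not\cong K_4$, and since $K_4$ is the only $3$-regular complete graph, this means $G\not\cong K_n$ for every $n$, so Lemma~\ref{lem:regularNP} is applicable.

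Next I would construct, in polynomial time, the graph $G'$ obtained from $G$ by adding one edge $xy$ with $xy\notin E(G)$; such an edge exists because $G$ is not complete. Since $G$ is $3$-regular, we get $d_{G'}(x)=d_{G'}(y)=4$ while every other vertex still has degree $3$. Hence $\Delta(G')=4$, the vertices of maximum degree in $G'$ are exactly $x$ and $y$, and they are adjacent in $G'$, so $Core(G')=K_2$. Therefore every instance $G'$ produced by the reduction lies in the restricted class named in the statement.

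Finally, Lemma~\ref{lem:regularNP} (with $r=3$) gives $\es(G')=1$ if and only if $G$ is of Class $1$. Consequently, a polynomial-time algorithm deciding whether $\es(G')=1$ would decide Holyer's problem in polynomial time; by Theorem~\ref{thm:holyer} no such algorithm exists unless $\mathrm{P}=\mathrm{NP}$. Since all constructed instances satisfy $\Delta(G')=4$ and $Core(G')=K_2$, the claimed NP-hardness holds even under this restriction.

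I do not expect a genuine obstacle: the substance is already packaged in Lemma~\ref{lem:regularNP} together with Holyer's theorem. The only points requiring care are checking that the reduction is polynomial and verifying the degree bookkeeping that forces $Core(G')=K_2$, so that the hard instances really do fall in the advertised restricted family.
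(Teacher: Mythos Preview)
Your proposal is correct and follows exactly the paper's approach: reduce from Holyer's theorem via Lemma~\ref{lem:regularNP}, noting that for a $3$-regular $G\not\cong K_4$ the graph $G'$ has $\Delta(G')=4$ and $Core(G')=K_2$. The paper's own argument is in fact terser than yours, omitting the explicit $K_4$ disposal and the degree bookkeeping you carefully spell out.
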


\section{Graphs with $es_{\chi'}(G)= 2$ }\label{sec:2}

In this section, we prove that Conjecture~\ref{conj1} is true for graphs with the chromatic edge stability index equal to $2$. We start by showing that in $r$-regular graphs, where $r\ne 4$, the only minimum mitigating sets are $2$-matchings.

\begin{theorem}
If $G$ is an $r$-regular graph, where $r\neq 4$, and $es_{\chi'}(G)=2$, then every minimum mitigating set of size $2$ is a $2$-matching of $G$.
\end{theorem}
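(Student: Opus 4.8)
The plan is to argue by contradiction. Suppose $F=\{e,f\}$ is a minimum mitigating set of $G$ (so $|F|=2$) with $e,f$ adjacent, and write $e=uv$, $f=vw$ where $u,v,w$ are pairwise distinct since $G$ is simple. I would first record that $n\ge 4$: if $n\le 3$ then $G\in\{K_2,K_3\}$, for which $\es(G)=1\neq 2$. Since the vertices outside $\{u,v,w\}$ keep degree $r$ in $G\setminus F$, we have $\Delta(G\setminus F)=r$ and hence $\chi'(G\setminus F)\ge r$; combined with $\chi'(G\setminus F)<\chi'(G)\le r+1$ (Theorem~\ref{thm:vising}), this forces $G$ to be of Class $2$ (so $\chi'(G)=r+1$) and $\chi'(G\setminus F)=r$. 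Fix a proper $r$-edge coloring $c$ of $G\setminus\{uv,vw\}$.

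The central step is a count of missing colors. For a vertex $x$, let $M_x$ be the set of colors of $c$ not appearing at $x$. Since the degree sum of $G\setminus\{uv,vw\}$ is $rn-4$, we get $\sum_x |M_x|=4$; together with $|M_v|\ge 2$ and $|M_u|,|M_w|\ge 1$ this forces $|M_v|=2$, $|M_u|=|M_w|=1$, and $M_x=\emptyset$ for every $x\notin\{u,v,w\}$. Write $M_v=\{\alpha,\beta\}$, $M_u=\{\gamma\}$, $M_w=\{\delta\}$. In particular every color class of $c$ is a matching that covers all vertices outside $\{u,v,w\}$, and it is this very rigid structure that the remaining cases exploit.

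The case analysis runs on how $\gamma$ and $\delta$ sit relative to $\{\alpha,\beta\}$. If $\gamma\in\{\alpha,\beta\}$ (the case $\delta\in\{\alpha,\beta\}$ being symmetric), then coloring the edge $uv$ with $\gamma$ extends $c$ to a proper $r$-edge coloring of $G\setminus vw$; since $\chi'(G\setminus vw)\ge\Delta(G\setminus vw)=r$ we obtain $\chi'(G\setminus vw)=r=\chi'(G)-1$, so $\es(G)=1$, a contradiction. So assume $\gamma,\delta\notin\{\alpha,\beta\}$. If $\gamma=\delta$, consider the subgraph $H$ of $G\setminus\{uv,vw\}$ on the edges colored $\alpha$ or $\gamma$: every vertex has degree at most $2$ in $H$, each of $u,v,w$ has degree exactly $1$ (being missing exactly one of $\alpha,\gamma$), and every other vertex has degree $2$; thus $H$ has exactly three vertices of odd degree, contradicting the handshaking lemma. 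If instead $\gamma\neq\delta$, then $\alpha,\beta,\gamma,\delta$ are four distinct colors, so $r\ge 4$, and because $r\neq 4$ there is a fifth color $\zeta$; the class $C_\zeta$ then covers all $n$ vertices while $C_\alpha$ covers exactly the $n-1$ vertices other than $v$, forcing $n$ to be both even and odd, a contradiction. This exhausts all cases, so $e$ and $f$ are non-adjacent and $F$ is a $2$-matching.

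I expect the genuinely delicate point to be the last case: the hypothesis $r\neq 4$ is used only there, to produce a color outside $\{\alpha,\beta,\gamma,\delta\}$, and when $r=4$ the configuration ``a pair of colors missing at $v$, disjoint from the single colors missing at $u$ and at $w$'' is not excluded by this counting, which is presumably exactly why the statement omits $r=4$. The bookkeeping for small $n$ and for disconnected $G$ needs a line of care but nothing deeper, since every degree count and every matching-parity argument above is insensitive to connectedness.
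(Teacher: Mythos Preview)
Your proof is correct and takes a genuinely different route from the paper's. The paper first disposes of the Class~1 case using Corollary~\ref{cor:regClass1} (which forces $n=4$) and then, for Class~2, splits according to the parity of $n$: for odd $n$ a global count on color-class sizes gives $r\le 4$, hence $r=2$, and a cycle-parity argument finishes; for even $n$ the possible size distributions of the color classes are listed and each is ruled out (one by producing a proper $r$-coloring of $G$, the other by observing the two uncovered edges are independent). Your argument instead derives Class~2 directly from the existence of an adjacent mitigating pair, then works locally via the missing-color vector $(M_x)_{x\in V}$: the case $\gamma\in\{\alpha,\beta\}$ corresponds to the paper's ``extend to a proper $r$-coloring'' step, while your last two cases replace the paper's parity split by a Kempe-chain handshaking argument and a clean matching-parity contradiction (the class $C_\zeta$ forcing $n$ even, the class $C_\alpha$ forcing $n$ odd). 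What your approach buys is uniformity---no upfront case on $n$ and no separate treatment of Class~1---and it isolates very crisply where $r\neq 4$ enters (to obtain a fifth color). The paper's approach, on the other hand, stays closer to the balanced-coloring viewpoint and makes the color-class sizes explicit, which connects more visibly to the arguments used elsewhere in the paper.
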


\begin{proof}
If $G$ is of Class 1, then by Corollary~\ref{cor:regClass1} we have $n=4$. Hence $G$ is either $K_{4}$ or $C_{4}$ or $K_2 \cup K_2$, so the statement of the theorem holds. 

Now, let $G$ be a graph of Class 2. Assume that there exists a set of edges $S=\{xy,xz\}$ such that $\chi'(G \setminus S)=\chi'(G)-1=r$. Let $H=G \setminus S$ and $C_{1},\ldots , C_{r}$ be the color classes in a proper $r$-edge coloring of $H$. We distinguish two cases with respect to the parity of $n$.\\

\noindent {\bf{Case 1.}} If $n$ is odd, then for every $i\in [r]$ we have $\vert C_{i}\vert\leq\frac{n-1}{2}$. Hence, $\frac{nr}{2}=|E(G)|=2+|E(H)|\le 2+r(\frac{n-1}{2})$, and so $r\leq 4$. As $r\neq 4$ and $n$ is odd, the only possible value for $r$ is 2. Hence, $G$ is a disjoint union of cycles and since $es_{\chi'}(G)=2$, exactly two of them are odd. Therefore $G$ has even number of vertices, a contradiction.\\

\noindent {\bf{Case 2.}} If $n$ is even, then for every $i\in [r]$ we have $\vert C_{i}\vert\leq\frac{n}{2}$. Since $\vert E(H)\vert=\frac{nr}{2}-2$, the only possible sizes of the color classes of $H$ are $\vert C_{1}\vert=\frac{n}{2}-2$ and $\vert C_{i}\vert =\frac{n}{2}$ for $i\in\{2,\ldots,r\}$, or $\vert C_{1}\vert=\vert C_{2}\vert=\frac{n}{2}-1$ and $\vert C_{i}\vert =\frac{n}{2}$ for $i\in\{3,\ldots,r\}$. If $\vert C_{1}\vert=\frac{n}{2}-2$, then the edges of $G$ which are not in $H$ are two disjoint edges, a contradiction. Now, let $\vert C_{1}\vert=\vert C_{2}\vert=\frac{n}{2}-1$. Then each $C_{i}$, where $i\in\{3,\ldots,r\}$, is an $\frac{n}{2}$-matching and each of the colors $c_{1}$ and $c_{2}$ does not appear in exactly two vertices. As $d_{H}(x)=r-2$ and $d_{H}(y)=r-1$ and $d_{H}(z)=r-1$, with no loss of generality, $c_{1}$ does not appear in two vertices $x$ and $y$ and $c_{2}$ does not appear in two vertices $x$ and $z$. Now, add two edges $xy$ and $xz$ to $H$ and color $xy$ and $xz$ by $c_{1}$ and $c_{2}$, respectively, and keep the colors of other edges. This is obviously a proper $r$-edge coloring of $G$, which means $G$ is of Class 1, a contradiction.
\end{proof}

\begin{remark}
For every odd integer $n=2t+1\geq 5$, there exists a $4$-regular graph $G$ of order $n$ such that $\es(G)=2$ with a minimum mitigating set which is not a $2$-matching.
\end{remark}

To see this, note by~\cite[Theorem 9.21]{chart-2005} that $E(K_{2t+1})$ can be decomposed into $t$ Hamiltonian cycles. Consider the union of $2$ Hamiltonian cycles of $K_n$ and call this graph by $G_n$. Since $G_n$ is a 4-regular graph of odd order, $\chi'(G_n)=5$. If $v \in V(G_n)$ and $e$ and $e'$ are two edges incident with $v$ and they are contained in two different Hamiltonian cycles, then clearly, $\chi'(G_n \setminus \{e,e'\})=4$, because the edge set of $G_n \setminus \{e,e'\}$ can be decomposed into two Hamiltonian paths.

\begin{lemma}\label{l:mitigating2}
Let $G$ be a graph with $es_{\chi'}(G)=2$ and $S=\{xy,xz\}$ be a minimum mitigating set. Then there exists a minimum mitigating set which is a $2$-matching if one of the following holds:
\begin{enumerate}[(i)]
\item $yz\notin E(G)$.
\item $yz\in E(G)$ and there exists a proper $(\chi'(G)-1)$-edge coloring $c$ of $G \setminus S$, such that there is a color $c_{1}$ with $c_{1}\notin c(x)$ and $c(yz)\neq c_{1}$.
\end{enumerate}
\end{lemma}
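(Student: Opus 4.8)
Write $k=\chi'(G)-1$ and $H=G\setminus S$, so that $\chi'(H)=k$; fix a proper $k$-edge-colouring $c$ of $H$ — in case~(ii) the one supplied by the hypothesis. Since $\es(G)=2$, deleting a single edge never suffices, so $\chi'(G\setminus xy)=\chi'(G\setminus xz)=k+1$. The proof rests on the following observation about $c$: writing $A$ for the set of colours missing at $x$, we have $A\neq\varnothing$ and no colour of $A$ is missing at $y$, nor at $z$. Indeed $d_H(x)=d_G(x)-2\le\Delta(G)-2<k$ gives $A\neq\varnothing$; and if some $a\in A$ were missing at $y$, then colouring $xy$ with $a$ would yield a proper $k$-edge-colouring of $H+xy=G\setminus xz$, contradicting $\chi'(G\setminus xz)=k+1$ (symmetrically for $z$). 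Since moreover $d_G(x)\le\Delta(G)\le\chi'(G)$, one has $|A|=\chi'(G)+1-d_G(x)\ge 1$, with $|A|\ge 2$ unless $d_G(x)=\Delta(G)=\chi'(G)$; in particular $|A|\ge 2$ whenever $G$ is of Class~$2$.

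Assume first $|A|\ge 2$. The idea is to delete two edges of $H$ and reinsert $xy$ and $xz$ coloured by two distinct colours $a,b\in A$: for this to be proper at $y$ and $z$ we must delete the unique $a$-coloured edge $e_y$ at $y$ and the unique $b$-coloured edge $e_z$ at $z$ (these exist since $a,b$ are present at both $y$ and $z$), after which $xy\mapsto a$, $xz\mapsto b$ is a proper $k$-edge-colouring of $(H\setminus\{e_y,e_z\})+\{xy,xz\}=G\setminus\{e_y,e_z\}$. It then remains to choose $a,b$ so that $M=\{e_y,e_z\}$ is a $2$-matching. Neither edge is incident with $x$ (else it would be $xy$ or $xz\notin E(H)$); $e_y$ is not the edge $yz$ provided $a$ differs from the colour of $yz$ (vacuous in case~(i), and in case~(ii) achieved by taking $a=c_1$), and similarly for $e_z$; and the last possibility, that $e_y$ and $e_z$ share their other endpoint, is avoided for a suitable choice when $|A|\ge 3$ (a fixed vertex cannot be the $a$-neighbour of $y$ for two colours $a$), while for $|A|=2$ the obstructions reduce to a short list of explicit configurations around $y$ and $z$ (notably a $4$-cycle alternating in the two colours of $A$, or the edge $yz$ carrying a colour of $A$), each of which is removed by a bounded local recolouring along short alternating walks, in the spirit of Remark~\ref{r:path}. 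In every subcase this produces a minimum mitigating set that is a $2$-matching.

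The remaining and hardest case is $|A|=1$, which forces $d_G(x)=\Delta(G)=\chi'(G)$, hence $G$ of Class~$1$, and — since $\chi'(H)=k<\Delta(G)$ — every vertex of maximum degree of $G$ lies in $\{x,y,z\}$. Now $x$ has no spare colour, so the construction above fails (indeed $G\setminus\{e_y,e_z\}$ would still contain the maximum-degree vertex $x$, forcing chromatic index $>k$), and instead one must produce a $2$-matching that meets $x$. I would apply Lemma~\ref{l:singelton} to $G\setminus xz$, which has chromatic edge stability index $1$ (deleting $xy$ works), to obtain a proper $(k+1)$-edge-colouring of $G\setminus xz$ with a singleton colour class $\{f\}$; then $\{xz,f\}$ is a minimum mitigating set of $G$, and it is a $2$-matching provided $f$ is incident with neither $x$ nor $z$. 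Securing such an $f$ is the crux of this case: I expect it to follow from a short analysis of $Core(G\setminus xz)$ together with Fournier's Lemma~\ref{Fournier} and the fact that all maximum-degree vertices of $G$ lie in $\{x,y,z\}$, using that $\chi'(G\setminus xz)=\Delta(G\setminus xz)+1$ forces $Core(G\setminus xz)$ to contain a cycle. The main obstacle throughout is therefore the recolouring bookkeeping: controlling the $|A|=2$ configurations in the generic case, and, in the degenerate case $|A|=1$, pinning down the location of the singleton edge $f$.
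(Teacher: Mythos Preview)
Your plan is substantially more complicated than necessary, and the complications you flag (``recolouring bookkeeping'' when $|A|=2$, and the entire $|A|=1$ case) are not in fact obstacles: they arise only because you insist on reinserting \emph{both} $xy$ and $xz$ into $H$, which forces you to find two independent spare colours at $x$ and then worry about whether the two deleted edges $e_y,e_z$ form a matching. The paper's argument avoids all of this by keeping $xz$ in the mitigating set.

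Concretely: take a single colour $c_1\notin c(x)$ (in case~(ii), the one given with $c(yz)\neq c_1$). As you already observed, $c_1$ must appear at $y$, so there is an edge $yy'$ with $c(yy')=c_1$. Now simply delete $yy'$ and colour $xy$ with $c_1$; this gives a proper $k$-colouring of $G\setminus\{yy',xz\}$. The pair $\{yy',xz\}$ is automatically a $2$-matching: $y'\neq x$ since $c_1\notin c(x)$, and $y'\neq z$ since in case~(i) the edge $yz$ does not exist while in case~(ii) $c(yz)\neq c_1$. That is the whole proof.

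So the gap in your proposal is not that the $|A|=2$ configurations or the $|A|=1$ analysis are intractable, but that you set yourself a harder problem than the lemma requires. By trying to colour both $xy$ and $xz$ you need $|A|\ge 2$ and must control the interaction of $e_y$ and $e_z$; by leaving $xz$ uncoloured and letting it be one of the two edges in the new mitigating set, a single missing colour suffices and the matching condition falls out for free.
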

\begin{proof}
Let $H=G \setminus S$. First assume that $(i)$ holds, that is, $yz \notin E(G)$. Let $c$ be a proper $(\chi'(G)-1)$-edge coloring of $H$. Since $d_{H}(x)\leq\Delta(G)-2$, there exists at least one color $c_{1}$ such that $c_{1}\notin c(x)$. If $c_{1}\notin c(y)$, then we add the edge $xy$ to $H$ and color $xy$ by $c_1$ to obtain $(\chi'(G)-1)$-edge coloring of $G \setminus xz$, a contradiction. So, there exists an edge $yy'$, $y'\neq x$, such that $c(yy')=c_{1}$. Then $c':E(G)\setminus\{xz,yy'\}$ with $c'(e)=c(e)$ for any $e \in  E(H)\setminus yy'$ and $c'(xy)=c_1$ is a proper $(\chi'(G)-1)$-edge coloring of the graph $G \setminus \{yy',xz\}$. Hence $\{yy',xz\}$ is a $2$-matching minimum mitigating set of $G$. 

Now, suppose that $(ii)$ holds, that is, $yz\in E(G)$ and there is a proper $(\chi'(G)-1)$-edge coloring $c$ of $G \setminus S$, with a color $c_{1}$ such that $c_{1}\notin c(x)$ and $c(yz)\neq c_{1}$. If $c_1 \notin c(y)$, then $\chi'(G \setminus xz) < \chi'(G)$, a contradiction. So, there exists an edge $yy'$, $y'\notin \{x,z\}$ such that $c(yy')=c_{1}$. Then there exists a proper $(\chi'(G)-1)$-edge coloring $c'$ of $G \setminus \{yy',xz\}$ such that $c'(e)=c(e)$ for any $e \in E(G) \setminus \{yy',xz, xy\}$ and $c'(xy)=c(yy')=c_1$. Hence $\{yy',xz\}$ is a minimum mitigating set of $G$ which is a $2$-matching. 
\end{proof}

\begin{theorem}\label{th:es2}
Let $G$ be a graph with $es_{\chi'}(G)=2$. Then there exists a $2$-matching mitigating set.
\end{theorem}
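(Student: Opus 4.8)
The plan is to argue by contradiction, so assume that $G$ has no minimum mitigating set which is a $2$-matching. Since $\es(G)=2$, every minimum mitigating set then consists of exactly two adjacent edges; fix one of them, say $S=\{xy,xz\}$. As $S$ is not a $2$-matching, part~$(i)$ of Lemma~\ref{l:mitigating2} forces $yz\in E(G)$, and since no minimum mitigating set is a $2$-matching, part~$(ii)$ of Lemma~\ref{l:mitigating2} must also fail for $S$; hence for \emph{every} proper $(\chi'(G)-1)$-edge coloring $c$ of $G\setminus S$ the unique color not appearing at $x$ is $c(yz)$. Now $x$ is incident to $d_G(x)-2$ edges of $G\setminus S$, carrying $d_G(x)-2$ distinct colors out of $\chi'(G)-1$, so the number of colors missing at $x$ equals $\chi'(G)+1-d_G(x)\ge 1$ (using $d_G(x)\le\Delta(G)\le\chi'(G)$ from Theorem~\ref{thm:vising}); since failure of $(ii)$ forces this number to be at most $1$, we get $d_G(x)=\Delta(G)=\chi'(G)$, so in particular $G$ is of Class~$1$.

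Next I would propagate this to the other two edges of the triangle $xyz$. Fix a proper $(\chi'(G)-1)$-edge coloring $c$ of $G\setminus S$ and put $\alpha=c(yz)$, the unique color missing at $x$. Deleting the edge $yz$ from $G\setminus S$ frees $\alpha$ at both $y$ and $z$, so adding $xz$ colored $\alpha$ produces a proper $(\chi'(G)-1)$-edge coloring of $G\setminus\{xy,yz\}$, and likewise adding $xy$ colored $\alpha$ produces one of $G\setminus\{xz,yz\}$. Thus $\{xy,yz\}$ and $\{xz,yz\}$ are minimum mitigating sets, again of the adjacent-pair type (the remaining triangle edge being present in $G$ in each case), so applying the conclusion of the first paragraph to them, now with centers $y$ and $z$, yields $d_G(y)=d_G(z)=\Delta(G)$. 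Since deleting any minimum mitigating set must drop the maximum degree below $\chi'(G)=\Delta(G)$, the set $S$ covers every maximum-degree vertex; hence the maximum-degree vertices of $G$ are exactly $x,y,z$, i.e.\ $Core(G)$ is the triangle on $\{x,y,z\}$. Finally $\Delta(G)\ge 3$, because $\Delta(G)=d_G(x)\ge 2$ as $x$ lies on both edges of $S$, and if $\Delta(G)=2$ then $\{x,y,z\}$ would span a whole component of $G$ isomorphic to $C_3$, contradicting that $G$ is of Class~$1$.

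It remains to derive a contradiction in this last configuration — $G$ of Class~$1$, $Core(G)=K_3$ on $\{x,y,z\}$, $\chi'(G)=\Delta(G)\ge 3$, $\es(G)=2$ — and this is the heart of the proof. Consider $G\setminus xz$: its only maximum-degree vertex is $y$ (of degree $\Delta(G)$), so by Fournier's Lemma~\ref{Fournier} it is of Class~$1$ with $\chi'(G\setminus xz)=\Delta(G)$; moreover $\es(G\setminus xz)=1$, since $(G\setminus xz)\setminus xy=G\setminus\{xy,xz\}$ is already $(\chi'(G)-1)$-colorable. By Lemma~\ref{l:singelton}, $G\setminus xz$ therefore admits a $\chi'(G)$-edge coloring with a singleton color class $\{e\}$, and since deleting $e$ must drop the maximum degree, $e$ is incident to $y$, say $e=yy'$. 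If $y'\notin\{x,z\}$, then $\{xz,yy'\}$ is a $2$-matching and $\chi'(G\setminus\{xz,yy'\})=\chi'((G\setminus xz)\setminus e)=\Delta(G)-1$, a minimum mitigating $2$-matching — contradiction. The remaining, and main, obstacle is to rule out that for every $\chi'(G)$-edge coloring of $G\setminus xz$ with a singleton color class that class is forced to be $\{xy\}$ or $\{yz\}$ (and symmetrically for $G\setminus xy$ and $G\setminus yz$). I expect this to be handled by Kempe-chain recolorings of $c$ that reroute the surplus color through the triangle $xyz$ and out along an edge at $y$ avoiding $x$ and $z$ — possible precisely because $d_G(x)=d_G(y)=d_G(z)=\Delta(G)$, and supported by the fact (provable by the same ``delete $yz$, re-add $xy$'' trick that would otherwise give $\es(G)=1$) that no color is missing at both $y$ and $z$ in $c$ — combined, in the residual cases, with Lemma~\ref{Akbari} to control the cores produced after the deletions and with Vizing's Adjacency Lemma (Lemma~\ref{VizingAdj}) applied to a critical Class~$2$ subgraph that surfaces after one further edge removal.
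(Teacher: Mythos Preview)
Your reduction to the configuration ``$G$ of Class~$1$, exactly one color $c_1=c(yz)$ missing at $x$ in every $(\chi'(G)-1)$-coloring $c$ of $G\setminus S$'' is correct and matches the paper's Case~2 setup. The additional structure you extract ($d_G(y)=d_G(z)=\Delta(G)$, $Core(G)\cong K_3$, $\Delta(G)\ge 3$) is also valid, though the paper never needs it.

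The gap is exactly where you flag it: the case where the singleton color class in $G\setminus xz$ is forced to be $\{xy\}$ or $\{yz\}$ is not proved. Your closing paragraph is a sketch, not an argument --- phrases like ``I expect this to be handled by'' and the appeal to Lemma~\ref{Akbari} and Vizing's Adjacency Lemma are red herrings. Neither lemma is used in the paper's proof, and there is no Class~$2$ critical subgraph in sight: you have already pinned down $G$ as Class~$1$ with $Core(G)=K_3$, so invoking tools about Class~$2$ criticality has no purchase.

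What actually finishes the proof is a single, concrete Kempe-chain swap carried out directly on the coloring $c$ of $H=G\setminus\{xy,xz\}$, with no detour through $G\setminus xz$ or Lemma~\ref{l:singelton}. Pick any color $c_2\in c(x)$ (possible since $\chi'(H)\ge 2$) and look at the $(c_1,c_2)$-Kempe path $P_x(c_2,c_1)$ in $H$. Because $yz$ is colored $c_1$, this path either contains both $y,z$ or neither. If neither, swap colors along it: now $c_2\notin c(x)$ while $c(yz)=c_1\ne c_2$, and Lemma~\ref{l:mitigating2}(ii) applies. If both, say the path reaches $y$ first and then traverses $yz$, delete $yz$ and insert $xy$ with color $c_1$; this gives a proper $(\chi'(G)-1)$-coloring $c'$ of $G\setminus\{xz,yz\}$ in which the $(c_1,c_2)$-chain from $z$ avoids $x$ and $y$ (it is the tail of the old path past $z$). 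Swapping along that chain (or doing nothing if it is trivial) makes $c_2\notin c'(z)$ while $c'(xy)=c_1\ne c_2$, and Lemma~\ref{l:mitigating2}(ii) applied to $S'=\{xz,yz\}$ finishes. This is the whole argument --- your structural build-up is correct but ultimately unnecessary, and the missing step is this explicit Kempe swap rather than anything involving Lemmas~\ref{Akbari} or~\ref{VizingAdj}.
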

\begin{proof}
Assume that $S=\{xy,xz\}$ is a mitigating set and let $c$ be a proper $(\chi'(G)-1)$-edge coloring of $H=G \setminus S$. We distinguish two cases with respect to the type of $G$:\\

\noindent {\bf{Case 1.}} If $G$ is of Class 2, then $\chi'(H)\le\Delta(G)$ and $d_{H}(x)\leq\Delta(G)-2$. Hence there exist at least two colors $c_{1}$ and $c_{2}$ such that $c_1,c_2 \notin c(x)$. Now, if $H$ contains the edge $yz$ then, $c(yz)\neq c_{1}$ or $c(yz)\neq c_{2}$ and hence we get the result by Lemma~\ref{l:mitigating2}. We also get the result by Lemma~\ref{l:mitigating2} if the edge $yz$ does not exist.\\

\noindent {\bf{Case 2.}} If $G$ is of Class 1, then $\chi'(H)=\Delta(G)-1$. Since $d_{H}(x)\leq\Delta(G)-2$, there exists a color $c_{1} \notin c(x)$ in a proper $(\Delta(G)-1)$-edge coloring $c$. If there exists a color $c_{2}\neq c_{1}$ such that $c_{2}\notin c(x)$, we get the result using Lemma~\ref{l:mitigating2}. Assume that there exists exactly one color $c_{1}$ not appearing in $x$. If $yz \notin E(H)$ or if $c(yz)\neq c_{1}$, we get the result by Lemma~\ref{l:mitigating2}. Hence assume that $yz \in E(H)$ and $c(yz)=c_{1}$. Since the vertices $x, y$ and $z$ induce a $K_{3}$ in $G$, $\chi'(G)\geq 3$ and $\chi'(H)=\chi'(G)-1\geq 2$. Therefore there exists a color $c_{2} \in c(x)$. Now, we consider the path $P_{x}(c_{2},c_{1})$ in $H$. This path passes the vertices $y$ and $z$ or none of them. If $y,z \notin V(P_{x}(c_{2},c_{1}))$, then by Remark~\ref{r:path} we can switch the colors $c_{1}$ and $c_{2}$ in $P_{x}(c_{2},c_{1})$ and keep the color of the other edges to obtain a proper $(\chi'(G)-1)$-edge coloring of $H$ such that $c_1=c(yz)\neq c_{2}$ and $c_{2}\notin c(x)$, so by Lemma~\ref{l:mitigating2} we get the result. Now, assume that $y,z\in V(P_{x}(c_{2},c_{1}))$ and first the path meets the vertex $y$ and then meets the edge $yz$. Let $P$ be the $x,y$-subpath of $P_x(c_2,c_1)$ and let $Q$ be the subpath of $P_x(c_2,c_1)$ induced by the vertices that are not in $P$. Note that $x,y \notin V(Q)$. Define $c':E(G) \setminus \{yz,xz\}$ as $c'(e)=c(e)$ for any $e \in E(H) \setminus \{yz\}$ and $c'(xy)=c_1 \notin c(x)$ is a proper $(\chi'(G)-1)$-edge coloring of the graph $G \setminus \{xz,yz\}$ with $S'=\{xz,yz\}$ as the mitigating set of $G$. Since $c_1 \notin c'(z)$, $Q=P_z(c_2,c_1)$ which does not contain $x$ and $y$. If $V(Q)=\{z\}$ then Lemma~\ref{l:mitigating2} implies the result. Otherwise by Remark~\ref{r:path}, one can switch the colors $c_{1}$ and $c_{2}$ in $P_{z}(c_{2},c_{1})$ of $G\setminus S'$ and keep the color of other edges. Since $c_2 \notin c'(z)$ and $c_1=c'(xy)\neq c_2$, we get the result by Lemma~\ref{l:mitigating2}.
\end{proof}

\section{Graphs with $es_{\chi'}(G)= \lfloor \frac{n}{2} \rfloor-1$ }\label{sec:5}

In this section, we prove that Conjecture~\ref{conj1} holds for every graph $G$ of order $n$ with $es_{\chi'}(G)= \lfloor \frac{n}{2} \rfloor-1$. In addition, we characterize the graphs of Class $2$ with the chromatic edge stability index equal to $\lfloor \frac {n}{2} \rfloor -1$.

We start by the following auxiliary results that are useful in the proof of the main theorem.

\begin{remark}\label{coloringComplete}
If $n$ is odd and $V(K_n)=\{v_1,\ldots , v_n\}$, then there exists a proper $n$-edge coloring $c$ of $K_n$ such that for each $i \in [n]$, $c_i \notin c(v_i)$ and $c_j \in c(v_i)$ for any $j \neq i$. 
\end{remark}

\begin{theorem}\label{th:perfectMatching} {\rm \cite{hou-2011}}
Let $G$ be a connected $k$-regular graph of even order $n$. If $k \geq \frac {n}{3}$, then $G$ contains a perfect matching.
\end{theorem}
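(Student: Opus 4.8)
The plan is to derive the result from Tutte's classical theorem on perfect matchings (equivalently, the Tutte--Berge formula). Suppose for contradiction that $G$ has no perfect matching; then there is a set $S\subseteq V(G)$ with $o(G-S)>|S|$, where $o(H)$ denotes the number of odd components of $H$ and $G-S$ is $G$ with the vertices of $S$ deleted. Since $n$ is even, $o(G-S)\equiv|S|\pmod 2$, so in fact $q:=o(G-S)\ge |S|+2$; write $s=|S|$. Connectivity of $G$ gives $s\ge 1$ (if $s=0$ then $o(G)\le 1$, a contradiction). The cases $k\le 2$ are immediate, since for $k\le 2$ a connected $k$-regular graph of even order is $K_2$ or an even cycle, which has a perfect matching; so from now on assume $k\ge 3$, i.e.\ $n\le 3k$.

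The core is an edge-counting argument across the boundary of $S$. Call an odd component of $G-S$ \emph{large} if it has more than $k$ vertices and \emph{small} otherwise. Since the $n-s\le 3k$ vertices outside $S$ must accommodate all large components and each large component has at least $k+1$ vertices, there are at most two large odd components; hence at least $q-2$ of the $q$ odd components are small. For a small odd component $C$ on $n_C\le k$ vertices, summing the degrees inside $C$ shows that the number of edges from $C$ to $S$ equals $k\,n_C-2e(C)\ge k\,n_C-n_C(n_C-1)=n_C(k+1-n_C)$; since $x\mapsto x(k+1-x)$ is concave, on the range $1\le n_C\le k$ (with $n_C$ odd, so $n_C\ne 2$) its minimum is attained at an endpoint, and for $k\ge 3$ both endpoint values are at least $k$. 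Thus every small odd component sends at least $k$ edges to $S$, while every large odd component sends at least one edge to $S$ by connectivity. On the other hand, the total number of edges with exactly one endpoint in $S$ equals $ks-2e(G[S])\le ks$.

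Comparing the two counts finishes the proof. If $\ell\in\{0,1,2\}$ is the number of large odd components, then $G-S$ has $p=q-\ell\ge s+2-\ell$ small odd components, so the number of edges between $S$ and the odd components of $G-S$ is at least $pk+\ell$. A short case check shows $pk+\ell>ks$ in every case: for $\ell=0$ we have $p\ge s+2$; for $\ell=1$, $p\ge s+1$, so $pk\ge(s+1)k>ks$; for $\ell=2$, $p\ge s$ and the two large components contribute an extra $2$, giving $pk+\ell\ge ks+2$. This contradicts the upper bound $ks$, so $G$ must have a perfect matching. I expect the only delicate point to be the estimate ``each small odd component sends at least $k$ edges to $S$'': it is exactly here that the hypothesis $k\ge 3$ and the optimization of $n_C(k+1-n_C)$ over $3\le n_C\le k$ are used, and it is what makes the final inequality strict with enough slack to absorb the (at most two) large components. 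The degenerate sub-possibilities — $s=0$, even components of $G-S$, or $S$ failing to meet some component — are all ruled out cheaply and cause no real difficulty.
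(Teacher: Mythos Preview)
The paper does not give its own proof of this statement: Theorem~\ref{th:perfectMatching} is quoted from~\cite{hou-2011} and used as a black box, so there is nothing in the paper to compare your argument against.

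That said, your Tutte-based argument is correct and is essentially the standard way to prove results of this type. A couple of small remarks on presentation. First, the parenthetical ``with $n_C$ odd, so $n_C\ne 2$'' is not needed: concavity of $x\mapsto x(k+1-x)$ on the full interval $[1,k]$ already forces the minimum to the endpoints, where the value is exactly $k$, so the bound $e(C,S)\ge k$ holds for every $1\le n_C\le k$ without appealing to parity. Second, in the $\ell=2$ case you should perhaps make explicit that $s\ge 1$ (already established) guarantees $p\ge s\ge 1$, so the small-component contribution $pk$ is genuinely present; the inequality $pk+\ell\ge sk+2>ks$ then goes through cleanly. Neither point is a gap, just a matter of tidying the write-up.
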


\begin{lemma}\label{l:esLessthenA}
If $G$ is a graph of Class $2$ and $H=Core(G)$, then $\es(G) \leq \alpha'(H)$.
\end{lemma}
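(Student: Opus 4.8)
The plan is to take a maximum matching $M$ in $H = Core(G)$ with $|M| = \alpha'(H)$ and show that $F := M$ (viewed as a set of edges of $G$) is a mitigating set, i.e.\ $\chi'(G\setminus M) < \chi'(G)$. Since $G$ is of Class $2$, we have $\chi'(G) = \Delta(G)+1$, so it suffices to prove $\chi'(G\setminus M) = \Delta(G)$. For this I would invoke Fournier's Lemma (Lemma~\ref{Fournier}): it is enough to show that $Core(G\setminus M)$ contains no vertex of degree $\Delta(G)$ at all, equivalently that $\Delta(G\setminus M) < \Delta(G)$, which already forces $\chi'(G\setminus M) \le \Delta(G) < \chi'(G)$ by Vizing's Theorem (Theorem~\ref{thm:vising}) without even needing Fournier.

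So the real content reduces to a purely combinatorial claim: if $M$ is a maximum matching of $H = Core(G)$, then every vertex of $G$ of degree $\Delta(G)$ is covered by $M$, hence loses an incident edge in $G\setminus M$, so $\Delta(G\setminus M) \le \Delta(G) - 1$. This is where I would spend the effort. Suppose some vertex $v$ of maximum degree in $G$ is left uncovered by $M$. All neighbors of $v$ inside $H$ must be covered by $M$ (otherwise an uncovered edge $vu$ with $u$ also uncovered could be added to $M$, contradicting maximality) — but that alone does not finish it, since $v$ might have no neighbors of maximum degree, i.e.\ be an isolated vertex of $H$. The point is: a vertex of maximum degree in $G$ is by definition a vertex of $H = Core(G)$; if it is isolated in $H$ then $M$ being a \emph{maximum} matching of $H$ does not cover it, and removing $M$ does not decrease its degree. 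Hence the naive argument fails, and I expect this to be the main obstacle.

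To get around it, the honest route is: let $U$ be the set of vertices of maximum degree in $G$ that are left uncovered by $M$; these are necessarily isolated vertices of $H$ (any non-isolated uncovered vertex of $H$ would, together with an uncovered neighbor — which must exist by maximality arguments on its component — contradict maximality; more carefully, an uncovered vertex adjacent in $H$ only to covered vertices need not be extendable). Since this subtlety is genuine, I would instead argue more flexibly: rather than insisting $F$ be exactly a maximum matching of $H$, I build a matching $F$ in $H$ of size at most $\alpha'(H)$ that \emph{covers $Core(G)$'s every component's full vertex set as far as possible}, handling isolated vertices of $H$ separately by noting they cannot exist — a vertex $u$ isolated in $Core(G)$ has all neighbors of degree $< \Delta(G)$, but then... this still does not immediately contradict. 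The cleanest fix: pick $F$ to be a maximum matching of $H$; if some maximum-degree vertex $v$ stays uncovered, it is isolated in $H$; then $\{v\}$ together with $H$ shows $Core(G)$ is \emph{not} a disjoint union of cycles unless $H$ is edgeless. If $H$ is edgeless, $Core(G)$ is a forest and Lemma~\ref{Fournier} makes $G$ Class $1$, contradiction; if $H$ has an edge but also an isolated maximum-degree vertex, $Core(G)$ is not a disjoint union of cycles, its components are each (trivially, for the isolated vertex) trees or unicyclic, so by Lemma~\ref{Akbari} $G$ is Class $1$ — again a contradiction. Hence no maximum-degree vertex is uncovered, $\Delta(G\setminus F) \le \Delta(G)-1$, and $\chi'(G\setminus F) \le \Delta(G) < \chi'(G)$, giving $\es(G) \le |F| = \alpha'(H)$. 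The step I expect to need the most care is ruling out the isolated-vertex-of-$Core(G)$ case cleanly via Lemma~\ref{Akbari}, making sure its hypotheses (connectedness of $G$, each component of $Core(G)$ unicyclic or a tree, and $Core(G)$ not a disjoint union of cycles) are all legitimately met.
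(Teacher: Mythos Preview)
Your proposal contains a genuine gap. You aim to show $\Delta(G\setminus M) < \Delta(G)$ for a maximum matching $M$ of $H$, i.e., that $M$ covers every vertex of $H$. You correctly notice this can fail, then repeatedly assert that any uncovered vertex of $H$ must be \emph{isolated} in $H$. This is false: take $H = P_3 = a\text{--}b\text{--}c$ and $M = \{ab\}$; then $c$ is uncovered but not isolated. You yourself flag the problem (``an uncovered vertex adjacent in $H$ only to covered vertices need not be extendable''), yet your ``cleanest fix'' re-asserts the same false claim. Even in the isolated-vertex case, your appeal to Lemma~\ref{Akbari} is illegitimate: that lemma requires \emph{every} component of $Core(G)$ to be a tree or unicyclic, but the non-isolated components of $H$ can be arbitrary; it also requires $G$ to be connected, which is not assumed here.

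The paper avoids all of this by applying Fournier's lemma to $G\setminus M$ rather than to $G$. Take a maximum matching $M$ of $H$. If $M$ is perfect in $H$, then $\Delta(G\setminus M)<\Delta(G)$ and Vizing gives $\chi'(G\setminus M)\le\Delta(G)<\chi'(G)$. If $M$ is not perfect, then $\Delta(G\setminus M)=\Delta(G)$ and the vertices of maximum degree in $G\setminus M$ are precisely the $M$-uncovered vertices of $H$. Since $M$ is maximum, these form an independent set in $H$, hence in $G$, so $Core(G\setminus M)$ is edgeless --- a forest --- and Lemma~\ref{Fournier} yields $\chi'(G\setminus M)=\Delta(G)<\chi'(G)$. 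Either way $M$ is mitigating and $\es(G)\le|M|=\alpha'(H)$. The point you missed is that one does not need $M$ to cover all of $H$; it suffices that the uncovered vertices are independent, which is automatic for a maximum matching.
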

\begin{proof}
Let $M$ be a maximum matching of $H$ and let $G'=G \setminus M$.
If $M$ is a perfect matching of $H$, then $\Delta(G') < \Delta(G)$ and hence $\chi'(G') \leq \Delta(G')+1 < \Delta(G)+1 = \chi'(G)$. Thus, $\es(G) \leq |M|= \alpha'(H)$.

If $M$ is not a perfect matching of $H$, then $\Delta(G') = \Delta(G)$. Clearly, $V(H) \setminus V(M)$ is an independent set. Thus, $Core(G')$ is a forest and by Lemma~\ref{Fournier}, $G'$ is of Class 1. Hence, $\chi'(G')=\Delta(G') = \Delta(G) < \chi'(G)$ and the proof is complete.
\end{proof}

\begin{lemma}\label{perfectMatchingCore}
If $G$ is of Class $2$ of order $n$ and $M$ is a perfect matching of $Core(G)$, then $H=G \setminus M$ is of Class $2$ and $\es(H) \geq \es(G)$.
\end{lemma}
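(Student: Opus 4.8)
The plan is to establish both assertions via one elementary device: if a graph $K$ has a matching $M\subseteq E(K)$ and $K\setminus M$ admits a proper $t$-edge coloring, then $K$ admits a proper $(t+1)$-edge coloring, obtained by assigning one new color to every edge of $M$ (this is proper, since $M$ is a matching and the new color does not occur on $K\setminus M$).

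First I would pin down $\Delta(H)$. Because $M$ is a perfect matching of $Core(G)$, every vertex of degree $\Delta(G)$ in $G$ is covered by exactly one edge of $M$, so it has degree $\Delta(G)-1$ in $H=G\setminus M$, while every other vertex of $G$ already has degree at most $\Delta(G)-1$ and is incident with no edge of $M$. Hence $\Delta(H)=\Delta(G)-1$. Suppose now, for a contradiction, that $H$ is of Class~$1$, i.e.\ $\chi'(H)=\Delta(G)-1$. Applying the device to $K=G$ and a proper $(\Delta(G)-1)$-edge coloring of $H$ produces a proper $\Delta(G)$-edge coloring of $G$, contradicting $\chi'(G)=\Delta(G)+1$. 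Together with Vizing's theorem (Theorem~\ref{thm:vising}) applied to $H$ this forces $\chi'(H)=\Delta(H)+1=\Delta(G)$, so $H$ is of Class~$2$.

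For the inequality $\es(H)\ge\es(G)$, let $F$ be a minimum mitigating set of $H$, so that $|F|=\es(H)$ and $\chi'(H\setminus F)\le\chi'(H)-1=\Delta(G)-1$. Since $F\subseteq E(H)=E(G)\setminus M$, the sets $F$ and $M$ are disjoint, hence $G\setminus F=(H\setminus F)\cup M$ with $M$ still a matching of $G\setminus F$. Applying the device once more, this time to $K=G\setminus F$ and a proper $(\Delta(G)-1)$-edge coloring of $H\setminus F$, we obtain a proper $\Delta(G)$-edge coloring of $G\setminus F$, whence $\chi'(G\setminus F)\le\Delta(G)<\chi'(G)$. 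Therefore $F$ is itself a mitigating set of $G$, and $\es(G)\le|F|=\es(H)$.

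I do not expect a real obstacle here. The two points requiring a little care are the degree bookkeeping that yields $\Delta(H)=\Delta(G)-1$ --- which relies on $M$ saturating precisely the maximum-degree vertices of $G$ --- and the observation that $F\cap M=\emptyset$, so that the matching $M$ survives in $G\setminus F$ and can be recolored; both are immediate consequences of $F\subseteq E(G\setminus M)$.
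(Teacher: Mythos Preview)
Your proof is correct and follows essentially the same route as the paper's: both rely on the observation that a proper $t$-edge coloring of $G\setminus M$ together with one new color on the matching $M$ yields a proper $(t+1)$-edge coloring of $G$, from which $\chi'(H)=\chi'(G)-1$ and the transfer of mitigating sets from $H$ to $G$ follow immediately. The paper's version is simply terser, asserting $\Delta(H)=\Delta(G)-1$, $\chi'(H)=\chi'(G)-1$, and that any mitigating set of $H$ is one of $G$ without spelling out the coloring device you make explicit.
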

\begin{proof}
Note that $\Delta(H)=\Delta(G)-1$ and $\chi'(H)=\chi'(G)-1$, thus $H$ is of Class 2. If $N$ is a mitigating set of $H$, then $N$ is a mitigating set of $G$, which implies that $\es(H) \geq \es(G)$.
\end{proof}

\begin{lemma}\label{l:regularUnionPerfectMatching}
Let $r$ be a positive integer. If $G$ is the graph obtained from $K_{2r+1} \cup K_{2r+1}$ by adding a perfect matching, then $G$ is of Class $1$.
\end{lemma}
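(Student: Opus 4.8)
The plan is to produce an explicit proper $(2r+1)$-edge coloring of $G$. Note first that every vertex of $G$ lies in one copy of $K_{2r+1}$, contributing degree $2r$, and is incident with exactly one edge of the added perfect matching, so $\Delta(G)=2r+1$; hence exhibiting a $(2r+1)$-edge coloring will show $\chi'(G)=\Delta(G)$, i.e. $G$ is of Class $1$.

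The first step is a normalization of the matching. Since $K_{2r+1}$ has odd order it has no perfect matching of its own, so the $2r+1$ added edges all join the two copies; as the symmetric group on the vertices of a copy of $K_{2r+1}$ acts as a group of automorphisms, I would relabel so that the two copies are $K_{2r+1}$ on $A=\{a_1,\ldots,a_{2r+1}\}$ and $K_{2r+1}$ on $B=\{b_1,\ldots,b_{2r+1}\}$, and the added perfect matching is $M=\{a_ib_i : i\in[2r+1]\}$. Thus every graph of the stated form is isomorphic to this one, and it suffices to color it.

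Next I would invoke Remark~\ref{coloringComplete} on each copy: there is a proper $(2r+1)$-edge coloring $c^A$ of the copy on $A$, with colors $c_1,\ldots,c_{2r+1}$, such that $c_i\notin c^A(a_i)$ while $c_j\in c^A(a_i)$ for all $j\neq i$; similarly a coloring $c^B$ of the copy on $B$ with $c_i\notin c^B(b_i)$ and $c_j\in c^B(b_i)$ for $j\neq i$. Define $c$ on $G$ by letting it agree with $c^A$ inside the copy on $A$, with $c^B$ inside the copy on $B$, and setting $c(a_ib_i)=c_i$. Then the verification that $c$ is proper is routine and forms the last step: at $a_i$ the edges inside the copy on $A$ use precisely the colors $\{c_1,\ldots,c_{2r+1}\}\setminus\{c_i\}$, each once, and the only other edge at $a_i$ is $a_ib_i$, colored $c_i$, so all $2r+1$ edges at $a_i$ receive distinct colors; the situation at each $b_i$ is identical. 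Hence $c$ is a proper $(2r+1)$-edge coloring, $\chi'(G)=2r+1=\Delta(G)$, and $G$ is of Class $1$.

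I do not expect a genuine obstacle here. The only points needing care are the reduction to the canonical matching $M=\{a_ib_i\}$ (handled by the automorphisms of $K_{2r+1}$) and the recognition that Remark~\ref{coloringComplete} supplies exactly the coloring in which the unique color missing at each vertex can be ``filled in'' by the incident matching edge — so that the two near-$1$-factorizations of the copies glue, across $M$, into a genuine $1$-factorization of $G$.
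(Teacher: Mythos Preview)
Your proof is correct and follows essentially the same approach as the paper: both arguments color each copy of $K_{2r+1}$ via Remark~\ref{coloringComplete} so that matched vertices are missing the same color, then fill in the matching edges with those missing colors. The only cosmetic difference is that you first normalize the matching to $\{a_ib_i\}$ via automorphisms and then apply the canonical coloring, whereas the paper leaves the matching arbitrary and directly chooses the two colorings so that for each $uv\in M$ the same color is absent at $u$ and $v$; these are the same idea in slightly different packaging.
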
 

\begin{proof}
Let $M$ be a perfect matching added to $K_{2r+1} \cup K_{2r+1}$. By Remark~\ref{coloringComplete}, let $c$ be a proper $(2r+1)$-edge coloring of $K_{2r+1} \cup K_{2r+1}$ such that for each $uv \in M$ there exists a color $c_i$ not appearing in $u$ and $v$. So, we color $uv$ with $c_i$. Hence, $G$ is of Class 1.
\end{proof}

\begin{theorem}\label{th:notClass2}
Let $G$ be a graph of even order $n$ which is of Class $2$. Then $es_{\chi'}(G)= \frac{n}{2}-1$ if and only if $G \cong K_{1} \cup K_{n-1}$ or $G \cong K_{2m+1} \cup K_{2m+1}$, where $m$ is a positive integer.
\end{theorem}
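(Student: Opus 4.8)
The plan is to prove both directions of the equivalence, with the reverse (``if'') direction being the easier one. For the reverse direction, first suppose $G \cong K_1 \cup K_{n-1}$ with $n$ even, so $n-1$ is odd and $K_{n-1}$ is a Class~2 complete graph of odd order; by Lemma~\ref{l:complete}, $\es(K_{n-1}) = \lfloor (n-1)/2 \rfloor = (n-2)/2 = n/2 - 1$, and the isolated vertex contributes nothing, so $\es(G) = n/2-1$. Next suppose $G \cong K_{2m+1} \cup K_{2m+1}$, so $n = 2(2m+1) = 4m+2$ and $n/2 - 1 = 2m$. Here $\chi'(G) = 2m+1$ and $\Delta(G) = 2m$, so $G$ is Class~2. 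For the upper bound $\es(G) \le 2m$, pick $m$ non-adjacent edges in each copy of $K_{2m+1}$ (a near-perfect matching of size $m$ each): removing these $2m$ edges leaves each copy a Class~1 graph of odd order (since one vertex drops to degree $2m-1$, the core becomes a forest, Lemma~\ref{Fournier}), so the chromatic index drops to $2m$. For the lower bound, I would count edges: each copy has $\binom{2m+1}{2}$ edges, a $2m$-edge-colored subgraph on $2m+1$ vertices has at most $2m \cdot m$ edges per copy, so at least $\binom{2m+1}{2} - 2m^2 = m$ edges must be removed from each copy, giving $\es(G) \ge 2m$; alternatively invoke Lemma~\ref{l:complete} componentwise together with the fact that a mitigating set must drop $\chi'$ in at least one component.

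For the forward direction, assume $G$ is Class~2 of even order $n$ with $\es(G) = n/2 - 1$. The key tool is Lemma~\ref{l:esLessthenA}: with $H = Core(G)$ we have $n/2 - 1 = \es(G) \le \alpha'(H) \le \lfloor |V(H)|/2 \rfloor \le n/2$. So either $\alpha'(H) = n/2$, forcing $H = Core(G)$ to have a perfect matching on all $n$ vertices, meaning $G$ is regular — but then Corollary~\ref{cor:regClass1} would force $G$ Class~1 (even order regular) unless $\es(G) = n/2$, contradiction; so actually one must handle the regular case carefully: $G$ regular Class~2 of even order cannot occur with $\es = n/2-1$ since by Theorem~\ref{t:mainExtreme} a regular Class~1 even-order graph has $\es = n/2$ and Theorem~\ref{p:extremeClass2} shows a Class~2 even-order graph has $\es < n/2$, but we need to rule out $\es(G) = n/2-1$ for regular Class~2 graphs — this needs an argument, perhaps that $Core(G) = G$ is regular of even order hence (if connected) has a perfect matching by a degree condition, and removing it gives a regular Class~? graph, iterating. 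The cleaner line: $\alpha'(H) \ge n/2 - 1$ means $H$ misses at most two vertices of a perfect matching, i.e. $|V(H)| \ge n-2$, so all but at most two vertices of $G$ have maximum degree.

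The heart of the argument is then to pin down the structure from ``at most two vertices have degree $< \Delta(G)$.'' I would split into cases on $|V(H)|$. If $|V(H)| = n$, $G$ is $\Delta$-regular of even order; here I would argue via Theorem~\ref{th:perfectMatching} / Theorem~\ref{th:notClass2}-type iteration or Lemma~\ref{perfectMatchingCore}: repeatedly strip perfect matchings. If $|V(H)| = n - 1$, then exactly one vertex $v$ has degree $< \Delta$; analyze $\deg(v)$ and use that $G - v$ is nearly regular — aiming to force $G - v \cong K_{n-1}$ with $v$ isolated, hence $G \cong K_1 \cup K_{n-1}$. If $|V(H)| = n-2$, two vertices have smaller degree; here I expect to force $G$ to be (a subgraph of / exactly) two disjoint odd cliques. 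The main obstacle will be this last structural step: translating the edge-count/matching bound on the core into the precise isomorphism type, and in particular ruling out all the ``near-miss'' graphs that have the right core size but smaller $\es$. I would control this by combining Vizing's Adjacency Lemma (Lemma~\ref{VizingAdj}) applied to a critical Class~2 subgraph $G'$ with $\Delta(G') = \Delta(G)$ — which forces the core to be dense and its vertices to have many core-neighbors — with the edge-counting inequality $|E(G')| > \frac{(|V(G')|-1)\Delta}{2}$ from Lemma~\ref{Bondy} to squeeze $G'$ (and then $G$) into one of the two listed forms, using Lemma~\ref{l:regularUnionPerfectMatching} to confirm that adding a matching between the two odd cliques would destroy Class~2, so no extra edges between the components are possible.
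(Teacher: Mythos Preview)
Your overall scaffolding is right --- invoke Lemma~\ref{l:esLessthenA} to get $|V(H)| \ge n-2$ and split into the three cases $|V(H)| \in \{n-2,n-1,n\}$ --- but you have misrouted the outcomes of the cases, and this creates a real gap.

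The graph $K_{2m+1}\cup K_{2m+1}$ is $2m$-regular, so its core is all of $G$: it lives in the case $|V(H)|=n$, not $|V(H)|=n-2$. The case $|V(H)|=n-2$ is in fact a quick contradiction and yields no examples: here $H$ has a perfect matching $M$ of size $n/2-1$, and for any $e\in M$ the graph $G\setminus(M\setminus\{e\})$ has core equal to a single edge, hence a forest, so by Lemma~\ref{Fournier} it is Class~1 and $\es(G)\le n/2-2$. Your expectation that this case produces the two cliques is simply mistaken, and the Vizing's Adjacency Lemma / Lemma~\ref{Bondy} machinery you propose for it is neither needed nor obviously workable.

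The genuine work is in the regular case $|V(H)|=n$, and your sketch there (``strip perfect matchings'') misses the two key steps the paper uses. First, a color-class count: since every color class in a $(\Delta+1)$-coloring has at least $n/2-1$ edges, $(r+1)(n/2-1)\le nr/2$ forces $r\ge n/2-1$. This is what makes the disconnected subcase immediate ($G$ must be $K_{n/2}\cup K_{n/2}$) and what guarantees the hypothesis $r\ge n/3$ of Theorem~\ref{th:perfectMatching} so that you can strip a perfect matching in the connected subcase. Second, after iterating Lemma~\ref{perfectMatchingCore} you reach a disconnected $G_\ell\cong K_{n/2}\cup K_{n/2}$, and then Lemma~\ref{l:regularUnionPerfectMatching} shows $G_{\ell-1}$ would be Class~1, a contradiction; this is the correct place to invoke that lemma, not as a way to rule out stray edges in the $|V(H)|=n-2$ case. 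For $|V(H)|=n-1$, the paper's argument (swap edges of a maximum matching to force $d_H(v)=a-1$ for every $v$, hence $H\cong K_{n-1}$, then show the remaining vertex must be isolated) is sharper than ``analyze $\deg(v)$''; your plan there is pointed in the right direction but would need this idea to close.
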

\begin{proof}
Let $H= Core(G)$ and $\vert V(H)\vert=a$. It follows from Lemma~\ref{l:esLessthenA} that $\frac{n}{2}-1=es_{\chi'}(G)\leq \alpha'(H) \leq \frac{a}{2}$. Therefore $a\geq n-2$. We distinguish three cases with respect to $a$.\\

\noindent {\bf{Case 1.}} Let $a=n-2$. Since $\alpha'(H)=\frac{n}{2}-1$, there exists a perfect matching $M$ in $H$. If we choose an arbitrary set $S$ of $\alpha'(H)-1$ edges of $M$, then $Core(G\setminus S)$ is a forest. By Lemma~\ref{Fournier}, $G\setminus S$ is of Class 1 and $\chi'(G\setminus S)<\chi'(G)$. Hence, $es_{\chi'}(G) \leq \frac{n}{2}-2$, a contradiction.\\

\noindent {\bf{Case 2.}} Let $a=n-1$. Since $\alpha'(H)=\frac{n}{2}-1$, every maximum matching $M$ of $H$ contains all vertices of $H$ except one vertex, say $v_{1}\in V(H)$. Assume that there exists an edge $v_{2}v_{3}\in M$ such that $v_{1}v_2\notin E(H)$ or $v_1v_3 \notin E(H)$. Then $Core(G\setminus(M\setminus \{ v_{2}v_{3} \}))$ is a forest of order 3 and by Lemma~\ref{Fournier}, the graph $G\setminus(M\setminus \{ v_{2}v_{3}\})$ is of Class 1. Hence, $es_{\chi'}(G) \leq \frac{n}{2}-2$, a contradiction. Therefore $d_{H}(v_1)=a-1$. Let $M'=(M\setminus \{v_2v_3\}) \cup \{v_1v_2\}$, and note that $M'$ is another maximum matching, hence from the same reason as earlier, $d_{H}(v_3)=a-1$. By repeating this argument, we infer $H \cong K_{n-1}$. Let $V(G)\setminus V(H)= \{ u\}$. If $G$ is not connected, then $G \cong K_{1} \cup K_{n-1}$, as desired. If $G$ is connected, then there exists a vertex $w\in V(H)$ such that $uw\in E(G)$. Hence, $\Delta(G)=d(w)=n-1$. Since $H=Core(G)$, we conclude that, $ G \cong K_{n}$ and $a=n$, a contradiction.\\

\noindent {\bf{Case 3.}} Let $a=n$. In this case $G$ is an $r$-regular graph. Since $es_{\chi'}(G)=\frac{n}{2}-1$, in any proper $\chi'(G)$-edge coloring of $G$, each color class has size at least $\frac {n}{2}-1$. Since $\chi'(G)=r+1$, $(r+1)(\frac {n}{2}-1) \leq \frac {nr}{2}=\vert E(G) \vert$, which implies that $r \geq \frac {n}{2}-1$. If $G$ is not connected, then obviously $G \cong K_{\frac {n}{2}} \cup K_{\frac {n}{2}}$, where $\frac {n}{2}$ is odd because $G$ is of Class $2$, as desired.
Now, if $G$ is connected by Theorem~\ref{th:perfectMatching}, $G$ has a perfect matching $M_1$ (note that $r \geq \frac {n}{2}-1 \geq \frac {n}{3}$, because otherwise $n=2$ or $n=4$ which implies that $G$ is of Class $1$, a contradiction). Let $G_1=G \setminus M_1$. By Lemma~\ref{perfectMatchingCore}, $G_1$ is of Class 2 and $\es(G_1) \geq \frac {n}{2}-1$. Now, Theorem~\ref{p:extremeClass2} yields that $es_{\chi'}(G_1)=\frac{n}{2}-1$. If $G_1$ is a connected $r_1$-regular graph, then by the same argument $r_1 \geq \frac{n}{2}-1$ and by Theorem~\ref{th:perfectMatching}, it has a perfect matching $M_2$. By repeating this procedure in $l$ steps, we find a regular graph $G_l$ of Class 2 which is not connected such that $\es(G_l)=\frac{n}{2}-1$. Since $G_l$ is a regular graph, similar to the previous proof the degree of $G_l$ is at least $\frac {n}{2}-1$. Hence, $G_l \cong K_{\frac {n}{2}} \cup K_{\frac {n}{2}}$, where $\frac {n}{2}$ is odd. By Lemma~\ref{l:regularUnionPerfectMatching}, $G_{l-1}$ is of Class 1, a contradiction.\\

The reverse direction follows from Theorem~\ref{p:extremeClass2}.
\end{proof}

Now, Theorems~\ref{th:notClass2} and~\ref{p:extremeClass2} imply the following.

\begin{corollary}\label{co:notClass2}
If $G$ is a connected Class $2$ graph of even order $n$, then $\es(G) \leq \frac{n}{2}-2$.
\end{corollary}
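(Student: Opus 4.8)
The plan is to combine the two previously established results about Class 2 graphs of even order. Suppose $G$ is a connected Class 2 graph of even order $n$. By Lemma~\ref{l:bound}, $\es(G)\le \lfloor n/2\rfloor = n/2$. Theorem~\ref{p:extremeClass2} tells us that $\es(G)=n/2$ would force $n$ to be odd, which contradicts our assumption that $n$ is even; hence $\es(G)\le n/2-1$. It remains to rule out the case $\es(G)=n/2-1$.

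For that, I would invoke Theorem~\ref{th:notClass2}, which characterizes the Class 2 graphs $G$ of even order with $\es(G)=n/2-1$ as exactly the graphs $G\cong K_1\cup K_{n-1}$ or $G\cong K_{2m+1}\cup K_{2m+1}$ for some positive integer $m$. Both of these graphs are disconnected. Therefore, since $G$ is assumed connected, $G$ cannot be one of these graphs, and so $\es(G)\ne n/2-1$. Combining with the previous paragraph yields $\es(G)\le n/2-2$, as claimed.

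There is essentially no obstacle here: the statement is a direct corollary, obtained by observing that the two sporadic families characterized in Theorem~\ref{th:notClass2} (together with the extremal family $K_n$ with $n$ odd from Theorem~\ref{p:extremeClass2}) are all disconnected, so a connected Class 2 graph of even order is excluded from the top two values $n/2$ and $n/2-1$ of the chromatic edge stability index. The only thing to be careful about is to state explicitly that $n$ even together with Theorem~\ref{p:extremeClass2} rules out $\es(G)=\lfloor n/2\rfloor=n/2$, and $n$ even together with connectivity and Theorem~\ref{th:notClass2} rules out $\es(G)=n/2-1$.
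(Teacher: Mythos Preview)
Your proposal is correct and follows exactly the route the paper intends: the corollary is stated immediately after the sentence ``Now, Theorems~\ref{th:notClass2} and~\ref{p:extremeClass2} imply the following,'' with no further proof, and you have simply spelled out that implication. One small quibble: in your summary paragraph you say the extremal family $K_n$ (odd $n$) is disconnected, which is false --- but your actual argument correctly rules it out on parity grounds, so this slip is only in the commentary, not in the proof.
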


\begin{lemma}\label{l:regular}
If $G$ is an $(n-3)$-regular graph of odd order $n$, then $G$ is of Class $2$ and $\es(G) =\frac{n-3}{2}$.
\end{lemma}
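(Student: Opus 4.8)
The plan is to first pin down the chromatic index of $G$ and then establish the two inequalities $\es(G)\le\frac{n-3}{2}$ and $\es(G)\ge\frac{n-3}{2}$ by elementary edge counts. Since $G$ is $(n-3)$-regular and has an edge, $n\ge 5$, and $|E(G)|=\frac{n(n-3)}{2}>\frac{(n-1)(n-3)}{2}=\frac{(n-1)\Delta(G)}{2}$; hence Lemma~\ref{Bondy} yields that $G$ is of Class $2$, so $\chi'(G)=\Delta(G)+1=n-2$.

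For the upper bound I would apply Lemma~\ref{BalancedColoring} to obtain a balanced (proper, $(n-2)$-colored) edge coloring $c$ of $G$. As $n$ is odd, every color class has at most $\frac{n-1}{2}$ edges, so in a balanced coloring the only possible class sizes are $\frac{n-1}{2}$ and $\frac{n-3}{2}$; writing $x$ for the number of classes of size $\frac{n-1}{2}$, the identity $x\cdot\frac{n-1}{2}+(n-2-x)\cdot\frac{n-3}{2}=\frac{n(n-3)}{2}$ forces $x=n-3$, so exactly one color class, say $C_1$, has size $\frac{n-3}{2}$. By Remark~\ref{r:C1}, $\es(G)\le|C_1|=\frac{n-3}{2}$; moreover $C_1$ is a matching, so this already verifies Conjecture~\ref{conj1} for $G$. (Lemma~\ref{l:esLessthenA} only gives $\es(G)\le\alpha'(Core(G))=\alpha'(G)$, which may equal $\frac{n-1}{2}$, so it is the balanced coloring that supplies the sharp bound.)

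For the lower bound, let $F$ be a minimum mitigating set and put $G'=G\setminus F$, so $\chi'(G')=n-3$. Each of the $n-3$ color classes of a proper edge coloring of $G'$ is a matching, hence has at most $\lfloor n/2\rfloor=\frac{n-1}{2}$ edges, so $|E(G')|\le(n-3)\cdot\frac{n-1}{2}$. Since $|E(G')|=\frac{n(n-3)}{2}-|F|$, this rearranges to $|F|\ge\frac{n(n-3)-(n-1)(n-3)}{2}=\frac{n-3}{2}$. Combining with the previous paragraph gives $\es(G)=\frac{n-3}{2}$.

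I do not expect a genuine obstacle here: the argument is two counting estimates plus one invocation of Lemma~\ref{Bondy}. The only point that needs a careful (but routine) check is the arithmetic in the balanced-coloring step, namely that the constraints ``$|E(G)|=\frac{n(n-3)}{2}$, $n-2$ color classes, sizes pairwise differing by at most $1$, each $\le\frac{n-1}{2}$'' force precisely one class of size $\frac{n-3}{2}$; the smallest case $n=5$ (where $G\cong C_5$ and $\es(G)=1$) is then automatically covered.
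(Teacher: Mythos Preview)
Your proof is correct. The lower-bound argument (counting edges in $G\setminus F$ against $(n-3)\cdot\frac{n-1}{2}$) is exactly the paper's, and the Class~2 verification via Lemma~\ref{Bondy} is also the same.

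The only genuine difference is in the upper bound. The paper obtains $\es(G)\le\frac{n-3}{2}$ in one line by invoking Theorem~\ref{p:extremeClass2}: since $G$ is Class~2 of odd order but is not $K_n$, it cannot have $\es(G)=\lfloor n/2\rfloor$, so $\es(G)\le\frac{n-1}{2}-1=\frac{n-3}{2}$. You instead appeal to Lemma~\ref{BalancedColoring} and do a small edge count to locate a color class of size exactly $\frac{n-3}{2}$. Both routes are short; the paper's is slicker because the heavy lifting is already done in Theorem~\ref{p:extremeClass2}, while yours is more self-contained and, as you note, has the side benefit of exhibiting an explicit matching mitigating set. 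Your arithmetic is fine: since the total edge count $\frac{n(n-3)}{2}$ strictly exceeds $(n-2)\cdot\frac{n-3}{2}$, some class in a balanced coloring must have size $\frac{n-1}{2}$, and then the balance condition forces the remaining sizes into $\{\frac{n-3}{2},\frac{n-1}{2}\}$, from which your linear equation gives exactly one small class.
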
 

\begin{proof}
By Lemma~\ref{Bondy}, $G$ is of Class 2 and hence by Theorem~\ref{p:extremeClass2}, $\es(G) \leq \frac{n-3}{2}$. By contradiction suppose that $\es(G) < \frac{n-3}{2}$. Therefore, there exists $M \subseteq E(G)$ such that $|M| \leq \frac{n-5}{2}$ and $\chi' (G \setminus M)=\Delta(G)=n-3$.
For each color class $C_i$, $|C_i| \leq \frac{n-1}{2}$. So $(n-3)(\frac{n-1}{2}) \geq |E(G\setminus M)| \geq \frac{n(n-3)}{2}-\frac{n-5}{2}$, a contradiction.
\end{proof}

\begin{lemma}\label{l:almostRegular}
If $G= \overline {K_2 \cup \ldots \cup K_2 \cup K_{1,2}}$ is a graph of odd order $n$, then $G$ is of Class $2$ and $\es(G) =\frac{n-3}{2}$.
\end{lemma}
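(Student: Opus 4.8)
First I would identify the graph $G$ explicitly. Writing $n = 2t+3$, the complement $\overline{K_2 \cup \cdots \cup K_2 \cup K_{1,2}}$ has $t$ copies of $K_2$ and one $K_{1,2}$ on $2t+3$ vertices; I would check that $G$ is obtained from $K_n$ by deleting a perfect matching on $2t$ of the vertices together with the two edges of a path $P_3$ on the remaining three vertices. So $G$ has $2t$ vertices of degree $n-2$ and three vertices of degree $n-2$ as well — let me recompute: deleting one edge drops two degrees by $1$, and in $K_{1,2}$ the center loses two edges while the leaves lose one each. Thus in $G$ the center of the former $K_{1,2}$ has degree $n-3$, its two leaves have degree $n-2$, and the $2t$ matched vertices have degree $n-2$. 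Hence $\Delta(G) = n-2$ and $Core(G)$ is the complete graph $K_{n-1}$ on all vertices except the center $v$ of the $K_{1,2}$. This combinatorial description is the foundation for everything that follows.

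Next I would establish that $G$ is of Class $2$. Since $n$ is odd, I would count edges: $|E(G)| = \binom{n}{2} - t - 2 = \binom{n}{2} - \frac{n-3}{2} - 2$. I would then compare this with $\frac{(n-1)\Delta(G)}{2} = \frac{(n-1)(n-2)}{2} = \binom{n-1}{2}$ and verify that $|E(G)| > \binom{n-1}{2}$ for all $t \geq 1$ (equivalently $n \geq 5$), so that Lemma~\ref{Bondy} forces $G$ to be of Class $2$, giving $\chi'(G) = \Delta(G)+1 = n-1$.

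For the value $\es(G) = \frac{n-3}{2}$: the upper bound comes from Lemma~\ref{l:esLessthenA} applied to $H = Core(G) \cong K_{n-1}$, which gives $\es(G) \leq \alpha'(K_{n-1}) = \frac{n-1}{2}$ — but this is one too large, so I would instead argue directly. Removing a set $M$ of $\frac{n-3}{2}$ edges of $K_{n-1}$ that saturates $n-3$ of its vertices and leaves two vertices $x,y$ of $Core(G)$ uncovered makes $Core(G \setminus M)$ an induced subgraph on $\{x,y,v\}$, which (since $v$ is nonadjacent to at least one of $x,y$ by construction of the $K_{1,2}$ — actually $v$ is adjacent to neither of its former leaves only; I must choose $x,y$ to be the two leaves of the $K_{1,2}$) is a forest, so by Lemma~\ref{Fournier} $G \setminus M$ is of Class $1$ and $\es(G) \leq \frac{n-3}{2}$. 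For the lower bound, suppose $M$ is a mitigating set with $|M| \leq \frac{n-5}{2}$. Then $\chi'(G \setminus M) = n-2$, so with $n-2$ color classes each of size at most $\frac{n-1}{2}$ we get $|E(G \setminus M)| \leq (n-2)\frac{n-1}{2} = \binom{n-1}{2}$; on the other hand $|E(G \setminus M)| \geq |E(G)| - \frac{n-5}{2} = \binom{n}{2} - \frac{n-3}{2} - 2 - \frac{n-5}{2} = \binom{n-1}{2} + (n-1) - \frac{n-3}{2} - 2 - \frac{n-5}{2}$, which I would simplify to show it exceeds $\binom{n-1}{2}$ for $n \geq 5$, a contradiction. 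This mirrors the counting argument in Lemma~\ref{l:regular} and Lemma~\ref{l:oddClass1}.

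The main obstacle is the careful bookkeeping in the lower-bound edge count: one must be precise about $|E(G)|$ and track the boundary case (the smallest $n$, i.e.\ $t = 1$, $n = 5$, where $G = \overline{K_2 \cup K_{1,2}}$ is a graph on $5$ vertices) to make sure the strict inequality genuinely holds and the argument does not degenerate. I would verify the $n=5$ case by hand as a sanity check. A secondary subtlety is ensuring that the two vertices left uncovered by the matching $M$ in the upper-bound argument can be chosen so that $Core(G \setminus M)$ is genuinely a forest rather than containing the triangle on the three low-structure vertices — this forces the specific choice of $x,y$ as the leaves of the $K_{1,2}$, which is why the $K_{1,2}$ (rather than $K_3$) appears in the statement.
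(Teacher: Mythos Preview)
Your overall strategy matches the paper's: Class~2 via Lemma~\ref{Bondy} and an edge count, lower bound by comparing $|E(G\setminus M)|$ against $(n-2)\cdot\frac{n-1}{2}$. The lower-bound computation is correct and identical to the paper's. Two points need fixing, however.

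First, $Core(G)$ is \emph{not} $K_{n-1}$. The $t=\frac{n-3}{2}$ matching edges of $\overline G$ lie among the non-center vertices, so $Core(G)$ is $K_{n-1}$ minus a matching of size $\frac{n-3}{2}$. This does not disturb your edge count, but it does break your upper-bound construction at $n=5$: with $x,y$ chosen as the two leaves, the required $M$ would be a perfect matching on the remaining two (formerly $K_2$-matched) vertices, and those two are nonadjacent in $G$.

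Second, your upper-bound argument is tangled because you mistakenly place $v$ in $Core(G\setminus M)$. In fact $d_{G\setminus M}(v)=n-3<n-2=\Delta(G\setminus M)$, so $Core(G\setminus M)=\{x,y\}$ for \emph{any} admissible $M$, and the forest condition is automatic; no special choice of $x,y$ is needed. The paper sidesteps all of this: since $G$ is Class~2 of odd order and $G\not\cong K_n$, Theorem~\ref{p:extremeClass2} gives $\es(G)\le\lfloor n/2\rfloor-1=\frac{n-3}{2}$ in one line.
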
 

\begin{proof}
Note that all vertices of $G$ except one have degree $\Delta(G)=n-2$ and one vertex of $G$ has degree $n-3$. Therefore $|E(G)|=\frac{(n-1)(n-2)+(n-3)}{2} > \frac{(n-1)\Delta(G)}{2}$ and hence by Lemma~\ref{Bondy} $G$ is of Class 2. By Theorem~\ref{p:extremeClass2}, $\es(G) \leq \frac{n-3}{2}$. By contradiction suppose that $\es(G) < \frac{n-3}{2}$. Therefore, there exists $M \subseteq E(G)$ such that $|M| \leq \frac{n-5}{2}$ and $\chi' (G \setminus M)=\Delta(G)=n-2$.
For each color class $C_i$, $|C_i| \leq \frac{n-1}{2}$. So $(n-2)(\frac{n-1}{2}) \geq |E(G\setminus M)| \geq \frac{(n-1)(n-2)+(n-3)}{2}-\frac{n-5}{2}$, a contradiction.
\end{proof}

\begin{lemma}\label{l:almostRegular2}
Let $r$ be a positive integer. If we add a matching of size $2r+1$ to $K_{2r+1} \cup K_{2r+1} \cup K_1$ and call the resulting graph by $G$, then $G$ is of Class $1$.
\end{lemma}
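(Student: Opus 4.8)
The plan is to split into two cases according to whether the added matching $M$ covers the vertex $w$ of the $K_1$ summand. Write $A$ and $B$ for the two copies of $K_{2r+1}$. Since $|V(G)|=4r+3$ and $|M|=2r+1$, exactly one vertex of $G$ is left uncovered by $M$; moreover every edge of $M$ joins two different blocks among $A$, $B$, $\{w\}$, because those are the only non-edges of $K_{2r+1}\cup K_{2r+1}\cup K_1$. In particular at most one edge of $M$ is incident with $w$, and a quick degree count gives $\Delta(G)=2r+1$ in every case, so it suffices to exhibit (or invoke) a proper $(2r+1)$-edge-colouring.

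\emph{Case 1: $w$ is uncovered.} Then all $2r+1$ edges of $M$ run between $A$ and $B$ and, as $|A|=|B|=2r+1$, they form a perfect matching between $A$ and $B$. Thus $G$ is exactly the graph of Lemma~\ref{l:regularUnionPerfectMatching} together with one added isolated vertex, and since an isolated vertex changes neither $\Delta$ nor $\chi'$, the graph $G$ is of Class $1$.

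\emph{Case 2: $w$ is covered}, say by $wa_1\in M$ with $a_1\in A$ (the subcase $a_1\in B$ is symmetric). No further edge of $M$ meets $w$, so the remaining $2r$ edges of $M$ form a perfect matching between $A\setminus\{a_1\}$ and $B\setminus\{b_1\}$ for a unique $b_1\in B$. Index $A=\{a_1,\ldots,a_{2r+1}\}$ and $B=\{b_1,\ldots,b_{2r+1}\}$ so that these matching edges are $a_ib_{\sigma(i)}$ for $i\in\{2,\ldots,2r+1\}$ and a bijection $\sigma$ of $\{2,\ldots,2r+1\}$. I would then use Remark~\ref{coloringComplete} to colour $A$ with colours $[2r+1]$ so that $a_i$ misses colour $i$ for each $i$, and to colour $B$ with colours $[2r+1]$ so that $b_1$ misses colour $1$ and $b_{\sigma(i)}$ misses colour $i$ for each $i\geq 2$; this second prescription is legitimate because the assignment it defines from $V(B)$ to $[2r+1]$ is a bijection, and Remark~\ref{coloringComplete} realises any such bijection after relabelling the colours. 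Finally colour $wa_1$ with colour $1$ and each matching edge $a_ib_{\sigma(i)}$ with colour $i$; each of these colours is absent at both endpoints of its edge, and $w$ has no other incident edge, so the result is a proper $(2r+1)$-edge-colouring. Hence $\chi'(G)=2r+1=\Delta(G)$ and $G$ is of Class $1$.

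The only genuinely delicate point is the colouring-of-$B$ step in Case 2: one must prescribe simultaneously, for both complete graphs, the single colour missing at every vertex, matched up along $\sigma$. This is precisely what the sharpened statement of Remark~\ref{coloringComplete} supplies — the missing colours are in bijection with the vertices — and the freedom to permute colour names upgrades that one canonical assignment to the one required here. With that settled, the remaining verification that no two adjacent edges share a colour is routine.
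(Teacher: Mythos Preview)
Your proof is correct and follows essentially the same approach as the paper: use Remark~\ref{coloringComplete} to colour each copy of $K_{2r+1}$ so that the single missing colour at each vertex agrees with that of its $M$-partner, then extend across $M$. The paper compresses this into a one-line assertion without the case split, whereas you spell out the two cases and the relabelling argument that justifies the synchronisation; in particular your Case~1 reduction to Lemma~\ref{l:regularUnionPerfectMatching} is a nice shortcut.
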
 

\begin{proof}
Let $M$ be a matching of size $2r+1$ added to $K_{2r+1} \cup K_{2r+1} \cup K_1$. By Remark~\ref{coloringComplete}, let $c$ be a proper $(2r+1)$-edge coloring of $K_{2r+1} \cup K_{2r+1} \cup K_1$ such that for each $uv \in M$, there exists color $c_i$ not appearing in $u$ and $v$. So, we color $uv$ with $c_i$. Hence, $G$ is of Class 1.
\end{proof}

\begin{lemma}\label{l:regular2}
Let $r$ be a positive integer. If we add a matching of size $3r+1$ to $K_{2r+1} \cup K_{2r+1} \cup K_{2r+1}$ and call the resulting graph by $G$, then $G$ is of Class $2$ and $\es(G) \leq r$.
\end{lemma}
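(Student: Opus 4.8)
The plan is to first settle the Class $2$ claim by an edge count, and then to produce a mitigating set $F$ with $|F|=r$ by explicitly $\Delta(G)$-edge-colouring $G\setminus F$. Write $A,B,C$ for the three copies of $K_{2r+1}$. Since $|V(G)|=6r+3$ is odd and the added matching has size $3r+1=\lfloor|V(G)|/2\rfloor$, it is a near-perfect matching whose edges each join two distinct copies, so it covers every vertex but one, say $w$; by symmetry assume $w\in C$. Counting the matching edges between the copies (using that every vertex of $A$ and of $B$ is covered and only $w\in C$ is uncovered) forces the matching to consist of exactly $r+1$ edges between $A$ and $B$, $r$ edges between $A$ and $C$, and $r$ edges between $B$ and $C$. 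For the Class $2$ claim, $\Delta(G)=2r+1$ and $|E(G)|=3\binom{2r+1}{2}+(3r+1)=6r^2+6r+1>6r^2+5r+1=\tfrac{(|V(G)|-1)\Delta(G)}{2}$, so Lemma~\ref{Bondy} gives $\chi'(G)=\Delta(G)+1=2r+2$.

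For the bound $\es(G)\le r$, let $F$ consist of $r$ of the $r+1$ matching edges between $A$ and $B$, so that in $G\setminus F$ the surviving matching edges are exactly one $A$--$B$ edge, all $r$ $A$--$C$ edges, and all $r$ $B$--$C$ edges. The idea is to colour each copy via Remark~\ref{coloringComplete}: for any bijective labelling $\ell_X\colon V(X)\to\{1,\dots,2r+1\}$ of a copy $X\in\{A,B,C\}$, there is a proper $(2r+1)$-edge-colouring of $X=K_{2r+1}$ in which each vertex $v$ misses exactly the colour $\ell_X(v)$; then a surviving matching edge $xy$ can be given a single colour consistently precisely when $\ell$ assigns $x$ and $y$ the same label. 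So the crux is to choose the three bijections so that the two endpoints of every surviving matching edge receive the same label. Here I would use the slack created by $w$ and by the deleted edges: after relabelling so that the $A$--$C$ edges are $a_ic_i$ and the $B$--$C$ edges are $b_ic_i'$ ($1\le i\le r$), set $\ell_C(c_i)=i$, $\ell_C(c_i')=r+i$, $\ell_C(w)=2r+1$, and $\ell_A(a_i)=i$, $\ell_B(b_i)=r+i$; then the remaining $r+1$ vertices of $A$ must take the labels $\{r+1,\dots,2r+1\}$ and those of $B$ the labels $\{1,\dots,r,2r+1\}$, whose only common value is $2r+1$, so colouring the unique surviving $A$--$B$ edge forces both of its endpoints to get label $2r+1$, and the rest can be distributed arbitrarily.

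It then remains to assemble the colouring: colour each copy $X$ by the $(2r+1)$-edge-colouring from Remark~\ref{coloringComplete} associated with $\ell_X$, and colour each surviving matching edge $xy$ with the common label of its endpoints. Properness is routine: at a vertex $v$ of a copy the clique edges use exactly $\{1,\dots,2r+1\}\setminus\{\ell(v)\}$ while the (at most one) matching edge at $v$ uses $\ell(v)$, and the surviving matching edges are pairwise non-adjacent. Hence $\chi'(G\setminus F)\le 2r+1<\chi'(G)$ with $|F|=r$, so $\es(G)\le r$.

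I expect the main obstacle to be the construction of the labellings $\ell_A,\ell_B,\ell_C$: a naive attempt to ``align'' all matching edges simultaneously is impossible, since contracting each matching edge to a single vertex turns the three copies into a multigraph on three nodes whose triangle part has total edge-multiplicity $3r+1$ and therefore needs $3r+1>2r+1$ colours; deleting $r$ of the $A$--$B$ matching edges is exactly what brings that multiplicity down to $2r+1$ and, together with the free label made available by the uncovered vertex $w$, makes the remaining matching colour-consistent. The Class $2$ verification and the properness check are routine.
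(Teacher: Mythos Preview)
Your argument is correct, and the Class~$2$ verification is identical to the paper's. The construction of the mitigating set, however, differs from the paper's in an instructive way.

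The paper orders the three pairwise matchings as $|M_3|\le |M_2|\le |M_1|$ and deletes \emph{all} of $M_3$; since $|M_3|\le\lfloor(3r+1)/3\rfloor=r$, this already gives $|F|\le r$. The crucial gain is structural: after removing $M_3$ the surviving matching edges all have one endpoint in a single copy (the one opposite $M_3$), so one may label that copy arbitrarily and simply propagate the labels along the matching to the other two copies, with no possibility of conflict. This makes the appeal to Remark~\ref{coloringComplete} essentially immediate.

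Your route first pins down the exact matching profile $(r+1,r,r)$ via the linear system coming from the near-perfect matching, and then deletes $r$ of the $r{+}1$ edges in the largest part. This leaves edges between all three pairs of copies, so the labellings on $A,B,C$ must be coordinated simultaneously; you handle this correctly by reserving the label $2r+1$ (freed up by the uncovered vertex $w$) for the lone surviving $A$--$B$ edge. Both approaches yield $\es(G)\le r$ with $|F|=r$; the paper's choice of $F$ sidesteps the three-way coordination entirely, while your explicit count of the matching sizes is sharper information than the paper records (and shows, incidentally, that the paper's $|M_3|$ is exactly $r$, not merely at most $r$).
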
 

\begin{proof}
We have $|E(G)|=3 \frac {2r(2r+1)}{2}+(3r+1)> \frac {(6r+2)(2r+1)}{2}=\frac {(n-1) \Delta(G)}{2}$. Thus, by Lemma~\ref{Bondy}, $G$ is of Class $2$.
Now suppose that the vertex sets of three $K_{2r+1}$ are $X$, $Y$ and $Z$. Let $M_1$, $M_2$ and $M_3$ be all edges between $X$ and $Y$, $X$ and $Z$, $Y$ and $Z$, respectively and with no loss of generality assume that $|M_3| \leq |M_2| \leq |M_1|$. We claim that $M_3$ is a mitigating set. By Remark~\ref{coloringComplete}, let $c$ be a proper $(2r+1)$-edge coloring of $K_{2r+1} \cup K_{2r+1} \cup K_{2r+1}$ such that for each $uv \in M_1 \cup M_2$, there exists color $c_i$ not appearing in $u$ and $v$. So we color $uv$ with $c_i$. Hence, $G \setminus M_3$ is of Class 1. So, $M_3$ is a mitigating set of $G$ and $|M_3| \leq \lfloor \frac {M_1+M_2+M_3}{3} \rfloor=\lfloor \frac {3r+1}{3} \rfloor=r$ and the proof is complete.
\end{proof}

\begin{lemma}\label{l:disconnectedOddClass2}
Let $G$ be a graph of Class $2$ of odd order $n$ which is not connected. Then $\es(G) =\frac{n-3}{2}$ if and only if $G$ is isomorphic to $K_{n-2} \cup K_2$ or $K_{n-2} \cup \overline{K_2}$, for $n \geq 5$ or $K_{2m+1} \cup K_{2m+1} \cup K_1$ or $K_{2m+1} \cup K_{2m+1} \cup K_{2m+1}$, where $m$ is a positive integer.
\end{lemma}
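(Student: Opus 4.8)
The plan is to prove both directions of the equivalence, with the forward direction being the substantial one. For the reverse direction, I would check each of the four listed graphs: for $K_{n-2}\cup K_2$ and $K_{n-2}\cup \overline{K_2}$ we note that $Core(G)\cong K_{n-2}$ has odd order $n-2$, so $G$ is of Class $2$ (by Lemma~\ref{Bondy} applied to the relevant component, or directly since $\chi'(K_{n-2})=n-2=\Delta(G)+?$ — more carefully, $K_{n-2}$ has odd order so $\chi'(K_{n-2})=n-3$, giving $\chi'(G)=n-3=\Delta(G)$, wait — I must recheck: $\Delta(G)=n-3$ and $K_{n-2}$ odd forces Class $2$, so actually $\chi'(K_{n-2})=n-2$). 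Then Lemma~\ref{l:complete} gives $\es(K_{n-2})=\frac{n-3}{2}$ and one checks the extra component ($K_2$ or isolated vertex) cannot be used to reduce the count, so $\es(G)=\frac{n-3}{2}$. For $K_{2m+1}\cup K_{2m+1}\cup K_1$, Lemma~\ref{l:almostRegular} (its complement description) already gives $\es=\frac{n-3}{2}$; alternatively argue directly via the color-class counting bound as in that lemma's proof. For $K_{2m+1}\cup K_{2m+1}\cup K_{2m+1}$ (here $n=6m+3$, $\Delta=2m$, $\frac{n-3}{2}=3m$), Theorem~\ref{p:extremeClass2} gives $\es\le 3m$ and the counting bound $(2m)\cdot\frac{n-1}{2}\ge |E(G)|-|M|$ forces $|M|\ge 3m$, so $\es=3m$.

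For the forward direction, assume $G$ is a disconnected Class $2$ graph of odd order $n$ with $\es(G)=\frac{n-3}{2}$. The strategy is to control $Core(G)$ and the component structure. First I would let $H=Core(G)$; by Lemma~\ref{l:esLessthenA}, $\frac{n-3}{2}=\es(G)\le\alpha'(H)\le\lfloor |V(H)|/2\rfloor$, so $|V(H)|\ge n-3$. Since $n$ is odd, $|V(H)|\in\{n-3,n-2,n-1,n\}$. I would handle these cases much as in the proof of Theorem~\ref{th:notClass2}: when $|V(H)|$ is small relative to $n$, a maximum matching $M$ of $H$ leaves several vertices uncovered, and removing all but one or two edges of $M$ makes $Core$ a forest, so by Lemma~\ref{Fournier} we get a mitigating set of size $\le\frac{n-3}{2}-1$, a contradiction — this forces $H$ to be quite dense, pushing toward $H$ being a disjoint union of cliques $K_{\text{odd}}$ or a single $K_{n-1}$ or $K_{n-2}$. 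The disconnectedness of $G$ then restricts how these cliques attach to the rest, yielding the listed possibilities. When $G$ is regular I would reuse the descent argument from Case 3 of Theorem~\ref{th:notClass2}: repeatedly strip a perfect matching (available by Theorem~\ref{th:perfectMatching}) until reaching a disconnected regular Class $2$ graph, which must be $K_{2m+1}\cup K_{2m+1}\cup K_{2m+1}$ (the only disconnected odd-order regular graph meeting the degree bound $r\ge\frac{n-3}{?}$); then Lemma~\ref{l:regularUnionPerfectMatching} or Lemma~\ref{l:almostRegular2} shows the previous graph in the chain would be Class $1$, forcing the chain to have length zero, i.e.\ $G$ itself is $K_{2m+1}\cup K_{2m+1}\cup K_{2m+1}$.

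More concretely, I expect the case analysis to split along: whether $G$ has an isolated vertex or a $K_2$-component (leading to $K_{n-2}\cup K_2$ or $K_{n-2}\cup\overline{K_2}$ once one shows the big component must be $K_{n-1}$ — wait, of order $n-2$ or $n-1$; the arithmetic $\es$ of a clique component plus the leftover must total $\frac{n-3}{2}$ pins down the clique to be $K_{n-2}$); whether $G$ is regular (the triple-clique case via Lemmas~\ref{l:regular2} and~\ref{l:regularUnionPerfectMatching}); and whether $G$ is the near-regular complement graph of Lemma~\ref{l:almostRegular}, realized as $K_{2m+1}\cup K_{2m+1}\cup K_1$. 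In each non-regular case the key computation is the color-class counting inequality: if $\es(G)<\frac{n-3}{2}$ then some $M$ with $|M|\le\frac{n-5}{2}$ has $\chi'(G\setminus M)=\Delta(G)$, so $|E(G)|-|M|\le\Delta(G)\cdot\frac{n-1}{2}$, and comparing with $|E(G)|$ yields a contradiction unless $G$ has exactly the prescribed edge count — this is precisely the mechanism of Lemmas~\ref{l:regular},~\ref{l:almostRegular}, which I would invoke or mimic.

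The main obstacle I anticipate is the bookkeeping in the case $|V(H)|\in\{n-2,n-1\}$ combined with disconnectedness: one must rule out "hybrid" graphs where a dense clique-like core coexists with a small second component that is neither $K_1$ nor $K_2$, and show that the only surviving configurations are the four listed. The delicate point is that making $Core$ a forest via Lemma~\ref{Fournier} requires removing enough matching edges, and one has to verify the bound $\frac{n-3}{2}-1$ is actually beaten — this needs the uncovered-vertex count in $H$ to be at least two (forcing $Core(G\setminus S)$ to be a forest after deleting $|M|-1$ edges), which is automatic when $|V(H)|\le n-3$ but needs the extra-component vertices in $V(G)\setminus V(H)$ to be counted correctly when $|V(H)|\in\{n-2,n-1\}$. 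I would resolve this by carefully tracking which vertices of $G$ lie outside $V(H)$ and noting they have degree $<\Delta(G)$, so they never obstruct the "forest core" argument; the residual cases then collapse to the claimed list. The regular subcase is, by contrast, essentially a transcription of the Theorem~\ref{th:notClass2} argument with $K_{2m+1}$-triples replacing $K_{2m+1}$-pairs.
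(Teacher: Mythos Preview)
Your approach is genuinely different from the paper's, and parts of it do not work.

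The paper's forward direction is far more direct and never touches $Core(G)$ at all. Since $G$ is disconnected with components $H_1,\dots,H_k$, one observes that for each component either $\chi'(H_i)\le\Delta(G)$ (take $M_i=\varnothing$) or $\chi'(H_i)=\Delta(G)+1$, in which case Theorem~\ref{p:extremeClass2} gives a set $M_i$ with $|M_i|\le\frac{|V(H_i)|-1}{2}$ dropping $\chi'(H_i)$. Summing, $\frac{n-3}{2}=\es(G)\le\sum|M_i|\le\frac{n-k}{2}$, so $k\le 3$. For $k=3$ all inequalities are equalities, forcing each $H_i$ to be $K_1$ or $K_{\Delta(G)+1}$; for $k=2$ one component has even order and Corollary~\ref{co:notClass2} forces it to have order $2$, pinning the other as $K_{n-2}$. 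Disconnectedness is used immediately and decisively.

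By contrast, your plan imports the $Core(G)$ machinery and the perfect-matching descent from the proofs of Theorems~\ref{th:notClass2} and~\ref{th:oddClass2}. Those arguments are designed for the \emph{connected} case; in particular, Theorem~\ref{th:perfectMatching} requires connectedness, so ``repeatedly strip a perfect matching'' is not available here. In the paper's logical structure this lemma is a \emph{prerequisite} for Theorem~\ref{th:oddClass2}, not a consequence of its methods.

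There is also a concrete error in your reverse direction for $K_{2m+1}\cup K_{2m+1}\cup K_{2m+1}$. Your counting bound $\Delta(G)\cdot\frac{n-1}{2}\ge |E(G)|-|M|$ gives only $|M|\ge m$, not $|M|\ge 3m$: here $|E(G)|=3m(2m+1)$ and $\Delta(G)\cdot\frac{n-1}{2}=2m(3m+1)$, so the inequality yields $|M|\ge m$. The correct lower bound comes from the component structure: each copy of $K_{2m+1}$ independently needs $\es(K_{2m+1})=m$ edges removed, totalling $3m$. (Also, Lemma~\ref{l:almostRegular} concerns a connected graph and does not apply to $K_{2m+1}\cup K_{2m+1}\cup K_1$.)
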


\begin{proof}
By Theorem~\ref{p:extremeClass2}, it is not hard to see that all graphs given in the statement of the lemma are of Class 2, have odd order and chromatic edge stability index equal to $\frac{n-3}{2}$.

For the reverse direction, suppose that $G$ has $k$ components $H_1, \ldots , H_k$. For each $H_i$, $i \in [k]$, either $\chi'(H_i) \leq \Delta(G)$ or $\chi'(H_i)=\Delta(G)+1$ and by Theorem~\ref{p:extremeClass2}, $\es(H_i) \leq \frac{|V(H_i)|-1}{2}$. So for every $i \in [k]$, there exists $M_i \subset E(H_i)$ such that $\chi'(H_i \setminus M_i) \leq \Delta(G)$ and $|M_i| \leq \frac{|V(H_i)|-1}{2}$, and we may assume that $M_i$ is a minimum such set (in particular, if $\chi'(H_i)\le\Delta(G)$, then $M_i=\varnothing$). Clearly, $M=M_1 \cup M_2 \cup \cdots \cup M_k$ is a mitigating set and $\es(G)=\frac {n-3}{2} \leq |M|=|M_1|+\cdots +|M_k| \leq \frac{|V(H_1)|-1}{2}+ \cdots +\frac{|V(H_k)|-1}{2} \leq \frac {n-k}{2}$. So, $k \leq 3$  and now we consider two cases:\\

\noindent {\bf Case 1.} $k=3$. $\es(G)=\frac {n-3}{2} \leq |M|=|M_1|+|M_2|+|M_3| \leq \frac{|V(H_1)|-1}{2}+\frac{|V(H_2)|-1}{2}+\frac{|V(H_3)|-1}{2}=\frac {n-3}{2}$. So $|M_1|=\frac{|V(H_1)|-1}{2}$ and $|M_2|=\frac{|V(H_2)|-1}{2}$ and $|M_3|=\frac{|V(H_3)|-1}{2}$. If $\chi'(H_i) \leq \Delta(G)$, for $1\leq i \leq 3$, then $|M_i|=\frac{|V(H_i)|-1}{2}=0$ and hence $|V(H_i)|=1$. If $\chi'(H_i)= \Delta(G)+1$, for $1\leq i \leq 3$, then Theorem~\ref{p:extremeClass2} yields that $H_i \cong K_{\Delta(G)+1}$. So $H_i \cong K_1$ or $H_i \cong K_{\Delta(G)+1}$. Since $G$ is of Class 2, it is isomorphic to $K_{n-2} \cup \overline{K_2}$ or $K_{2m+1} \cup K_{2m+1} \cup K_1$ or $K_{2m+1} \cup K_{2m+1} \cup K_{2m+1}$, where $m$ is a positive integer, as desired.\\

\noindent {\bf Case 2.} $k=2$. If $|M_1| \leq \frac{|V(H_1)|-3}{2}$ or $|M_2| \leq \frac{|V(H_2)|-3}{2}$, then $|M| \leq \frac {n-4}{2} < \frac {n-3}{2}=\es(G)$, a contradiction. So $|M_i|=\frac{|V(H_i)|-2}{2}$ or $|M_i|=\frac{|V(H_i)|-1}{2}$, for $1\leq i \leq 2$. With no loss of generality suppose that $|M_1|=\frac{|V(H_1)|-1}{2}$ and $|M_2|=\frac{|V(H_2)|-2}{2}$. By Corollary~\ref{co:notClass2}, there is no connected graph of even order of Class 2 with $\es(G)=\frac{|V(H_2)|-2}{2}$. So, $\chi'(H_2) \leq \Delta(G)$ and then $|M_2|=\frac{|V(H_2)|-2}{2}=0$ and hence $|V(H_2)|=2$. Since $G$ is of Class 2, $H_1$ is of Class 2 and by Theorem~\ref{p:extremeClass2}, $H_1$ is a complete graph of odd order. Hence, $G$ is isomorphic to $K_{n-2} \cup K_2$, as desired.
\end{proof}

For edge $e$ denote $K_n \setminus e$ by $K_n^-$.

\begin{theorem}\label{th:oddClass2}
Let $G$ be a graph of Class $2$ of odd order $n$. Then $\es(G) =\frac{n-3}{2}$ if and only if $G$ is isomorphic to $K_n^-$ or $K_{n-2} \cup K_2$ or $K_{n-2} \cup \overline{K_2}$ or $\overline {K_2 \cup \ldots \cup K_2\cup K_{1,2}}$ or an $(n-3)$-regular graph, for $n \geq 5$ or $K_{2m+1} \cup K_{2m+1} \cup K_1$ or $K_{2m+1} \cup K_{2m+1} \cup K_{2m+1}$, where $m$ is a positive integer.
\end{theorem}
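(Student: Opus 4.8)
The plan is to prove the two implications separately, doing the reverse (``if'') direction first since it is essentially bookkeeping. Membership of $K_{n-2}\cup K_2$, $K_{n-2}\cup\overline{K_2}$, $\overline{K_2\cup\cdots\cup K_2\cup K_{1,2}}$, of an $(n-3)$-regular graph, and of $K_{2m+1}\cup K_{2m+1}\cup K_1$ and $K_{2m+1}\cup K_{2m+1}\cup K_{2m+1}$ has already been verified in Lemmas~\ref{l:disconnectedOddClass2},~\ref{l:almostRegular} and~\ref{l:regular}. The only graph needing fresh treatment is $K_n^-$: here $\Delta(K_n^-)=n-1$ and $|E(K_n^-)|=\binom n2-1$, which for odd $n\ge 5$ exceeds $(n-1)\cdot\tfrac{n-1}{2}$, so $K_n^-$ admits no proper $(n-1)$-edge-colouring and is of Class $2$; the bound $\es(K_n^-)\le\tfrac{n-3}{2}$ is immediate from Theorem~\ref{p:extremeClass2} (as $K_n^-\not\cong K_n$), while the matching lower bound is the edge-counting estimate already used in Lemma~\ref{l:regular}: after deleting $\tfrac{n-5}{2}$ edges there remain more than $(n-1)\cdot\tfrac{n-1}{2}$ edges, which cannot be $(n-1)$-edge-coloured.

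For the forward direction let $G$ be of Class $2$ with $n$ odd and $\es(G)=\tfrac{n-3}{2}$. If $G$ is disconnected, Lemma~\ref{l:disconnectedOddClass2} already lists every possibility, so assume $G$ is connected and put $H=Core(G)$, $a=|V(H)|$. By Lemma~\ref{l:esLessthenA}, $\tfrac{n-3}{2}=\es(G)\le\alpha'(H)\le\lfloor a/2\rfloor$, hence $a\in\{n-3,n-2,n-1,n\}$. The value $a=n-3$ is impossible: then $\alpha'(H)=\tfrac{n-3}{2}$, so $H$ has a perfect matching $M$, and deleting all but one edge of $M$ from $G$ leaves only the two endpoints of the surviving edge at maximum degree, so the core of the resulting graph is a forest; by Lemma~\ref{Fournier} that graph is of Class $1$, forcing $\es(G)\le\tfrac{n-5}{2}$, a contradiction.

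The cases $a=n-2$ and $a=n-1$ I would handle by the matching-swap technique of Theorem~\ref{th:notClass2}: using that deleting all but one (resp.\ all but two) edges of a maximum matching of $H$ is a deletion of exactly $\tfrac{n-5}{2}$ edges that leaves very few vertices at maximum degree, so Lemma~\ref{Fournier} would give $\es(G)\le\tfrac{n-5}{2}$ unless those few vertices span a non-forest. When $a=n-2$ this forces the vertex missed by a near-perfect matching of $H$ to be adjacent in $H$ to everything, and iterating the swap yields $H\cong K_{n-2}$; since every vertex of $H$ then has at most $\Delta(G)-(n-3)\le 2$ neighbours outside $H$, we get $\Delta(G)\in\{n-3,n-2,n-1\}$, and $\Delta(G)=n-3$ violates connectedness, $\Delta(G)=n-2$ makes $G$ either of Class $1$ (outside vertices non-adjacent) or forces $\es(G)=1$ (outside vertices adjacent, delete the edge between them; this subcase is empty when $n=5$), and $\Delta(G)=n-1$ forces each outside vertex to be joined to all of $H$, giving $G\cong K_n^-$. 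When $a=n-1$ a more delicate version of the same analysis — exploiting that all vertices of $H$ share the common degree $\Delta(G)$ in $G$, hence have degree $\Delta(G)$ or $\Delta(G)-1$ in $H$ — narrows $H$ down to $K_{n-1}$ with a small matching removed and pins down the attachment of the unique deficient vertex; the survivors that do not already give $G\cong\overline{K_2\cup\cdots\cup K_2\cup K_{1,2}}$ are then discarded because each admits a mitigating set of fewer than $\tfrac{n-3}{2}$ edges, which a direct colouring argument exhibits.

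The last case, $a=n$ (so $G$ is $r$-regular), is where I expect the real work. Here $r$ is even and $r\ne n-1$ because $\es(K_n)=\tfrac{n-1}{2}\ne\tfrac{n-3}{2}$, and since by Remark~\ref{r:C1} every colour class in any $\chi'(G)$-edge-colouring has at least $\es(G)=\tfrac{n-3}{2}$ edges, summing over the $r+1$ classes gives $\tfrac{nr}{2}\ge(r+1)\tfrac{n-3}{2}$, i.e.\ $r\ge\tfrac{n-3}{3}$. The obstacle is to close the gap between $r\ge\tfrac{n-3}{3}$ and the target $r=n-3$: one must show that a connected $r$-regular graph of odd order with $r\le n-5$ already satisfies $\es(G)\le\tfrac{n-5}{2}$. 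The natural route is to produce a deletion set of size $\tfrac{n-5}{2}$ whose removal makes the core a forest (or a disjoint union of unicyclic graphs and trees that is not a union of cycles, so that Lemma~\ref{Fournier} or Lemma~\ref{Akbari} applies); concretely, to find a $5$-vertex induced forest $S$ in $G$ with $G-S$ having a perfect matching, or to delete two edges of a near-perfect matching so that the five untouched vertices span an acyclic graph — proving that such a configuration always exists when $r\le n-5$ is the crux. Once $r=n-3$ is forced, Lemma~\ref{l:regular} confirms $\es(G)=\tfrac{n-3}{2}$ and completes the proof.
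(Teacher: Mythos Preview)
Your reverse direction and Cases $a=n-3$, $a=n-2$ are essentially the paper's argument (modulo one slip: in the $\Delta(G)=n-2$ subcase, $\es(G)=1$ forces $n=5$, not $n\ne 5$; the paper then observes $G\cong\overline{K_2\cup K_{1,2}}$ has $a=n-1$, contradicting $a=n-2$).

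The genuine gap is in the regular case $a=n$. Your proposed attack---find a $5$-vertex induced forest whose complement has a perfect matching---is not what the paper does, and you yourself flag the existence of such a configuration as unproved. The paper's method is quite different and uses ingredients you do not mention. First it strengthens your bound $r\ge\tfrac{n-3}{3}$ to $r\ge\tfrac{n-1}{2}$ (via $\alpha'(G)=\tfrac{n-1}{2}$, established through Lemma~\ref{Akbari}). Then, fixing $u$ with $|V(G)\setminus N[u]|\ge 4$, it deletes the star $T$ at $u$ together with a near-perfect matching $M$ of the non-neighbourhood; the core of the remainder is a forest, so it has a proper $r$-edge-colouring, and crucially the paper takes a \emph{balanced} such colouring (Lemma~\ref{BalancedColoring}). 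Balance forces each colour to miss at most two vertices besides $u$, so at least half the edges of $T$ can be coloured back in with distinct missing colours; a further Kempe-chain argument squeezes one more edge back, producing a mitigating set of size $\le\tfrac{n-5}{2}$. Without the balanced-colouring idea your approach has no mechanism to recolour the deleted star.

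Your Case $a=n-1$ is also off-track. The paper does not determine $H$ structurally; instead it shows $\alpha'(H)=\tfrac{n-1}{2}$ (a perfect matching of $H$), then uses Lemma~\ref{perfectMatchingCore} to peel off perfect matchings of the core repeatedly, reducing to a disconnected graph or to the regular Case~3. The eventual conclusion $G\cong\overline{K_2\cup\cdots\cup K_2\cup K_{1,2}}$ comes from landing on an $(n-3)$-regular graph after exactly one peel; the other possible landings are ruled out by Lemmas~\ref{l:almostRegular2} and~\ref{l:regular2}. Your ``narrow $H$ down to $K_{n-1}$ minus a small matching'' sketch does not engage with this reduction and is too vague to assess.
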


\begin{proof}
By Theorem~\ref{p:extremeClass2} and Lemmas~\ref{l:regular}, \ref{l:almostRegular} and~\ref{l:disconnectedOddClass2}, it is not hard to see that all graphs given in the statement of the theorem are of Class 2, have odd order with chromatic edge stability index equal to $\frac{n-3}{2}$.

If $G$ is not connected, then by Lemma~\ref{l:disconnectedOddClass2}, it is isomorphic to $K_{n-2} \cup K_2$ or $K_{n-2} \cup \overline{K_2}$, for $n \geq 5$ or $K_{2m+1} \cup K_{2m+1} \cup K_1$ or $K_{2m+1} \cup K_{2m+1} \cup K_{2m+1}$, where $m$ is a positive integer, as desired.

Now, suppose that $G$ is connected and let $H=Core(G)$.
It follows from Lemma~\ref{l:esLessthenA} that $\frac{n-3}{2}=\es(G) \leq \alpha'(H) \leq \frac{a}{2}$, where $a=|V(H)|$ which implies $a \geq n-3$. Now, we consider four cases:\\

\noindent {\bf Case 1.} $a=n-3$. Since $\alpha'(H) \geq \frac{n-3}{2}$, there exists a perfect matching $M$ in $H$. If we choose an arbitrary set $F$ of $\alpha'(H)-1$ edges from $M$, then $Core(G \setminus F)$ is a forest and hence by Lemma~\ref{Fournier}, $\chi'(G \setminus F) = \Delta(G \setminus F)=\Delta(G) < \chi'(G)$. Therefore, $\es(G) \leq \alpha'(H)-1=\frac{n-3}{2}-1$, a contradiction.\\

\noindent {\bf Case 2.} $a=n-2$. Let $V(H)=\{v_1,\ldots , v_{n-2}\}$, and let $M$ be a maximum matching of $H$. Since $\alpha'(H)= \frac{n-3}{2}$, we have $|V(H)\setminus V(M)|=1$, and let $v_1\in V(H)\setminus V(M)$. Assume that there exists an edge $v_{2}v_{3}\in M$ such that $v_{1}v_2\notin E(H)$ or $v_1v_3 \notin E(H)$. Then $Core(G \setminus (M \setminus \{ v_{2}v_{3} \}))$ is a forest of order 3 and by Lemma~\ref{Fournier}, the graph $G \setminus (M \setminus \{ v_{2}v_{3}\})$ is of Class 1. Hence, $es_{\chi'}(G)<\frac{n-3}{2}$, a contradiction. Therefore $d_H(v_1)=a-1$. Let $M'=(M\setminus \{v_2v_3\}) \cup \{v_1v_2\}$ which is obviously another maximum matching of $H$ and by the same method $d_H(v_3)=a-1$. Now, by the same manner we infer that $H\cong K_{n-2}$.

Now, let $V(G) \setminus V(H) = \{x,y\}$. Since $G$ is connected, there is at least one edge between $\{x,y\}$ and $V(H)$. Hence there exists $w \in V(H)$ such that $d_G(w) \geq n-2$. Since all vertices of $H$ have the same degree in $G$, then $d_G(v) \geq n-2$, for any $v \in V(H)$. Suppose that there exists $w \in V(H)$ that is adjacent to both $x$ and $y$. Therefore $d_G(v)=n-1$ for all $v \in V(H)$. Since $x,y \notin Core(G)$, $x$ and $y$ are not adjacent. Thus $G \cong K_n^-$.

Therefore, it remains to consider the case when $\Delta(G)=n-2$ and each vertex $v \in V(H)$ is adjacent to exactly one vertex from $\{x,y\}$.  By Remark~\ref{coloringComplete}, there exists a proper edge coloring $c'$ of $H=K_{n-2}$ with $\Delta(H)+1=n-2$ colors such that for each $v_i \in V(H)$,  $c_i \notin c'(v_i)$, and $c_j \in c'(v_i)$ for any $j \neq i$. Now, we extend this coloring to a proper edge coloring $c$ of $E(G)$ as follows. If $e\in E(H)$, then define $c(e)=c'(e)$. If $v_i \in V(H)$ is adjacent to $x$ in $G$, then define $c(xv_i)=c_i$, and if $v_i \in V(H)$ is adjacent to $y$ in $G$, then define $c(yv_i)=c_i$. Since $\chi'(G) > \Delta(G)$, $xy \in E(G)$. Indeed, if $xy \notin E(G)$, $c$ is a proper edge coloring of $G$ with $\Delta(G)$ colors. Hence $G$ is a graph with $\es(G)=1$. Since $\es(G)=\frac{n-3}{2}$ we deduce that $n=5$ and so $G \cong \overline {K_2 \cup K_{1,2}}$. Hence $a=4=n-1$, a contradiction.\\

\noindent {\bf Case 3.} $a=n$. Then $G$ is a connected $r$-regular graph. We start by showing that $\alpha' (G) = \frac {n-1}{2}$. By contradiction assume that $\alpha' (G) \leq \frac {n-3}{2}$. Let $M$ be a matching of size $\alpha'(G)$. Let $e \in M$ and $S=V(G) \setminus V(M)$. Note that $S$ is an independent set. If for every $u \in S$, there is at most one edge between $u$ and vertices of $e$, then $Core(G \setminus (M \setminus e))$ is a forest and so by Lemma~\ref{Fournier}, $M \setminus e$ is a mitigating set of size at most $\frac {n-5}{2}$, a contradiction. So there exists $u \in S$ such that $u$ is adjacent to both endvertices of $e$. Every $u' \in S\setminus\{u\}$ is not adjacent to an endvertex of $e$, since $M$ is a maximum matching. Therefore, $Core(G \setminus (M \setminus e))$ is a unicyclic graph, thus it enjoys the conditions of Lemma~\ref{Akbari}. Hence $M \setminus \{e\}$ is a mitigating set of size at most $\frac {n-5}{2}$, a contradiction. Thus $\alpha' (G)=\frac {n-1}{2}$.
$\vspace{2mm}$

Now, we show that $r \geq \frac {n-1}{2}$. Let $V(G) \setminus V(M)=\{u\}$. If $d(u)=r<\frac {n-1}{2}$, then there exists $ab \in M$ such that $u$ is adjacent to no $a$ and $b$. For every $cd \in M\setminus\{ab\}$, $G[\{a,b,c,d\}]$ has at least 4 edges, because otherwise $Core(G \setminus (M \setminus \{ ab , cd \}))$ is a unicyclic graph or a tree, and so it enjoys the conditions in Lemma~\ref{Akbari}. Hence $M \setminus \{ ab, cd \}$ is a mitigating set of size $\frac {n-5}{2}$, a contradiction. So $G[\{a,b,c,d\}]$ has at least 4 edges and hence, $d(a)+d(b) \geq 2 \frac {n-3}{2}+2$. So $r \geq \frac {n-1}{2}$ since $d(a) \geq \frac {n-1}{2}$ or $d(b) \geq \frac {n-1}{2}$.
$\vspace{2mm}$

Now, let $u \in V(G)$. Clearly, $|V(G) \setminus N[u]|$ is even. If $|V(G) \setminus N[u]|=0$, then $G=K_n$ and by Theorem~\ref{p:extremeClass2}, $\es(G)=\frac{n-1}{2}$, a contradiction. Else if $|V(G) \setminus N[u]|=2$, then $G$ is an $(n-3)$-regular graph, where $n \geq 5$, as desired. Thus $|V(G) \setminus N[u]| \geq 4$. Now, we consider two cases: either $G[V(G) \setminus N[u]]$ has at least one edge or $V(G) \setminus N[u]$ is an independent set. \\

\noindent {\bf Subcase 3.1.} In this case, $G[V(G) \setminus N[u]]$ has at least one edge. Since $G$ is connected, there exists $vw,vy \in E(G)$ such that $w \in N(u)$ and $v,y \in V(G) \setminus N[u]$. Let $M'$ be a maximum matching of $G[V(G) \setminus N[u]]$ containing $vy$. If $M'$ saturates all vertices of $V(G) \setminus N[u]$, then let $e' \in M'\setminus\{vy\}$ and define $M=M' \setminus \{e'\}$, otherwise define $M=M'$. Let $T$ be the set of all edges incident with $u$. Hence, $Core(G \setminus (M \cup T))$ is a forest and so by Lemma~\ref{Fournier}, $M \cup T$ is a mitigating set.

By Lemma~\ref{BalancedColoring}, let $c$ be a balanced edge coloring of $G \setminus (M \cup T)$. Clearly, $c$ uses exactly $r$ colors. Now, there exists a color $c_i$ in $c$ such that $c_i$ appears in all vertices of $V(G) \setminus \{u\}$, because otherwise for each $j \in [r]$, $|C_j| \leq \frac {n-3}{2}$ and hence $\frac {n-3}{2}r \geq |E(G \setminus (M \cup T))| \geq \frac {(n-1)(r-1)+2}{2}$. This yields $r \leq \frac {n-3}{2}$, a contradiction. Since $c$ is a balanced edge coloring, for each $j \in [r]$, $c_j$ appears in all vertices of $V(G) \setminus \{u\}$ or not appearing in exactly two vertices of $V(G) \setminus \{u\}$.

Define $f_c:V(G) \to \{c_1,\ldots , c_{r},\infty\}$ such that if $d_{G \setminus (M \cup T)}(z)=r-1$, then $f_{c}(z)=c_i$, where $c_i \notin c(z)$, otherwise $f_{c}(z)=\infty$. Let $L_{c}$ be a maximum subset of $N(u)$ such that the restriction of $f_c$ to $L_c$ is a one-to-one function. Let $N_{c}=N(u) \setminus L_{c}$. (By $N_c'$, respectively $L_c'$, we denote the set of edges incident with $u$ and with a vertex of $N_c$, respectively $L_c$.)
Now, let $c'$ be a proper $r$-edge coloring of $G \setminus (N_c' \cup M)$ such that $c'(uu')=f_c(u')$, for each $uu' \in L_c'$ and $c'(e)=c(e)$, for each $e \in E(G \setminus (T \cup M))$. So, $N_{c}' \cup M$ is a mitigating set of $G$.

For each $a \in N(u)$, there is at most one vertex $b \in N(u) \setminus \{a\}$, such that $f_{c}(a)=f_{c}(b)$, because $c$ is a balanced edge coloring. So, $|L_{c}| \geq \frac{r}{2}$ and $|N_{c}| \leq \frac {r}{2}$. On the other hand we have $|M| \leq \frac {n-r-3}{2}$. Hence, $\es(G) \leq |N_{c}' \cup M| \leq \frac {n-3}{2}=\es(G)$. Thus, $|N_{c}|=\frac{r}{2}$, $|M|= \frac {n-r-3}{2}$ and we conclude that $f_{c}(a) \neq f_{c}(b)$, for every $a \in N(u)$, $b \in V(G) \setminus N[u]$.

Now, with no loss of generality suppose that $f_{c}(w)=c_1$ and $f_{c}(v)=c_2$ and $c_1 \neq c_2$. So, there exists $w' \in N(u)\setminus\{w\}$ and $v' \in V(G) \setminus N[u]$, where $v'\ne v$, such that $f_{c}(w')=c_1$ and $f_{c}(v')=c_2$ respectively. By Remark~\ref{r:path}, let $c^1$ be a proper $r$-edge coloring of $G \setminus (M \cup T)$ obtained by replacing $c_1$ and $c_2$ in $P_v(c_1,c_2)$ of $c$. Clearly, $P_v(c_1,c_2)$ in coloring $c$, ends in $w$ or $w'$ or $v'$. If it ends in $w$ or $w'$, then $|L_{c^1}|=|L_{c}|+1= \frac{r}{2}+1$. So $M \cup N'_{c^1}$ is a mitigating set of size $\frac {n-5}{2}$, a contradiction.

If $P_v(c_1,c_2)$ ends in $v'$, then suppose that $c^1(vw)=c_3$. Let $c^2$ be a proper $r$-edge coloring of $G \setminus (M \cup T)$ such that $c^2(vw)=c_1$ and $c^2(e)=c^1(e)$, for any $vw \neq e \in E(G \setminus (M \cup T))$. Clearly, for any $a \in V(G)$, then $f_{c^2}(a) \neq c_2$. If there is no $a \in N(u)\setminus\{w\}$ such that $f_{c^2}(a)=c_3$, then $|L_{c^2}|=|L_{c}|+1= \frac{r}{2}+1$ and so $M \cup  N'_{c^2}$ is a mitigating set of size $\frac {n-5}{2}$, a contradiction. Otherwise, there are exactly two vertices $a,b \in N(u)\setminus\{w\}$ such that $f_{c^2}(a)=f_{c^2}(b)=c_3$ and for any $d \in V(G) \setminus N[u]$, different from $v$, we have $f_{c^2}(d) \neq c_3$, while for all $d \in V(G)$, $f_{c^2}(d) \neq c_2$. So, $P_v(c_2,c_3)$ ends in $a$ or $b$ or $w$. Let $c^3$ be a proper $r$-edge coloring of $G \setminus (M \cup T)$ by replacing $c_2$ and $c_3$ in $P_v(c_2,c_3)$ of $c^2$. Now, there is exactly one vertex $h \in N(u)$ such that $f_{c^3}(h)=c_2$. Hence, $|L_{c^3}|=|L_{c}|+1= \frac{r}{2}+1$ and $M \cup (T \setminus L'_{c^3})$ is a mitigating set of size $\frac{r}{2} \leq \frac {n-5}{2}$, a contradiction.\\

\noindent {\bf Subcase 3.2.} In this case $G[V(G) \setminus N[u]]$ is an independent set. Clearly, $Core(G \setminus T)$ is a forest and so by Lemma~\ref{Fournier}, $T$ is a mitigating set (where $T$ is again the set of all edges incident with $u$). By the same method, there is $N \subseteq T$, $|N| \leq \frac{r}{2}$ such that $N$ is a mitigating set of size $\frac {r}{2} \leq \frac {n-5}{2}$, a contradiction.\\

\noindent {\bf Case 4.} $a=n-1$. We show that $|\alpha' (Core(G))| = \frac {n-1}{2}$. Let $M$ be a maximum matching of $Core(G)$. By Lemma~\ref{Fournier}, $M$ is a mitigating set. So $|M| \geq \frac {n-3}{2} = \es(G)$. To get a contradiction, assume that $|M|= \frac {n-3}{2}$. Let $V(Core(G)) \setminus V(M)= \{ a,b \}$. Clearly, $a$ is not adjacent to $b$ so for every $cd \in M$ there are at most 3 edges in $G[a,b,c,d]$, because otherwise $M$ is not maximum matching of $Core(G)$. Hence, $Core(G \setminus (M \setminus cd))$ is a unicyclic graph or a tree, and is not a disjoint union of cycles. Since $G$ is connected, by Lemma~\ref{Akbari}, $M \setminus cd$ is a mitigating set of size at most $\frac {n-5}{2}$, a contradiction.

Thus $G$ has a matching say $M_1$, which saturates $Core(G)$. Let $G_1=G \setminus M_1$. By Lemma~\ref{perfectMatchingCore}, $G_1$ is of Class 2 and $\es(G_1) \ge \es(G)= \frac {n-3}{2}$.
Since $G_1$ is not a complete graph, then by Theorem~\ref{p:extremeClass2}, $\es(G)=\frac{n-3}{2}$. If $G_1$ is connected and $|Core(G_1)|=n-1$, then similar to the previous proof, $G_1$ has a matching $M_2$ which saturates $Core(G_1)$. By repeating this procedure in $l$ steps, we find a Class 2 regular graph $G_l$ such that $\es(G_l)=\frac{n-3}{2}$ or a Class 2 graph $G_l$ which is not connected such that $|Core(G_l)|=n-1$ and $\es(G_l)=\frac{n-3}{2}$. By Case 3 and Lemma~\ref{l:disconnectedOddClass2}, $G_l$ is an $(n-3)$-regular graph, for $n \geq 5$ or $G_l \cong K_{2m+1} \cup K_{2m+1} \cup K_{2m+1}$ or $G_l \cong K_{2m+1} \cup K_{2m+1} \cup K_1$, where $m$ is a positive integer. Note that $G_{i-1}=G_i \cup M_i$, for $i=1, \ldots , l$, where $G_0=G$. If $G_l$ is an $(n-3)$-regular graph, then $l=1$ and so $G \cong \overline {K_2 \cup \ldots \cup K_2 \cup K_{1,2}}$. If $G_l \cong K_{2m+1} \cup K_{2m+1} \cup K_1$, then by Lemma~\ref{l:almostRegular2}, $G_{l-1}$ is of Class 1, a contradiction.  If $G_l \cong K_{2m+1} \cup K_{2m+1} \cup K_{2m+1}$, then by Lemma~\ref{l:regular2}, $\es(G_{l-1}) < \frac{n-3}{2}$, a contradiction.
\end{proof}

We say that a graph $G$ of order $n$ is {\em almost regular} if it has $n-1$ vertices of the same degree.

\begin{corollary}\label{col:oddClass2}
Let $G$ be a connected non-regular and not almost regular graph of Class $2$ of odd order $n$. Then $\es(G) =\frac{n-3}{2}$ if and only if $G$ is isomorphic to $K_n^-$, for $n \geq 5$.
\end{corollary}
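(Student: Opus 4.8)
The plan is to derive Corollary~\ref{col:oddClass2} directly from the characterization in Theorem~\ref{th:oddClass2}, simply by discarding from the list all graphs that are regular, almost regular, or disconnected. First I would recall the complete list of graphs $G$ of Class $2$ of odd order $n$ with $\es(G)=\frac{n-3}{2}$: namely $K_n^-$, $K_{n-2}\cup K_2$, $K_{n-2}\cup\overline{K_2}$, $\overline{K_2\cup\cdots\cup K_2\cup K_{1,2}}$, the $(n-3)$-regular graphs (for $n\ge 5$), $K_{2m+1}\cup K_{2m+1}\cup K_1$, and $K_{2m+1}\cup K_{2m+1}\cup K_{2m+1}$. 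Since the hypothesis of the corollary requires $G$ to be connected, the three disjoint-union families $K_{n-2}\cup K_2$, $K_{n-2}\cup\overline{K_2}$, $K_{2m+1}\cup K_{2m+1}\cup K_1$ and $K_{2m+1}\cup K_{2m+1}\cup K_{2m+1}$ are immediately eliminated.

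Next I would eliminate the remaining structured families using the ``not regular'' and ``not almost regular'' hypotheses. The $(n-3)$-regular graphs are regular, hence excluded. For the graph $\overline{K_2\cup\cdots\cup K_2\cup K_{1,2}}$ on $n$ vertices, one checks (as already noted in the proof of Lemma~\ref{l:almostRegular}) that $n-1$ of its vertices have degree $n-2$ and exactly one vertex has degree $n-3$; thus it is almost regular and is excluded by hypothesis. The only graph left on the list is $K_n^-$, for $n\ge 5$, and one should verify that $K_n^-$ is neither regular nor almost regular: it has exactly two vertices of degree $n-2$ and $n-2$ vertices of degree $n-1$, so as long as $n\ge 5$ it has at least three vertices of one degree and at least two of another, hence it is not almost regular, and clearly not regular.

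For the converse direction, I would invoke Theorem~\ref{th:oddClass2} (or directly the fact that $K_n^-$ appears in its statement) to conclude that $K_n^-$ is of Class $2$, has odd order, and satisfies $\es(K_n^-)=\frac{n-3}{2}$; together with the degree computation above showing $K_n^-$ is connected, non-regular and not almost regular, this establishes the ``if'' direction.

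The only point requiring genuine care — the ``main obstacle'', though it is a mild one — is making sure that the degree sequences of the two borderline families are computed correctly, so that $\overline{K_2\cup\cdots\cup K_2\cup K_{1,2}}$ is indeed classified as almost regular while $K_n^-$ is not. Both computations are elementary (complement and near-complete graphs), and the small case $n=5$, where $\overline{K_2\cup K_{1,2}}\cong K_5^-$ might be feared to coincide, should be double-checked: in fact $\overline{K_2\cup K_{1,2}}$ on $5$ vertices has one vertex of degree $2$ and four of degree $3$, whereas $K_5^-$ has two vertices of degree $3$ and three of degree $4$, so the two graphs are distinct and only $K_5^-$ survives. With these checks in place the corollary follows immediately.
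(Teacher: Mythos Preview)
Your proposal is correct and is exactly the intended derivation: the paper states Corollary~\ref{col:oddClass2} without proof immediately after Theorem~\ref{th:oddClass2}, so the expected argument is precisely to filter the list in that theorem by the connectedness, non-regularity, and non-almost-regularity hypotheses, as you have done. Your degree-sequence checks for $\overline{K_2\cup\cdots\cup K_2\cup K_{1,2}}$ and $K_n^-$ are accurate, and the extra sanity check at $n=5$ is a nice touch.
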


In our next result we prove that Conjecture~\ref{conj1} holds also for graphs $G$ with $\es(G)=\lfloor \frac{n}{2} \rfloor -1$.

\begin{theorem}
\label{cor:n/2-1}
If $G$ is a graph of order $n$ with $\es(G)=\lfloor \frac{n}{2} \rfloor -1=k$, then there exists a $k$-matching mitigating set.
\end{theorem}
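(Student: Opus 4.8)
The plan is to extract the required matching directly from a balanced edge coloring, without analysing the structure of $G$. First I would invoke Lemma~\ref{BalancedColoring} to fix a balanced $\chi'(G)$-edge coloring $c$ of $G$ with color classes $C_1,\dots,C_{\chi'(G)}$, indexed so that $C_1$ has smallest size. By Remark~\ref{r:C1} we have $k=\es(G)\le |C_1|$, while $|C_1|\le\lfloor n/2\rfloor=k+1$ since $C_1$ is a matching. Hence $|C_1|\in\{k,k+1\}$, and if $|C_1|=k$ then $C_1$ is a matching of size $k$ whose deletion leaves a proper coloring with $\chi'(G)-1$ colors, so $C_1$ is the desired $k$-matching mitigating set and we are done.

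It remains to rule out the possibility $|C_1|=k+1=\lfloor n/2\rfloor$. In that case every color class has size at least $\lfloor n/2\rfloor$ by balancedness and at most $\lfloor n/2\rfloor$ since it is a matching, so $|E(G)|=\chi'(G)\lfloor n/2\rfloor$; I would then derive a contradiction in each of the four cases determined by the Class of $G$ and the parity of $n$, using results established earlier. If $G$ is of Class~$1$ and $n$ is even, then $|E(G)|=\Delta(G)\,n/2$ forces $G$ to be $\Delta(G)$-regular, whence $\es(G)=n/2>k$ by Corollary~\ref{cor:regClass1}. If $G$ is of Class~$1$ and $n$ is odd, then $|E(G)|=(n-1)\Delta(G)/2$, so $\es(G)=\lfloor n/2\rfloor>k$ by Lemma~\ref{l:oddClass1}. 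If $G$ is of Class~$2$ and $n$ is even, then $|E(G)|=(\Delta(G)+1)\,n/2>\Delta(G)\,n/2\ge|E(G)|$, which is absurd. Finally, if $G$ is of Class~$2$ and $n$ is odd, then $|E(G)|=(\Delta(G)+1)(n-1)/2$, and comparing with $|E(G)|\le\Delta(G)\,n/2$ gives $\Delta(G)\ge n-1$, hence $\Delta(G)=n-1$ and $G\cong K_n$, so $\es(G)=\lfloor n/2\rfloor>k$ by Lemma~\ref{l:complete}. Every case contradicts $\es(G)=k$, so $|C_1|=k$, completing the proof.

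There is no deep obstacle in this argument; its engine is simply the observation that a balanced coloring of a graph with $\es(G)=\lfloor n/2\rfloor-1$ cannot have all color classes of the maximum possible size $\lfloor n/2\rfloor$, since that would force $G$ into one of the families characterized in Theorem~\ref{t:mainExtreme} as having $\es(G)=\lfloor n/2\rfloor$. The one point requiring care is the bookkeeping in the four parity/Class subcases, in particular the odd Class~$2$ subcase, where one must notice that the edge count forces $\Delta(G)=n-1$ and hence $G=K_n$. I note in passing that this proof is self-contained and does not rely on the Class~$2$ characterization given in Theorems~\ref{th:notClass2} and~\ref{th:oddClass2}, which is a separate structural result of this section.
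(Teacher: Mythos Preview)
Your proof is correct and follows essentially the same route as the paper: assume the smallest color class has size $\lfloor n/2\rfloor$, count edges, and derive a contradiction in each of the four parity/Class subcases using the same earlier results (Corollary~\ref{cor:regClass1}, Lemma~\ref{l:oddClass1}, Lemma~\ref{l:complete}). The only difference is that you invoke Lemma~\ref{BalancedColoring}, which is harmless but unnecessary---since every color class of \emph{any} proper $\chi'(G)$-edge coloring is a mitigating matching of size between $k$ and $\lfloor n/2\rfloor$, the same dichotomy holds without balancedness, and this is exactly how the paper argues.
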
  
\begin{proof}
First, we prove that the result holds for a graph $G$ of Class 2. Suppose that there does not exist a $k$-matching mitigating set in $G$.
If $n$ is even, then $|C_i|=\frac {n}{2}$, for each color class $C_i$. So $|E(G)|=\frac {n}{2}(\Delta(G)+1)$, a contradiction. If $n$ is odd, then $|C_i|=\frac {n-1}{2}$, for each color class $C_i$. So $|E(G)|=\frac{n-1}{2}(\Delta(G)+1)=\frac{(n-1)\Delta(G)+(n-1)}{2}$. This is possible only if $G$ is isomorphic to $K_n$, which is a contradiction as $\es(K_n)=\frac{n-1}{2}$ by Lemma~\ref{l:complete}. 

Now, let $G$ be of Class 1. Let $c$ be a proper edge coloring of $G$ using $\chi'(G)=\Delta(G)$ colors. Suppose that there does not exist a $k$-matching mitigating set in $G$. If $n$ is even, then $|C_i|=\frac {n}{2}$, for each color class $C_i$. Hence $|E(G)|=\frac{n}{2}\Delta(G)$, which implies that $G$ is a regular graph. By Corollary~\ref{cor:regClass1}, $\es(G)=\frac{n}{2}$, a contradiction. If $n$ is odd, then $|C_i|=\frac {n-1}{2}$, for each color class $C_i$. Hence $|E(G)|=\frac{n-1}{2}\Delta(G)$. Then it follows from Theorem~\ref{t:mainExtreme} that $\es(G)=\frac{n-1}{2}$, a contradiction. 
\end{proof}

\section{The chromatic edge stability index in bipartite graphs}
\label{sec:bip}

In this section we show that Conjecture~\ref{conj1} holds for bipartite graphs.

\begin{theorem}
Let $G$ be a bipartite graph with $es_{\chi'}(G)=k$. Then there exists a $k$-matching mitigating set.
\end{theorem}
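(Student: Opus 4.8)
The plan is to reduce the statement to a matching problem on $H:=Core(G)$, using that bipartite graphs are of Class $1$. First, since $G$ and all of its subgraphs are bipartite, $\chi'=\Delta$ throughout by Theorem~\ref{thm:konig}. Hence $F\subseteq E(G)$ is a mitigating set if and only if $\Delta(G\setminus F)<\Delta(G)$, that is, if and only if $F$ is incident with every vertex of $H$ (a vertex outside $V(H)$ already has degree below $\Delta(G)$, so only the vertices of $V(H)$ matter). Therefore $k=\es(G)$ is exactly the minimum number of edges of $G$ covering $V(H)$, and it suffices to produce a matching of $G$ with $k$ edges that covers $V(H)$.

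I would next compute $k=|V(H)|-\alpha'(H)$. The inequality $k\le|V(H)|-\alpha'(H)$ follows by taking a maximum matching of $H$ and adding one edge incident with each vertex of $V(H)$ it misses. For the converse, take a minimum edge set $F$ covering $V(H)$; by minimality each component with at least one edge of the subgraph of $G$ with edge set $F$ must be a star, since a component containing a cycle, or a path on four vertices, would contain an edge whose deletion still leaves $V(H)$ covered. A star $K_{1,t}$ contributes $t$ to $|F|$ and at most $t+1$ of its vertices to $V(H)$, with equality only when its centre and all leaves lie in $V(H)$, in which case it contains an edge of $H$; since stars from distinct components are vertex-disjoint, picking one edge from each such ``full'' star gives a matching of $H$, whence $|V(H)|-k\le\#\{\text{full stars}\}\le\alpha'(H)$.

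It then remains to find a matching of $G$ of size exactly $|V(H)|-\alpha'(H)$ covering $V(H)$. Let $X\cup Y$ be the bipartition of $G$. Every vertex of $V(H)\cap X$ has degree $\Delta(G)$, so for each $T\subseteq V(H)\cap X$ the count $\Delta(G)|T|=e(T,N_G(T))\le\Delta(G)|N_G(T)|$ verifies Hall's condition, and $G$ has a matching saturating $V(H)\cap X$; symmetrically one exists saturating $V(H)\cap Y$. By the theorem of Mendelsohn and Dulmage, $G$ then has a single matching $M_1$ saturating all of $V(H)$. Finally I would compress $M_1$: as long as $M_1\cap E(H)$ is not a maximum matching of $H$, take an $M_1$-augmenting path $P$ inside $H$; its two ends lie in $V(H)$ but, being unsaturated by $M_1\cap E(H)$, are covered in $M_1$ by two edges leaving $V(H)$, so augmenting $M_1\cap E(H)$ along $P$ while discarding those two edges yields a matching that still saturates $V(H)$ and has one fewer edge. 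Dropping any remaining edges with no endpoint in $V(H)$ as well, one arrives at a matching $M_0$ with $|M_0|=|V(H)|-\alpha'(H)=k$ that covers $Core(G)$; hence $M_0$ is a $k$-matching mitigating set, as required.

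The main obstacle is this last construction. Hall's condition on the two sides only supplies a matching saturating $V(H)$ that may be much larger than $k$; the real work is in merging the two one-sided matchings into one (where I expect to invoke Mendelsohn--Dulmage) and then trimming the result down to exactly $k$ edges through augmenting paths in $H$, verifying at each step that the augment-and-discard operation really is a matching and that the saturation of $V(H)$ is never lost.
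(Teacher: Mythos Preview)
Your argument is correct and takes a genuinely different route from the paper's.

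The paper argues by induction on $k$: given a minimum mitigating set $M$ of size $k+1$, it removes one edge $xy\in M$, applies the induction hypothesis to $G\setminus xy$ to obtain a $k$-matching $M'$ there, and then repairs the overlap of $\{x,y\}$ with $V(M')$ by a Kempe-chain alternating-path swap inside a proper $(\Delta(G)-1)$-edge colouring of $G\setminus(M'\cup\{xy\})$.

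Your approach is structural rather than inductive. You first translate ``mitigating'' into ``covering $V(H)$'' via K\H{o}nig's theorem, then prove the Gallai-type identity $\es(G)=|V(H)|-\alpha'(H)$ for $H=Core(G)$, and finally build a matching of exactly this size saturating $V(H)$ by starting from one that saturates $V(H)$ and shrinking it with augmenting paths in $H$. This yields, as a byproduct, the explicit formula $\es(G)=|V(Core(G))|-\alpha'(Core(G))$ for bipartite $G$, which the paper does not state. The trade-off is that you import Mendelsohn--Dulmage, whereas the paper's proof is entirely self-contained with tools already set up in the paper. In fact you can avoid that import: by Theorem~\ref{thm:konig} the graph $G$ has a proper $\Delta(G)$-edge colouring, and any single colour class is a matching meeting every vertex of degree $\Delta(G)$, hence saturating $V(H)$; you may take this as your initial $M_1$ and proceed directly to the compression step.

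Your compression step is correct as written: the endpoints $u,v$ of an $(M_1\cap E(H))$-augmenting path in $H$ are matched in $M_1$ to vertices outside $V(H)$ (since $H$ is induced), so augmenting along $P$ and deleting those two outgoing edges keeps a matching, still saturates $V(H)$, decreases $|M_1|$ by one, and increases $|M_1\cap E(H)|$ by one; iterating until $M_1\cap E(H)$ is maximum and then discarding edges disjoint from $V(H)$ lands exactly on size $|V(H)|-\alpha'(H)$.
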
  
\begin{proof}
We prove by induction on $k$. 
For $k=1$, the assertion is trivial. Assume that the result holds for $k$ and $G$ is a bipartite graph with $\es(G)=k+1$ and $M$ is a mitigating set such that $|M|=k+1$. We choose an arbitrary edge $xy \in M$. By assumption, there exists a $k$-matching mitigating set $M'$ of $G \setminus xy$. If $\{x,y\} \cap V(M')= \varnothing$, then $M' \cup \{xy\} $ is a $(k+1)$-matching mitigating set of $G$, as desired.
Otherwise, let $M''=M' \cup \{xy\} $ and by Theorem~\ref{thm:konig} (K\H onig's theorem), there is a proper $(\Delta(G)-1)$-edge coloring $c$ of $G \setminus M'' $. Now, we consider two cases:\\

\noindent {\bf{Case 1.}} $\{x,y\} \cap V(M')=\{x\}$. Assume that $xz \in M'$. Suppose $x$ is in the partite set $A$,
and $y$, $z$ are in the partite set $B$. It is clear that there exists a color $c_{1}$ such that $c_{1} \notin c(x)$.
Let $P$ be the longest alternating path in $G$ starting at $x$, traverse $xy$ and whose edges are consecutively contained in $M''$ and $C_{1}$ ($C_{1}$ is the related color class to $c_{1}$).
Assume that the last vertex of $P$ is $u$. If $u \in B$, then the set of all edges of $P$ in $C_{1}$ union $M'' \setminus E(P)$, forms a mitigating set for $G$ of size $k$, a contradiction. To see this, color all edges of $E(P) \cap M''$ by $c_{1}$ and for each edge of $E(G) \setminus (M'' \cup E(P))$ keep its color in $c$. Now, if $u \in A$, then apply the same method to obtain a mitigating set, which is a $(k+1)$-matching of $G$.\\

\noindent {\bf{Case 2.}} $\{x,y\} \cap V(M')=\{x,y\}$. Since $G \setminus M''$ has a proper $(\Delta (G) - 1)$-edge coloring, we have $\Delta (G \setminus M'') \leq \Delta (G) - 1$. Note that $d_{G \setminus M''}(x) \leq \Delta (G) - 2$ and $d_{G \setminus M''}(y) \leq \Delta (G) - 2$. So $\Delta (G \setminus M') \leq \Delta (G) - 1$. Thus by Theorem~\ref{thm:konig}, $M'$ is a mitigating set of $G$ of size $k$, a contradiction.
 
\end{proof}

\section{Concluding remarks}

The main purpose of this paper is to introduce the new invariant measuring the effect of edge removal on proper edge colorings, which we call the chromatic edge stability index.  In doing so, we encounter a natural problem, which we posed as Conjecture~\ref{conj1}, and which claims that there exists a smallest edge set $F$, $F\subset E(G)$, whose removal results in a graph with smaller chromatic index than $G$, such that $F$ is a matching. Several results of this paper confirm the truth of the conjecture in some classes of graphs. Lemma~\ref{l:singelton} and Theorem~\ref{th:es2} imply that Conjecture~\ref{conj1} holds for graphs $G$ with $\es(G) \in \{1,2\}$. Since each color class of a proper $\chi'(G)$-edge coloring of a graph $G$ with $\es(G) = \lfloor \frac{n}{2} \rfloor$ has exactly $\lfloor \frac{n}{2} \rfloor$ edges, Conjecture~\ref{conj1} holds also for a graph $G$ with $\es(G)=\lfloor \frac{n}{2} \rfloor$, and Theorem~\ref{cor:n/2-1} confirms the conjecture when $\es(G)=\lfloor \frac{n}{2} \rfloor-1$. Thus the remaining problem is to resolve the conjecture for a (non-bipartite) graph $G$ of order $n$ with $\es(G)\in \{3,\ldots,\lfloor \frac{n}{2} \rfloor-2\}$.

In addition, we obtained characterizations of graphs $G$ with a fixed value of $\es(G)$ for some specific extremal and near-extremal values. Since not all of these characterizations are complete, we propose several problems, which would fill the missing parts of the descriptions. 

\begin{problem}
\label{pr1}
Characterize the class of connected (regular) graphs $G$ with $\es(G)=2$. 
\end{problem}

Since we did not obtain a structural characterization of the connected graphs with the chromatic edge stability index $1$, we can probably not expect a complete structural characterization of Problem~\ref{pr1}, which seems even harder. However, if we restrict to regular graphs, it might be possible to find a nice description of such graphs, similarly as for regular graphs with $\es(G)=1$ in Theorem~\ref{thm:reg1}.

The following problem arises from Section~\ref{sec:5}, addressing the Class $1$ graphs $G$ with $\es(G)=\lfloor \frac{n}{2} \rfloor-1$ for which no description was found.  

\begin{problem}
Characterize the connected Class $1$ graphs $G$ of order $n$ with $\es(G)=\lfloor \frac{n}{2} \rfloor-1$.
\end{problem}

\section*{Acknowledgments}
The research of the first author was supported by grant number (G981202) from the Sharif University of Technology. The financial support from the Slovenian Research Agency (research core funding P1-0297, and projects J1-9109, J1-1693 and J1-2452) is acknowledged by B.B. and T.D.  The sixth author acknowledges the financial support from Yazd University research affairs as Post-doc research project. The authors would like to thank the referees for their useful comments and suggestions. The authors would like to thank Mohamad Javad Sajady for his fruitful comments.

\end{document}